\newtheorem{theorem}{Theorem} [section]
\newtheorem{lemma}[theorem]{Lemma}
\newtheorem{proposition}[theorem]{Proposition}
\newtheorem{corollary}[theorem]{Corollary}
\newtheorem{example}[theorem]{Example}
\newtheorem{remark}[theorem]{Remark}
\newcommand{\mc}[0]{\mathcal}
\newcommand{\Length}{\textsc{L}}
\newcommand{\ab}{\textsf{ab}}
\theoremstyle{definition}
\newtheorem{definition}[theorem]{Definition}
\newcommand{\w}{\omega}
\newcommand\R{{\mathbb R}}
\newcommand\N{{\mathbb N}}
\newcommand\Z{{\mathbb Z}}
\DeclareMathOperator{\Aut}{Aut}
\DeclareMathOperator{\im}{im}
\newcommand{\vv}{\mathbf{v}}
\newcommand{\ww}{\mathbf{w}}
\renewcommand{\tt}{\mathbf{t}}
\newcommand{\uu}{\mathbf{u}}
\renewcommand{\aa}{\mathbf{a}}
\newcommand{\bb}{\mathbf{b}}
\newcommand{\cc}{\mathbf{c}}
\newcommand{\dd}{\mathbf{d}}
\date{\today}
\title{Effective equation solving, constraints and growth in virtually abelian groups}
\author{Laura Ciobanu, Alex Evetts, Alex Levine}
\address{Maxwell Institute of Mathematical Sciences, and Department of Mathematics, Heriot-Watt University,
Edinburgh EH14 4AS, UK}
\email{l.ciobanu@hw.ac.uk}
\address{Department of Mathematics, University of Manchester M13 9PL, UK and the Heilbronn Institute for Mathematical Research, Bristol, UK}
\email{alex.evetts@manchester.ac.uk}
\address{Department of Mathematics, University of Manchester M13 9PL, UK and the Heilbronn Institute for Mathematical Research, Bristol, UK}
\email{alex.levine@manchester.ac.uk}
\keywords{virtually abelian groups, equations in groups, context-free language, rational set, semilinear set,
growth of groups}
\subjclass[2020]{03D05, 20F10, 20F65, 68Q45}
\begin{document}

\begin{abstract}
In this paper we study the satisfiability and solutions of group equations when combinatorial, algebraic and language-theoretic constraints are imposed on the solutions. We show that the solutions to equations with length, lexicographic order, abelianisation or context-free constraints added, can be effectively produced in finitely generated virtually abelian groups. Crucially, we translate each of the constraints above into a rational set in an effective way, and so reduce each problem to solving equations with rational constraints, which is decidable and well understood in virtually abelian groups.

 A byproduct of our results is that the growth series of a virtually abelian group, with respect to any generating set and any weight, is effectively computable. This series is known to be rational by a result of Benson, but his proof is non-constructive.
\end{abstract}

\maketitle

\section{Introduction}

One of the most famous and longstanding open questions in theoretical computer science is whether one can solve equations with length constraints in free monoids.  The elucidation of this problem has deep implications: if undecidable, it would offer a new solution to Hilbert's 10th problem about the satisfiability of polynomial equations with integer coefficients. The resolution of this problem also has important applications in the context of string solvers (SMT) for software engineering and security analysis \cite{Amadini}. Solving equations with length and other related constraints in any structures `close' to free monoids, in the realm of monoids or groups, will shed light on this outstanding open problem. In this paper we bring the first positive results in the area, the decidability of solving equations with length constraints in virtually abelian groups, as well as further types of constraints. 

Our paper combines group theory, theoretical computer science and combinatorics to study group equations, where constraints are imposed on the solutions (see Sections \ref{sec:DP}, \ref{DP:constraints}). There is a successful line of research that considers rational constraints (\cite{Dahmani, dahmani_guirardel, eqns_hyp_grps, Levine}) when solving equations in groups, that is, when the solutions are required to belong to specified rational sets (Definition \ref{def:rat_reg_con}) but little is known for non-rational constraints. We consider this latter direction in the paper in the context of virtually abelian groups. We study constraints that in general groups are non-rational (such as context-free, length or abelianisation constraints, see Definitions \ref{def:ratCF}, \ref{def:nonrat}), that turn out, somewhat surprisingly, to be rational in virtually abelian groups. Since solving equations with rational constraints in virtually abelian groups is decidable (\cite{Levine, Dahmani}), all types of equation solving problems in this paper are decidable. However, this alone is not enough to give decidability; the explicit construction of rational sets starting from a priori non-rational specifications is also necessary. We provide such algorithms that produce rational sets from the given constraints. We do not, however, claim any complexity results or practical efficiency of our algorithms.

The paper is inspired by work in theoretical computer science, where equations with (non-rational) language theoretic, length and abelianisation constraints in free monoids have been studied intensively for decades (\cite{Abdulla, RichardBuchi1988, DayManeaWE, GarretaGray}). As already mentioned, deciding algorithmically whether a free monoid equation has solutions satisfying linear length constraints is a major open question, with both theoretical implications (if undecidable, it would offer a new solution to Hilbert's 10th problem about the satisfiability of polynomial equations with integer coefficients) and practical ones, in the context of string solvers for security analysis \cite{Amadini}. 
 For much of the last century, SAT solvers were the canonical tools to deal with the golden standard of NP-complete problems. In an SAT solver the computer program looks for a solution (i.e. values of $0$ and $1$ to substitute in for the variables) to a Boolean logic formula. Moving a step higher, both in complexity and applicability, are the SMT solvers, which have boomed over the last 10 years. There, one is given a first-order formula in a wider context, such as real numbers, integers, data structures, and most relevant for our paper, strings over finite alphabets. The question is then whether there is a solution for a first-order formula that is not just in the realm of Boolean formulas, but significantly more complex: the name `satisfiability modulo theories' (SMT) represents the fact that the solvers work `modulo' a certain theory (in a universe more complex than the Boolean one) in first-order logic with equality. In algebraic terms, this is essentially equivalent to solving equations in free monoids and related structures (as many of the constraints lead naturally to commutativity or other relations between the alphabet letters), since every string can be seen as an element over the generators of a finitely generated monoid or group.

SMT solvers have a dramatic impact on software engineering and security of Web applications: the widespread interest in cybersecurity has given strong impulse to string constraint solvers because bad string manipulations can have negative effects for Web applications developed in languages like PHP or JavaScript. The performance improvements that constraint solvers have achieved over the last 5 years have been going hand in hand with progress made by computer science theorists on tackling decades-old open questions relating to complexity and decidability of word equations in free monoids. However, many of the theoretical questions, which are still open and highly relevant to understanding the boundaries of what can be done in the solvers, need an influx of ideas from algebra and mathematics more generally: solving equations in groups while keeping the constraints that have been at the core of applications can inform the monoid equations results and feed into the circle of theory and applications complementing each other. 

In the context of non-abelian free groups and free monoids (as well as hyperbolic, right-angled Artin groups etc.), length, abelianisation and context-free constraints are not rational, and the results tend to be negative: our results for virtually abelian groups contrast those of the first author and Garreta. They showed in \cite{CiobanuGarreta} that the Diophantine Problem, that is, the question of satisfiability of equations in a group (see Section \ref{sec:DP}), with abelianisation constraints (called `abelian predicates' in that paper) for non-abelian right-angled Artin groups, or for hyperbolic groups with abelianisation rank $\geq 2$, is undecidable.

It is fairly immediate to see that in the case of abelian groups, imposing abelian or length constraints is equivalent to adding additional equations to the original system, so this is simply an instance of the Diophantine Problem in abelian groups. Imposing lexicographic order constraints on solutions to abelian group equations amounts to checking for membership in easily computable sets. However, moving from abelian to virtually abelian, to establish the rationality of the language-theoretic or algebraic constraints, and especially the explicit description of those constraints as rational sets, becomes a lot more involved. Our main result is as follows.

\begin{theorem}\label{thm:main}
	In any finitely generated virtually abelian group, it is effectively decidable whether a finite system of equations with the following kinds of constraints has solutions:
	\begin{itemize}
	\item[(i)] linear length constraints (with respect to any weighted word metric),
	\item[(ii)] abelianisation constraints,
	\item[(iii)] context-free constraints,
	\item[(iv)] lexicographic order constraints.
\end{itemize}
	\end{theorem}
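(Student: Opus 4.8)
The plan is to reduce every item to the single decidable problem of solving equations with rational constraints in the group $G$, which holds by \cite{Levine, Dahmani}. Thus the entire content of the theorem lies in producing, for each of the four constraint types and in an effective way, a rational subset of $G$ that captures exactly the elements a given variable is allowed to take. I would fix a finite-index normal subgroup $A \cong \Z^n$ of $G$ together with a finite transversal $T$, so that each $g \in G$ acquires unique coordinates $(\aa, t) \in \Z^n \times T$. Throughout I would lean on the characterisation (recalled from the earlier machinery) that a subset $S \subseteq G$ is rational precisely when, for every $t \in T$, the slice $\{\aa \in \Z^n : \aa t \in S\}$ is semilinear, and that this correspondence is effective. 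Each constraint is then discharged by exhibiting an explicit semilinear description of its coordinate slices.

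For the combinatorial and algebraic constraints this amounts to unwinding definitions into linear arithmetic over $\Z^n$. Abelianisation constraints pull back, under the homomorphism $G \to \Ab(G)$, a semilinear (hence rational) subset of the finitely generated abelian group $\Ab(G)$; since homomorphic preimages of rational sets are rational and the map is computable on coordinates, the resulting set is rational and effectively so. Lexicographic-order constraints compare elements via their normal forms, and a comparison $g \preceq h$ cuts out a set defined by finitely many linear inequalities and congruences on the coordinates, which is semilinear. The length constraints are the most delicate of these three: here I would invoke the fact that, in the chosen coordinates, the weighted word metric is piecewise linear over a semilinear decomposition of $\Z^n$ (the phenomenon underlying Benson's rationality of growth). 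A linear length constraint then becomes a finite boolean combination of linear inequalities on each piece, again semilinear; assembling over $T$ yields a rational set and, as a byproduct, an effective handle on the weighted growth series.

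The genuinely hard case, and the one I expect to be the main obstacle, is (iii). A context-free constraint is specified by a context-free language $L$ over the generators, and I must show that the set of group elements represented by words of $L$ is rational, even though $L$ itself is typically not rational and context-freeness is not preserved by evaluation in general groups. The key is Parikh's theorem: $L$ has an effectively computable semilinear Parikh image in $\N^{k}$, where $k$ is the size of the generating alphabet. Because $G$ is virtually abelian, the value in $G$ of a word depends, up to the bounded data recorded by its coset in $T$ and finitely many correction terms, only on its letter counts; so I would push the semilinear Parikh image forward through the coordinate-wise (affine) evaluation to obtain a semilinear subset of $\Z^n$ in each coset. Making this rigorous requires controlling how the non-commutativity modulo $A$ feeds into the $\Z^n$-coordinate as a function of the Parikh vector, and verifying that the resulting correction stays within the semilinear class; this bookkeeping is where the real work sits.

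Finally, I would combine the four translations. A system carrying several simultaneous constraints yields, for each constrained variable, an intersection of rational sets, which is again rational and effectively computable. Feeding the system together with these rational constraints into the decision procedure of \cite{Levine, Dahmani} then decides satisfiability, which proves \cref{thm:main}.
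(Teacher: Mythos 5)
Your overall architecture coincides with the paper's: translate each constraint type into an effectively constructible rational set and feed the system into the decision procedure for equations with rational constraints of \cite{Levine, Dahmani}. However, your treatment of (iii) rests on a false claim. You assert that in a virtually abelian group the value of a word depends, up to coset data and corrections, ``only on its letter counts'', the corrections being ``a function of the Parikh vector''. This is not so: in the Klein bottle group $\langle a,b \mid bab^{-1}=a^{-1}\rangle$ the words $ab$ and $ba$ have the same Parikh image and lie in the same coset of $\langle a \rangle \rtimes \langle b^2\rangle$, yet represent different elements --- the correction terms depend on the \emph{order} of the letters, not on their counts. Consequently $\ab(L)$, taken over the generators of $G$, does not determine $\pi(L)$, and no affine pushforward of the Parikh image can yield the constraint set. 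The paper's proof of \cref{thm:CF} circumvents precisely this obstruction: it decomposes $C=\bigcup_t (C\cap At)$, intersects a translate of the language with the regular preimage of $A$, and then applies an explicit rational transduction (\cref{transducerKSS}, following \cite{KSS}) that rewrites words over the generators of $G$ into words over the generators of $A$ representing the same element; only over the abelian subgroup does evaluation factor through the Parikh map, and only then is the effective Parikh theorem (\cref{Parikh}) applied. That transduction is the missing device in your sketch, and it is not mere bookkeeping.

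There is a second substantive gap in (i): you propose to ``invoke'' the piecewise-linear description of the weighted word metric over a semilinear decomposition, citing the phenomenon behind Benson's rationality theorem. But Benson's proof is non-constructive --- this is the entire point of the paper --- so invoking it yields no algorithm and hence no decidability. The paper must rebuild that description effectively: effective intersections and complements of semilinear subsets of $\Z^k$ (\cref{thm:intersectionsemi}, \cref{thm:complementsemi}, generalising Ginsburg--Spanier), and from these an effectively constructible geodesic normal form (\cref{thm:geodesicconstruction}); this is the bulk of the technical work your proposal presupposes. Two smaller repairs are also needed. First, the general principles you lean on --- that homomorphic preimages of rational sets are rational, and that intersections of rational sets are rational --- are false in arbitrary groups (the preimage of the trivial subgroup under $F_2\to\Z^2$ is not rational, and intersections fail by Mihailova-type examples); they hold in virtually abelian groups only through the semilinear characterisation, which itself must be made effective (\cref{cor:semipreimage}, \cref{lem:rationalintersection}). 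Second, for (ii) and (iv) you should explain how constraints relating several variables (an abelianisation equation among the $X_i$, or the relation $X\leq_{\text{lex}}Y$) become per-variable rational constraints; the paper does this by introducing auxiliary variables and equations (\cref{lem:ab-const-form}, and membership of $X$, $Y$ and $X^{-1}Y$ in \cref{thm:lexicographic}). With these repairs, your abelianisation route via preimages of semilinear subsets of $G^{\ab}$ is a legitimate alternative to the paper's explicit computation of a finite generating set for $[G,G]$.
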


	Theorem \ref{thm:main} is a consequence of two facts: (1) each of the above constraints is a rational set (Theorem \ref{prop:main}), and (2) the problem of satisfiability of finite systems of equations with rational constraints is decidable and the solutions can be effectively produced \cite{EvettsLevine, Levine, Dahmani}.

	\begin{theorem}\label{prop:main}
	In any finitely generated virtually abelian group, there is an effective way to construct a rational set from a
	 \begin{itemize}
	\item[(i)] linear length constraint (with respect to any weighted word metric) (Theorem \ref{thm:lengthrational}),
	\item[(ii)] abelianisation constraint (Theorem \ref{thm:abelianisation_cons}),
	\item[(iii)] context-free constraint (Theorem  \ref{thm:CF}),
	\item[(iv)] lexicographic order constraint (Theorem \ref{thm:lexicographic}).
\end{itemize}
	\end{theorem}

Theorem B of \cite{Levine} states that the solution sets to finite systems of equations with rational constraints can be represented by EDT0L languages (with respect to a natural normal form), and that these EDT0L languages can be explicitly constructed. We refer the reader to the survey \cite{CiobanuLevine} for background on EDT0L languages and equations in groups, and do not define or discuss EDT0L languages here, since these are not needed in the rest of the paper. Corollary 6.18 of \cite{CiobanuEvetts} implies that the set of solutions to such a system can also be expressed as an EDT0L language with respect to a geodesic normal form. Thus we obtain the following.

\begin{corollary}
The set of solutions to a finite system of equations with the kinds of constraints listed in Theorem \ref{thm:main} may be expressed as an EDT0L language in (at least) two different ways: with respect to a natural normal form, and with respect to a geodesic normal form.
\end{corollary}

Finally, a consequence of our work is Proposition \ref{prop:growth}.

\begin{proposition}\label{prop:growth}(see Corollary \ref{cor:growth})
	The weighted growth series (with respect to any finite generating set) of a finitely generated virtually abelian group \(G\) can be computed explicitly.
\end{proposition}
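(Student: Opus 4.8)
The plan is to reduce the computation of the weighted growth series to counting lattice points weighted by a piecewise-linear length function, and then to invoke the rationality of the generating functions of semilinear sets. Fix a finite generating set together with a weight function, giving a weighted word length $w \colon G \to \mathbb{Z}_{\ge 0}$, and recall that the (spherical) weighted growth series is $\Sigma(z) = \sum_{g \in G} z^{w(g)} = \sum_{n \ge 0} \sigma(n)\, z^n$, where $\sigma(n) = \#\{ g \in G : w(g) = n \}$. Since $G$ is virtually abelian, I would first fix a finite-index free abelian normal subgroup $A \cong \mathbb{Z}^k$ together with a transversal $t_1, \dots, t_m$, so that every element of $G$ is written uniquely as $t_i v$ with $v \in \mathbb{Z}^k$. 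This decomposes the series as $\Sigma(z) = \sum_{i=1}^m \Sigma_i(z)$, where $\Sigma_i(z) = \sum_{v \in \mathbb{Z}^k} z^{w(t_i v)}$ records the contribution of the $i$-th coset.

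The crucial step is to describe, for each $i$, the graph
\[
\Gamma_i = \{ (v, n) \in \mathbb{Z}^k \times \mathbb{Z}_{\ge 0} : w(t_i v) = n \}
\]
as an effectively computable semilinear subset of $\mathbb{Z}^{k+1}$. This is where I would lean on the effective analysis of the weighted length function underlying \cref{thm:lengthrational}: restricted to a coset, $w(t_i \cdot)$ is eventually governed by a polyhedral gauge on $\mathbb{Z}^k$ up to a bounded periodic correction, so that the epigraph $\{(v,n) : w(t_i v) \le n\}$ is the set of lattice points of a rational polyhedral cone (the cone over the weighted unit ball), and hence semilinear. As semilinear sets are exactly the Presburger-definable sets and so are closed under Boolean operations, taking the difference of this epigraph with its shift in $n$ leaves $\Gamma_i$ semilinear. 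Making this semilinear description explicit, and \emph{uniformly in the length parameter $n$}, is exactly what the length-constraint machinery supplies, and this is what upgrades Benson's non-constructive rationality result to an algorithm.

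Given an effective semilinear description of $\Gamma_i$, the series $\Sigma_i(z)$ is obtained from the multivariate generating function $F_i(x_1, \dots, x_k, y) = \sum_{(v,n) \in \Gamma_i} x^v y^n$ by setting every $\mathbb{Z}^k$-variable to $1$ and letting $y = z$ track the last coordinate; this specialisation is legitimate because each sphere is finite, so the inner sum over $v$ at a fixed $n$ is finite. Generating functions of semilinear sets are rational and can be computed explicitly, so each $\Sigma_i(z)$ is a computable rational function of $z$; summing over the finitely many cosets yields the full weighted growth series as a computable rational function, as claimed. I expect the main obstacle to be the second step: producing the semilinear graph $\Gamma_i$ valid for all $n$ simultaneously rather than for each fixed $n$, since one must control the finitely many polyhedral pieces and the bounded periodic corrections uniformly, and verify that this decomposition can be generated algorithmically from the defining data of $G$ and the weight.
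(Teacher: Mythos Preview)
Your outline is sound and ultimately rests on the same engine as the paper, but the packaging differs and two steps are left looser than they should be.

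\textbf{Comparison with the paper.} The paper does not pass through the graph $\Gamma_i\subset\Z^k\times\N$ nor through a multivariate generating function. Instead, it uses Theorem~\ref{thm:geodesicconstruction} directly: the geodesic normal form $\mathcal U=\bigcup_\pi U^\pi$ gives, for each pattern $\pi$, an effectively constructible semilinear set $\phi_\pi(U^\pi)\subset\N^{m(\pi)}$ on which the weight is the affine function $x\mapsto A_{n+1}^\pi\cdot x+B_{n+1}^\pi$. One then applies Lemma~\ref{lem:linearindependence} to write each piece as a disjoint union of linear sets with linearly independent generators and reads off the growth series as $z^{\w(a)}\prod_{b\in B}(1-z^{\w(b)})^{-1}$. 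Everything stays in $\N^m$ with a single-variable series, so no specialisation is needed.

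\textbf{Where your proposal needs tightening.} First, you invoke Theorem~\ref{thm:lengthrational} to get the semilinearity of $\Gamma_i$, but that theorem is stated for a \emph{fixed} right-hand side $\beta$, not for the full graph with $n$ a free coordinate. What you actually need is Theorem~\ref{thm:geodesicconstruction} itself (the weight is explicitly affine on each semilinear $\phi_\pi(U^\pi)$), from which the semilinearity of $\Gamma_i$ follows by taking images under the affine map $x\mapsto(\mathcal A^\pi(x),\,A_{n+1}^\pi\cdot x+B_{n+1}^\pi)$ and unioning over the patterns landing in the $i$th coset. Your epigraph sketch (``cone over the weighted unit ball up to bounded periodic correction'') is intuition, not a proof; the actual argument is exactly the effective pattern construction the paper carries out in Section~\ref{sec:geodesics}.

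Second, the step ``generating functions of semilinear sets are rational, now set $x=1$'' needs care because $\Gamma_i$ lives in $\Z^k\times\N$, not $\N^{k+1}$. The multivariate series $F_i$ is only well-defined in $\Z[x_1^{\pm1},\dots,x_k^{\pm1}][[y]]$, and the standard rationality statement (Proposition~\ref{prop:semilineargrowth}, Lemma~\ref{lem:linearindependence}) is for subsets of $\N^k$. To make your route rigorous you must first pass through an orthant decomposition (Corollary~\ref{cor:orthant}) to reduce to $\N^{k+1}$, then observe that because each sphere is finite, every generator in the resulting disjoint linear decomposition has \emph{strictly positive} last coordinate, so the factors $(1-x^{v(b)}y^{n(b)})^{-1}$ are invertible in $\Z[x^{\pm1}][[y]]$ and remain nonzero at $x=1$. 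This is all doable, but it is extra work that the paper's single-variable approach in $\N^m$ sidesteps entirely.

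In short: your plan is correct and would go through once these two points are made precise, but it is a detour through $\Z^{k+1}$ and a multivariate specialisation, whereas the paper's pattern decomposition gives the series directly from semilinear subsets of $\N^m$ with an affine weight.
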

The fact that this growth series is a rational function was proved by Benson \cite{Benson}, but his proof does not include an algorithm to calculate the series.


\section{Preliminaries}

\subsection{Describing virtually abelian groups}

It is a standard fact that we may assume that any finitely generated virtually abelian group is a finite extension of a finitely generated free abelian group. Throughout the section we let $G$ be a finitely generated virtually abelian group with free abelian normal subgroup $A$ of finite index, and use the short exact sequence
\begin{equation}\label{ses}
	1\to A\to G\to \Delta\to 1
\end{equation}
for some finite group $\Delta$.

In any decision problem that can be asked about a group, one must first decide
what information about the group is `known'. This is frequently a presentation,
but can take other forms. In our case, we describe $G$
using a free abelian basis $B$ for $A$ and
a (finite, right) transversal, which together define a finite generating set. In
addition to this, we need to know how to multiply generators.

Before we can formally define this information, we must first define a normal
form for a virtually abelian group. We start by recalling the definition of
a normal form.

\begin{definition}
  Let \(G\) be a group and \(\Sigma\) be a generating set. A \textit{normal
  form} \(\mu\) for \(G\) is a function \(\mu \colon G \to \Sigma^\ast\) that maps
  every element \(g\) to a word \(\mu(g)\) that represents \(g\).
\end{definition}
Note that our definition implies that each element has a unique representative. We
will frequently abuse this definition, and refer to a normal form \(\mu\) to mean
the image \(\mu(G)\).

We now define the standard normal form we will be using for virtually abelian
groups.
\begin{definition}\label{def:VAtransv}
  Let \(G\) be a finitely generated virtually abelian group. Fix a finite-index free abelian
  normal subgroup \(A\), a free abelian basis \(B\) for \(A\), and a total
  order \(\leq\) on \(B\); that is, write \(B = \{b_1, b_2, \ldots, b_n\}\). Let
  \(T\) be a finite transversal of \(A\) (we choose a right transversal below, but it is not necessary to adhere to right or left since the subgroup is normal). Note that \(B \cup T\)
  generates \(G\), and every element of \(G\) can be represented uniquely in
  the form
  \[
    (b_1^\ast \cup (b_1^{-1})^\ast) \cdots (b_n^\ast \cup (b_n^{-1})^\ast) T.
  \]
  The \textit{subgroup-tranversal normal form} for \(G\), with respect to \((B,
  \leq, T)\), denoted \(\eta_{B, \leq, T}\) is the function that maps an
  element \(G\) to its unique representative word in the above set. If \(B\),
  \(\leq\) and \(T\) are implicit, we will frequently denote this using
  \(\eta\).
\end{definition}

We are now in a position where we can formally define the information we will
be using to construct these algorithms.

\begin{definition}\label{def:VA}
  Let \(G\) be a finitely generated virtually abelian group. A \textit{virtually
  abelian group description} for \(G\) comprises the following:
  \begin{enumerate}
    \item A (finite) free abelian basis \(B\) for a finite-index free abelian
      normal subgroup \(A\) of \(G\), with a total order associated with it;
    \item A finite transversal \(T\) for \(A\) in \(G\);
    \item A presentation of the finite quotient $\Delta=G/A$;
    \item The function \(f \colon (B \cup T)^{\pm} \times (B \cup T)^{\pm} \to (B^\pm)^*T
      \) that maps a pair \((a_1, a_2)\) to \(\eta(a_1 a_2)\).
      That is, \(f\) `collects' the multiplication of the generators.
  \end{enumerate}
\end{definition}

\subsection{Rational and context-free sets in groups}\label{sec:FL}

Let $\Sigma$ be a finite alphabet and let $S=\Sigma^{\pm 1}$. Suppose $G$ is a group generated by $S$, and let $\pi: S^* \rightarrow G$ be the natural projection from the free monoid $S^*$ generated by $S$ to $G$, taking a word over the generators to the element it represents in the group.

A \emph{language} is any subset of $S^*$ and is called \emph{regular} if it is recognised by a finite state automaton, as is standard (\cite{groups_langs_aut}). A \emph{context-free} language is recognised by a pushdown automaton or context-free grammar. The convention we employ here is that all context-free languages are given by context-free grammars in Chomsky normal forms  (see \cite[Section 2.6.13]{groups_langs_aut}).

 We next define sets of elements in a group that are images or preimages of regular or context-free languages over the generators of the group.

\begin{definition}\label{def:rat_reg_con}
\ \begin{enumerate}
\item[(1)]
A subset $L$ of $G$ is {\em recognisable} if the full preimage
$\pi^{-1}(L)$ is a regular subset of $S^*$.
\item[(2)] A subset $L$ of $G$ is {\em rational} if $L$ is the
image $\pi(R)$  of a regular subset $R$ of $S^*$.
\item[(3)] A subset $L$ of $G$ is {\em recognisably context-free} if
$\pi^{-1}(L)$ is a context-free subset of $S^*$.
\item[(4)] A subset $L$ of $G$ is {\em context-free} if $L$ is the
image $\pi(C)$  of a context-free subset $C \subset S^*$.

\end{enumerate}
\end{definition}
It follows immediately from the definition that recognisable subsets of $G$ are
rational. Similarly, if a set is of type (3) then it is of type (4). The type (4) sets above are sometimes called \emph{algebraic} in the literature (\cite{Berstel, Carvahlo}), but we avoid this terminology because it can be confused with `algebraic sets' in the sense of `solutions sets to equations'.

\begin{definition}\label{def:rateffcon}
	A rational set \(L \subseteq G\) is said to be \emph{effectively constructible} from some input $I$ if there exists an algorithm that, on input of $I$, produces a regular subset \(R \subseteq S^*\) such that \(\pi(R)=L\).
\end{definition}

Note that if we are given rational sets \(L\) and \(L'\) in some group, and
corresponding regular languages \(R\) and \(R'\) over the generators of \(G\),
the concatenation \(LL'\) and the union \(L \cup L'\) are effectively
constructible rational sets. If, moreover, we are given an automorphism
\(f\colon G\to G\), defined by its action on the generators, the image \(f(L)\)
is an effectively constructible rational set \cite{groups_langs_aut}.

\subsection{Semilinear sets}

\begin{definition}
	Let \(G\) be a group or monoid. A subset of \(G\) is \emph{linear} if it has the form \(aB^*=\{ab : b\in B^*\}\) for some \(a\in G\) and finite \(B\subset G\). Any finite union of linear sets is called \emph{semilinear}.
\end{definition}

Semilinear sets are most frequently studied in commutative monoids \(\N^k\) and \(\Z^k\) and there we can write the monoid operation additively, so that a linear set has the form \(a+B^*=\{a+b : b\in B^*\}\). We record here a strengthened  version of a classical result of Parikh that we will need in Section \ref{sec:CF}. Let $\Sigma=\{a_1, \dots a_k\}$ and let $\ab: \Sigma^* \mapsto \mathbb{N}^{|\Sigma|}$ be the \emph{abelianisation} or \emph{Parikh map} for free monoids:
\[\ab: \Sigma^* \mapsto \N^{|\Sigma|}\] that records the occurrences of each letter $a_i$ in a word. That is, $\ab(w)=(|w|_{a_1}, \dots, |w|_{a_k})$, where $|w|_{a_i}$ denotes the number of occurrences of letter $a_i$ in $w$.

\begin{theorem}[\cite{ParikhConstructive}, Theorem 1.1]\label{Parikh}
Let $L$ be a context-free language over $\Sigma$. Then $\ab(L)$ is a semilinear subset of $\N^{|\Sigma|}  $, and there exists a regular language $R$ over $\Sigma$ such that $\ab(L)=\ab(R)$.

Moreover, given a context-free grammar $G$ with $n$ variables, one can effectively construct a finite state automaton $M$ with $O(4^n)$ states such that the languages $L(G)$ and $L(M)$ have the same Parikh image, that is, $\ab(L(G))=\ab(L(M))$.
\end{theorem}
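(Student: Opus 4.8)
The plan is to prove the classical assertion via Parikh's derivation-tree argument and then upgrade it to the effective automaton construction with the stated size bound. Throughout, fix a context-free grammar for $L$ in Chomsky normal form, with variable set $V$, $|V|=n$, start symbol $S$, and terminal alphabet $\Sigma$. Since the grammar is in Chomsky normal form, every internal node of a derivation tree has either two children labelled by variables or a single child labelled by a terminal, so the Parikh vector $\ab(w)$ of the yield $w$ is determined by the multiset of productions applied. Call a derivation tree \emph{reduced} if no variable labels two distinct nodes on a common root-to-leaf path. As there are only $n$ variables, a reduced tree has height at most $n$; hence there are only finitely many reduced trees, and their yields have bounded length.

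Next comes the pumping decomposition. If a variable $X$ labels two nodes $u,v$ on a common root-to-leaf path with $v$ a proper descendant of $u$, then the portion of the tree below $u$ with the subtree below $v$ excised forms a \emph{loop} that may be removed or re-inserted arbitrarily often, contributing a fixed vector $p\in\N^{|\Sigma|}$ to the Parikh image. Iterating this, every $\ab(w)$ with $w\in L$ can be written as $\ab(w_0)+\sum_i \lambda_i p_i$, where $w_0$ is the yield of a reduced tree, the $p_i$ range over the finitely many loop vectors compatible with that reduced skeleton, and $\lambda_i\in\N$; conversely each such vector is realised. This exhibits $\ab(L)$ as a finite union of linear sets $\ab(w_0)+\{p_i\}^\ast$, proving semilinearity, and a regular language enumerating, for each skeleton, arbitrarily many copies of its compatible loop words gives a regular $R$ with $\ab(R)=\ab(L)$.

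For the effective bound I would convert the above into a direct NFA, rather than pass through the a priori large explicit list of reduced trees. The automaton guesses a derivation and outputs its terminal yield while maintaining in its state only a pair of subsets of $V$: one recording the variables reachable from $S$ in the skeleton currently being built, and one recording the variables that remain available for pumping. Since each component is a subset of $V$, the state space has size at most $2^{|V|}\cdot 2^{|V|}=4^{n}$, and correctness follows from the decomposition above, reading the loop contributions in an arbitrary order being harmless since only the Parikh image is constrained.

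The main obstacle is the compatibility bookkeeping in the last two steps: a loop vector $p_i$ associated with a variable $X$ may be added to a base vector $\ab(w_0)$ only when $X$ genuinely occurs in some derivation realising that skeleton, so the set of available loops is constrained by the skeleton. Showing that this constraint is captured \emph{exactly} by the two subset components --- that the set of pumpable variables is a function of which variables are reachable from $S$ and which are co-reachable to a terminal yield --- is the delicate point, and it is precisely this ``pair of subsets'' invariant that both forces and achieves the $O(4^{n})$ bound.
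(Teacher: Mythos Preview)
The paper does not prove this statement; it is quoted verbatim from the cited reference and used as a black box in Section~\ref{sec:CF}. So there is no in-paper proof to compare against, and your proposal should be read as an attempt to reproduce the cited result.

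Your outline of the classical semilinearity argument (reduced derivation trees plus pump loops) is the standard one and is correct at the level of a sketch; the existence of some regular $R$ with $\ab(R)=\ab(L)$ follows.

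The $O(4^n)$ automaton is another matter. You correctly anticipate that the state space should consist of pairs of subsets of $V$, which gives the $4^n$ count, but you then explicitly defer the heart of the argument: you write that showing the compatibility constraint ``is captured \emph{exactly} by the two subset components \ldots\ is the delicate point''. That delicate point is the entire content of the effective bound, so as written the proposal is incomplete rather than merely informal. Moreover, your description of what the two components record (``variables reachable from $S$ in the skeleton'' and ``variables that remain available for pumping'') is not quite the right invariant: every useful variable is reachable from $S$, so the first component as stated carries no information. In the construction you are aiming for, one component is the \emph{set} of nonterminals currently pending expansion (the frontier, kept to a set rather than a multiset by a pumping argument), and the other is the set of nonterminals already encountered along the derivation; the nontrivial lemma is that restricting to derivations whose pending frontier never contains a repeated nonterminal does not change the Parikh image. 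Your sketch gestures at this but does not supply it, and without it neither soundness nor completeness of the automaton follows.
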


To prove the decidability results of Section \ref{sec:length}, we are interested in showing that certain semilinear sets can be algorithmically calculated. To make this precise, we make the following definition. In our case, the specified input will always be a virtually abelian group description along
with some form of constraint.
\begin{definition}\label{def:semieffcon}
	Let \(X\subset\N^k\) (respectively \(X\subset\Z^k\)), be a subset which we know to be semilinear. We say that \(X\) is \emph{effectively constructible} if, given a specified input, we can find elements \(a_1,\ldots,a_d\) and finite subsets \(B_1,\ldots,B_d\) of \(\N^k\) (respectively \(\Z^k\)) so that \(X=\bigcup_{i=1}^d \left(a_i+B_i^*\right)\). 
\end{definition}

\subsection{The Diophantine problem ($\mc{DP}$) in groups}\label{sec:DP}

Let $\mathbf{x}=\{X_1, \dots, X_m\}$ be a set of variables, where $m\geq 1$.
For a group $G$, a \textit{finite system of equations} in $G$ over the variables $\mathbf{x}$ is a
		finite subset $\mathcal{E}$ of the free product $G \ast F(\mathcal{\mathbf{x}})$, where
		\(F(\mathbf{x})\) is the free group on \(\mathbf{x}\). If
		\(\mathcal{E} = \{w_1, \ \ldots, \ w_n\}\), then a \textit{solution} to the
		system \(w_1 = \cdots = w_n = 1\) is a homomorphism \(\phi \colon G \ast F(\mathbf{x}) \to
		G\), such that \(\phi(w_1) = \cdots =\phi (w_n) = 1_G\) and
		 \(\phi(g) = g\) for all \(g \in G\). If $\mathcal{E}$ has a solution, then it is \emph{satisfiable}.

	\begin{example} Consider the system $\mathcal{E}=\{w_1, w_2\}\subset F(a,b) \ast F(X_1, X_2)$ over the free group $F(a,b)$, where $w_1=X_1^2(abab)^{-1}$, $w_2=X_2X_1X_2^{-1}X_1^{-1}$; we set $w_1=w_2=1$, which can be written as $X_1^2=abab, X_2X_1=X_2X_1$. The solutions are $\phi(X_1)=ab, \phi(X_2)=(ab)^k$, $k \in \Z$.
	\end{example}

For a group $G$, we say that systems of equations over $G$
are \emph{decidable} over $G$ if there is an algorithm to determine whether any
given system is satisfiable.  The question of decidability of (systems of) equations is called the
{\em Diophantine Problem} for $G$, and denoted $\mc{DP}(G)$.

\subsection{The Diophantine Problem with constraints}\label{DP:constraints}

If we ask not only whether a system of equations has solutions, but whether it has solutions that belong to certain specified sets, then we call those sets \emph{constraints} and consider the Diophantine Problem with various kinds of constraints. We start first with language-theoretic constraints:

\begin{definition}\label{def:ratCF}
Let $\mathcal E$ be a system of equations on variables $\mathbf{x}=\{X_1, \dots, X_k\}$ in a group $G$.
The \emph{Diophantine Problem with rational or recognisable or context-free constraints} (of type (3) or (4)) asks about the existence of solutions to $\mathcal E$, with some of the variables restricted to taking values in specified rational, recognisable or context-free sets, respectively. 
\end{definition}

We next attach three types of constraints that have an algebraic or combinatorial nature, rather than a language theoretic one, to the Diophantine Problem. These constraints are typically not rational in arbitrary groups, although will turn out to be rational in virtually abelian groups.

Recall that $G$ is finitely generated by $S$ and so every element $g \in G$ has a length $|g|_S$, which is the length of a shortest word $w$ representing $g$ in $G$.
The length of a solution to $\mathcal E$ with respect to $S$ refers to the length(s) of the group element(s) corresponding to the solution.

 For any group $G$, let $\ab:G\to G^{\ab}$ be the natural abelianisation map to $G^{\ab}= G/G'$, that is, the quotient of $G$ by its commutator subgroup.

 Finally, we may fix an ordering on the generating set of $G$ and consider the lexicographic or shortlex order for the group based on this. Or more specifically, we may fix an ordering on the free abelian basis of \(A\), and fix an ordering on
   \(T\) to obtain a lexicographic ordering \(\leq_\text{lex}\) on the normal
   form words. By ordering the normal form words using shortlex instead,
   we obtain another ordering \(\leq_\text{shortlex}\) (see Definition \ref{def:orders}).

\begin{definition}\label{def:nonrat}
\ \begin{enumerate}
\item 
We let $\mc{DP}(G, \Length$) denote the $\mc{DP}$ with \emph{linear length constraints}. A set of linear length constraints is a system $\Theta$ of linear integer equations and inequalities where the unknowns correspond to the lengths of solutions (with respect to $|.|_S$) to each variable $X_i \in \mathbf{x}$ (see also Definition \ref{def:length_constraint}). Then $\mc{DP}(G, \Length)$ asks whether solutions to $\mathcal E$ exist for which the lengths satisfy the system $\Theta$.

%
%

\item We write $\mc{DP}(G, \ab)$ for the $\mc{DP}$ where an abelian predicate is added, or equivalently, \emph{abelianisation constraints} are imposed. A set of abelianisation constraints is a system $\Theta$ of equations in the group $G^{\ab}$, and $\mc{DP}(G, \ab)$ asks whether a solution to $\mathcal E$ exists such that the abelianisation of the solution satisfies the system $\Theta$ in $G^{\ab}$.

\item We write $\mc{DP}(G, <)$ for the $\mc{DP}$ where an order predicate is added. A set $\mc{O}$ of order constraints consists of several order relations imposed on the solutions, and  $\mc{DP}(G, <)$ asks whether a solution to $\mathcal E$ exists that satisfies the constraints in $\mc{O}$.


\end{enumerate}
\end{definition}

\begin{example} Consider the virtually abelian group $G=\langle a, b : bab^{-1}=a^{-1}\rangle$ (this is the Klein bottle group and is virtually $\Z^2$), which has abelianisation $\Z\times \Z/{2\Z}$ and length function $|.|=|.|_{\{a,b\}}$. We study the equation $XaY^2bY^{-1}=1$ over variables $X,Y$ in $G$.
\begin{enumerate}
\item An instance of $\mc{DP}(G, \Length)$ is: decide whether there are any solutions $(x,y)$ such that $|x|=|y|+2$; the answer is yes, since $(x,y)=(b^{-1}a^{-1}, 1)$ is a solution with $|x|=|y|+2$.
\item An instance of $\mc{DP}(G, \ab)$ is: decide whether there are any solutions $(x,y)$ such that $\ab(x)=\ab(y)$ (we use additive notation for $\Z\times (\Z/{2\Z})$); the answer is no, since $\ab(xay^2by^{-1})=\ab(x)+\ab(y)+(1,\bar{1})=(0,\bar{0})$ together with $\ab(x)=\ab(y)$ lead to $2\ab(y)=(-1,\bar{1})$, which is not possible in $\Z\times \Z/{2\Z}$.
\item Fix the order $a<a^{-1}<b<b^{-1}$ and let \(\leq_\text{lex}\) represent the induced lexicographic order on all elements of $G$, i.e. pairs of elements $(g,h) \in G \times G$ such that $g\leq_\text{lex}h$ if $g$ is lexicographically smaller than $h$.

An instance of $\mc{DP}(G, \leq_\text{lex})$ is: decide whether there are any solutions $(x,y)$ such that $x\leq_\text{lex}y$; the answer is `yes' since $x=a^{-1}, y=b^{-1}$ is a solution with $x\leq_\text{lex}y$.
\end{enumerate}
\end{example}

%

\section{Semilinear sets: effective construction in free abelian groups}\label{sec:semilinear}

In this section we generalise results of Eilenberg and Sch\"utzenberger \cite{RationalSets}, and Ginsburg and Spanier \cite{GinsburgSpanier}, to show that we can effectively calculate expressions for intersections and complements of semilinear subsets of free abelian groups.
In this section we deal exclusively with semilinear subsets of the commutative monoids \(\N^k\) and \(\Z^k\) and therefore write the monoid operation additively, so that a linear set has the form \(a+B^*=\{a+b : b\in B^*\}\).

We are interested in effectively constructing certain semilinear sets (recall Definition \ref{def:semieffcon}).
\begin{remark}\label{rem:semilinearunion}
	By definition, if we are given semilinear expressions for sets \(X\) and \(Y\) in any group or monoid, then \(X\cup Y\) is a semilinear set, and we can effectively construct a semilinear expression for it: simply the union of the constituent linear sets of \(X\) and \(Y\).
\end{remark}
The analogous statements for intersection and complement require proof. In Theorem III of \cite{RationalSets}, Eilenberg and Sch\"utzenberger prove that intersections and complements of semilinear subsets of any commutative monoid are semilinear, however their proof is not constructive. On the other hand, Ginsburg and Spanier \cite{GinsburgSpanier} provide a constructive proof of the same facts for semilinear subsets of \(\N^k\). In this section we show that their results can be generalised to show that we can effectively construct expressions for intersections and complements of semilinear subsets of free abelian groups. We start with the definition of an integer affine map.
\begin{definition}
	An \emph{integer affine map} \(\phi\colon\Z^k\to\Z^l\) is a function given by an \(l\times k\) matrix \(M\) and a constant \(c\in\Z^l\), so that \(\phi(x)=Mx+c\) for all \(x\in\Z^k\). Note that this is a group homomorphism composed with a translation.
\end{definition}
\begin{lemma}\label{lem:semilinearimage}
	Let \(X\subset\Z^k\) be semilinear, and \(\phi\colon\Z^k\to\Z^l\) be an integer affine map. Then the image \(\phi(X)\) is an effectively constructible semilinear subset of \(\Z^l\).
\end{lemma}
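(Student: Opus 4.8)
The plan is to reduce the problem to a single linear set and then observe that an affine map sends linear sets to linear sets by a direct computation. First, since \(X\) is semilinear, write \(X=\bigcup_{i=1}^d (a_i+B_i^*)\) with the \(a_i\in\Z^k\) and the finite \(B_i\subset\Z^k\) given explicitly. Because images commute with unions, \(\phi(X)=\bigcup_{i=1}^d \phi(a_i+B_i^*)\), and by Remark \ref{rem:semilinearunion} it suffices to produce an effective linear expression for the image of each constituent linear set; the union of these expressions is then an effectively constructible semilinear set.

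Next I would fix a single linear set \(a+B^*\) with \(B=\{b_1,\ldots,b_r\}\) and write \(\phi(x)=Mx+c\). Every element of \(a+B^*\) has the form \(a+\sum_{j=1}^r n_j b_j\) with \(n_j\in\N\), so
\[
\phi\Bigl(a+\sum_{j=1}^r n_j b_j\Bigr)=Ma+c+\sum_{j=1}^r n_j (M b_j).
\]
As the \(n_j\) range over \(\N\), this describes exactly the linear set \((Ma+c)+(MB)^*\), where \(MB=\{Mb_1,\ldots,Mb_r\}\subset\Z^l\). Hence \(\phi(a+B^*)=(Ma+c)+(MB)^*\), which is again linear; the key point is simply that the period vectors transform linearly under \(M\) while the base point transforms affinely.

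Finally I would check that this expression is effectively constructible: the base point \(Ma+c\) and each period \(Mb_j\) are obtained from the given data \(M\), \(c\), \(a\), \(B\) by a single matrix multiplication, so we can write them down explicitly. Any coincidences or vanishing among the \(Mb_j\) cause no difficulty, since \(MB\) is still a finite subset of \(\Z^l\) generating the same submonoid. There is no genuine obstacle here---the content is only that the affine structure of \(\phi\) interacts compatibly with the linear-set structure---but this lemma is the base case on which the more delicate effective constructions of intersections and complements later in the section will rest.
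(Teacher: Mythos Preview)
Your proof is correct and follows essentially the same approach as the paper: reduce to a single linear set via Remark~\ref{rem:semilinearunion}, then observe that \(\phi(a+B^*)=\phi(a)+\{Mb:b\in B\}^*\) is linear with explicitly computable base point and periods. The paper's version is terser but the content is identical.
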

\begin{proof}
	By Remark \ref{rem:semilinearunion}, we may assume that \(X\) is linear, say \(X=a+B^*\). We have \(\phi\colon x\mapsto Mx+c\) for some \(\l\times k\) integer matrix \(M\) and constant \(c\in\Z^l\). The image of \(X\) is then
	\begin{align*}
		\phi(X)=\{\phi(a+b) : b\in B^*\} = \phi(a)+\{Mb : b\in B^*\} = \phi(a)+\{Mb : b\in B\}^*,
	\end{align*}
	which is a linear set.
\end{proof}

We prove the following generalisation of Theorem 6.1 of \cite{GinsburgSpanier}.
\begin{theorem}\label{thm:intersectionsemi}
	Let \(X\) and \(Y\) be semilinear subsets of \(\Z^k\). Then \(X\cap Y\) is semilinear, and effectively constructible from \(X\) and \(Y\).
\end{theorem}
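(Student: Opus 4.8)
The plan is to reduce the problem about $\Z^k$ to the positive-orthant case $\N^k$, where the constructive intersection result of Ginsburg and Spanier (Theorem 6.1 of \cite{GinsburgSpanier}) is already available, and then transfer the answer back. The obstacle to applying the $\N^k$ result directly is that semilinear subsets of $\Z^k$ may involve negative coordinates in both their base points and their period vectors, so they are not literally subsets of $\N^k$.

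First I would embed $\Z^k$ into $\N^{2k}$ by the standard device that separates each coordinate into a positive and a negative part. Concretely, let $\psi\colon\Z^k\to\N^{2k}$ be built from the map sending the $i$-th standard basis vector $e_i$ to $(e_i,0)$ and $-e_i$ to $(0,e_i)$; although $\psi$ is not itself well-defined as a function on $\Z^k$ (an integer has many representations as a difference of naturals), what I really want is an affine-type correspondence that turns a linear set $a+B^*\subset\Z^k$ into a semilinear subset of $\N^{2k}$. The clean way to do this is to write each generator $b\in B$ as $b^+-b^-$ with $b^+,b^-\in\N^k$, and each base point likewise, and to record the lift $\wt X\subset\N^{2k}$ together with the linear projection $\rho\colon\Z^{2k}\to\Z^k$, $(\uu,\vv)\mapsto\uu-\vv$, which satisfies $\rho(\wt X)=X$. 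I would make precise that from a semilinear expression for $X$ one effectively obtains a semilinear expression for a lift $\wt X\subset\N^{2k}$ with $\rho(\wt X)=X$, and similarly $\wt Y$ with $\rho(\wt Y)=Y$.

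The difficulty is that $\rho(\wt X)\cap\rho(\wt Y)$ is generally larger than $\rho(\wt X\cap\wt Y)$, so intersecting the lifts does \emph{not} compute the intersection downstairs. To fix this I would not intersect the lifts directly but instead intersect $\wt X$ with the full preimage $\rho^{-1}(Y)$ inside $\N^{2k}$. The set $\rho^{-1}(Y)$ is semilinear: it is $\{(\uu,\vv)\in\N^{2k}\colon \uu-\vv\in Y\}$, which I can exhibit as an effectively constructible semilinear set by taking a lift $\wt Y$ of $Y$ and adjoining to its period set the "diagonal" vectors $(e_i,e_i)$ for each $i$, since adding $(e_i,e_i)$ leaves the projection unchanged and sweeps out every fibre. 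Then $\rho\bigl(\wt X\cap\rho^{-1}(Y)\bigr)=X\cap Y$: a point lies in the image iff it is of the form $\uu-\vv$ with $(\uu,\vv)\in\wt X$, forcing it into $X$, and simultaneously $\uu-\vv\in Y$. The intersection $\wt X\cap\rho^{-1}(Y)$ of two semilinear subsets of $\N^{2k}$ is effectively semilinear by the cited Ginsburg--Spanier theorem.

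Finally I would push the resulting semilinear expression for $\wt X\cap\rho^{-1}(Y)\subset\N^{2k}$ through $\rho$. Since $\rho$ is an integer affine (indeed linear) map $\Z^{2k}\to\Z^k$ restricted to a semilinear set, \Cref{lem:semilinearimage} applies and yields an effectively constructible semilinear expression for the image, which equals $X\cap Y$. The step I expect to require the most care is verifying that $\rho^{-1}(Y)$ is effectively semilinear with the claimed period-enlargement, and confirming the set-theoretic identity $\rho\bigl(\wt X\cap\rho^{-1}(Y)\bigr)=X\cap Y$; everything else is a routine invocation of \Cref{lem:semilinearimage}, \Cref{rem:semilinearunion}, and the $\N^{2k}$ result.
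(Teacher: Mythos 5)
Your overall strategy is sound in outline: the identity $\rho\bigl(\wt X\cap\rho^{-1}(Y)\bigr)=X\cap Y$ is correct (given $\rho(\wt X)=X$), and if you could effectively construct $\rho^{-1}(Y)\cap\N^{2k}$ as a semilinear set, then the Ginsburg--Spanier theorem in $\N^{2k}$ together with Lemma \ref{lem:semilinearimage} would finish the job. The gap is exactly at the step you flagged: adjoining the diagonal periods $(e_i,e_i)$ to a lift $\wt Y$ does \emph{not} produce the full preimage. The fibre of $\rho$ over a point $y\in\Z^k$ is $(y^+,y^-)+\{(e_1,e_1),\ldots,(e_k,e_k)\}^*$, where $(y^+,y^-)$ is the coordinate-wise \emph{minimal} decomposition of $y$; so diagonal vectors sweep a fibre only upwards from whatever lift you start with, and any fibre point lying below your lifted point is missed. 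The naive lift of $y=c+\sum n_d d$, namely $\bigl(c^++\sum n_d d^+,\,c^-+\sum n_d d^-\bigr)$, is minimal only when no cancellation occurs between positive and negative parts, and such cancellation is unavoidable for general linear sets in $\Z^k$.

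Concretely, take $k=1$, $Y=2+\{-1\}^*=\{y\in\Z\colon y\le 2\}$, and $X=\{0\}$. Your lift is $\wt Y=\{(2,m)\colon m\in\N\}$, and adjoining the diagonal $(1,1)$ gives $\{(2+w,\,m+w)\colon m,w\in\N\}$, every element of which has first coordinate at least $2$. But $(0,0)\in\rho^{-1}(Y)\cap\N^2$ since $0\in Y$, so with $\wt X=\{(0,0)\}$ your construction outputs $\rho\bigl(\wt X\cap(\wt Y+\{(1,1)\}^*)\bigr)=\emptyset$ instead of $X\cap Y=\{0\}$. Repairing this requires making the lifts minimal, which in effect means first decomposing $Y$ into monotone (single-orthant) pieces --- but in the paper that orthant decomposition (Corollary \ref{cor:orthant}) is itself a corollary of Theorem \ref{thm:intersectionsemi}, so your argument would become circular unless you prove it independently. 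The paper sidesteps the issue by never lifting the ambient space: it runs Ginsburg--Spanier's argument in the exponent space $\N^{p+q}$, where the solution tuples are non-negative by construction and Dickson's Lemma applies, and it absorbs the integer-vector coefficients through Lemma \ref{lem:integerGS} (splitting each coefficient as $\vv_i=\tt_i-\uu_i$ and doubling the variables with consistency equations encoded in extra coordinates). To close your gap you would need either that lemma or some independent effective construction of $\rho^{-1}(Y)\cap\N^{2k}$.
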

\begin{definition}\label{def:Dorder}
	Define a partial order on \(\N^k\) by comparing entries coordinate-wise. That is, for any pair of vectors \(u,v\in\N^k\), \(u\leq v\) if and only if \(e_i\cdot u\leq e_i\cdot v\) for each \(i\in\{1,\ldots,k\}\), where \(e_i\) is the \(i\)th standard basis vector.
\end{definition}
The next result is well-known, sometimes referred to as Dickson's Lemma.
\begin{lemma}[\cite{Dickson}, Lemma A]\label{lem:Dickson}
	For any \(U\subset\N^k\), the set of minimal elements of \(U\), with respect to the partial order in Definition \ref{def:Dorder}, is finite.
\end{lemma}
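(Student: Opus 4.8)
The statement to prove is Dickson's Lemma (Lemma~\ref{lem:Dickson}): for any $U \subset \N^k$, the set of minimal elements of $U$ under the coordinate-wise partial order is finite. Let me sketch how I would prove this.

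\medskip

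The plan is to proceed by induction on the dimension $k$. For the base case $k=1$, the partial order on $\N$ is just the usual total order, so any nonempty subset has a unique minimal element (its least element), and the set of minimal elements has size at most one. For the inductive step, I would assume the result holds for $\N^{k-1}$ and deduce it for $\N^k$. The key observation is that the set $\mathrm{Min}(U)$ of minimal elements is an \emph{antichain}: no two distinct minimal elements are comparable. So it suffices to show that every antichain in $\N^k$ is finite.

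\medskip

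For the inductive step, the cleanest argument I know is by contradiction using the well-ordering of $\N$. Suppose $\mathrm{Min}(U)$ is infinite; then I can extract a sequence $v^{(1)}, v^{(2)}, \ldots$ of distinct elements of $\mathrm{Min}(U)$. Applying the infinite pigeonhole principle (or iterated extraction of subsequences) and the fact that each coordinate lives in the well-ordered set $\N$, I can pass to an infinite subsequence along which \emph{every} coordinate is non-decreasing: first choose a subsequence making the first coordinate non-decreasing (possible since any infinite sequence in $\N$ has a non-decreasing subsequence, as $\N$ is well-ordered), then refine to make the second coordinate non-decreasing, and so on through all $k$ coordinates. Along the resulting subsequence we have $v^{(i_1)} \leq v^{(i_2)}$ for two distinct indices, contradicting the fact that these are distinct minimal elements of $U$, which form an antichain. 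Hence $\mathrm{Min}(U)$ must be finite.

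\medskip

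An alternative, more explicitly inductive route avoids the subsequence extraction: pick any single element $w \in \mathrm{Min}(U)$ (if $U$ is empty there is nothing to prove). Any other minimal element $u$ must disagree with $w$ in at least one coordinate by being strictly smaller there (otherwise $u$ and $w$ would be comparable). Partitioning the remaining minimal elements according to which coordinate $j$ satisfies $u_j < w_j$ and fixing that value $u_j \in \{0, 1, \ldots, w_j - 1\}$, each piece projects into a copy of $\N^{k-1}$ where the inductive hypothesis applies; since there are finitely many coordinates and finitely many allowed values per coordinate, this yields finiteness. The main obstacle in either approach is purely organisational: making precise that the minimal elements form an antichain and then correctly bookkeeping the reduction to lower dimension. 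I would favour the well-ordered-subsequence argument for its brevity, invoking only that $\N$ is well-ordered.
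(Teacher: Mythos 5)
Your proof is correct, but note that the paper does not prove this lemma at all: it is quoted verbatim from the literature (Lemma A of \cite{Dickson}), so there is no internal argument to compare against, and what you have supplied is exactly the standard proof that the paper leaves to its reference. Both of your routes are sound. In the subsequence argument, the key facts are that minimal elements form an antichain and that any infinite sequence in \(\N^k\) contains a pair of coordinate-wise comparable distinct terms, obtained by \(k\) successive refinements using the well-ordering of \(\N\); this is a complete proof as written, though the induction-on-\(k\) framing you open with is vestigial, since the extraction argument never invokes the inductive hypothesis and works directly in dimension \(k\). In the second route, the one genuinely inductive point to be careful about is the one you flagged: the inductive statement must be phrased as \emph{every antichain in \(\N^{k-1}\) is finite} (equivalently, applied to the projected set, whose minimal elements are all of it), since the projections of a class \(\{u \in \mathrm{Min}(U)\colon u_j = c\}\) form an antichain in \(\N^{k-1}\) but need not be the minimal-element set of any previously given \(U' \subset \N^{k-1}\); with that reformulation, injectivity of the projection on each class and the finitely many choices of \((j,c)\) with \(c < w_j\) finish the argument. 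Either version would serve as a self-contained replacement for the citation.
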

Lemma \ref{lem:Dickson} does not provide an algorithm for finding such minimal solutions in general. The following result allows us to find these solutions in a particular case. For clarity, we use bold face letters to denote vectors in \(\Z^k\).

\begin{lemma}[\cite{GinsburgSpanier}, Lemma 6.4 and Lemma 6.5]\label{lem:naturalGS}
	Fix constants \(\tt_1,\ldots,\tt_p,\uu_1,\ldots,\uu_q\in\N^k\), and \(\ww\in\Z^k\), for some \(p,q\geq0\), and consider the following identity with variables \(x_i,y_i\)
	\begin{equation*}
		\sum_{i=1}^p x_i\tt_i - \sum_{i=1}^q y_i\uu_i = \ww.
	\end{equation*}
	Then
	\begin{enumerate}
		\item it is decidable to determine whether there exists a non-zero tuple \((x_1,\ldots,x_p,y_1,\ldots,y_q)\in\N^{p+q}\) that satisfies this identity, and
		\item if the identity has any such solutions, the finite set of tuples in \(\N^{p+q}\) which are minimal (in the sense of Lemma \ref{lem:Dickson}) amongst all non-zero solutions can be effectively constructed.
	\end{enumerate}
\end{lemma}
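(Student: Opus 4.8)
The plan is to prove Lemma~\ref{lem:naturalGS} by reducing the single $\Z^k$-valued identity to a purely natural-number problem, where the cited results of Ginsburg and Spanier (their Lemma 6.4 and Lemma 6.5) apply directly coordinate-by-coordinate. First I would observe that the identity
\[
  \sum_{i=1}^p x_i\tt_i - \sum_{i=1}^q y_i\uu_i = \ww
\]
is a system of $k$ scalar linear equations in the $p+q$ nonnegative unknowns $(x_1,\ldots,x_p,y_1,\ldots,y_q)$, with all the coefficients $e_j\cdot\tt_i$ and $e_j\cdot\uu_i$ lying in $\N$ (by hypothesis $\tt_i,\uu_i\in\N^k$) but with right-hand entries $e_j\cdot\ww$ possibly negative (since $\ww\in\Z^k$).

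Next I would move every negative contribution to the opposite side so that each scalar equation becomes an equality between two nonnegative-coefficient linear forms in the variables, plus nonnegative constants. Concretely, for each coordinate $j$ split $e_j\cdot\ww$ by sign and rewrite the $j$th equation as
\[
  \sum_{i=1}^p (e_j\cdot\tt_i)\,x_i + \max(-e_j\cdot\ww,0)
    = \sum_{i=1}^q (e_j\cdot\uu_i)\,y_i + \max(e_j\cdot\ww,0).
\]
This exhibits the whole identity as a homogeneous-plus-constant system over $\N$ of exactly the shape handled in \cite{GinsburgSpanier}; the work of Ginsburg and Spanier gives both the decidability of the existence of a nonzero solution in $\N^{p+q}$ (part~(1)) and the effective computation of the finite set of minimal nonzero solutions with respect to the coordinatewise order of Lemma~\ref{lem:Dickson} (part~(2)). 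Dickson's Lemma (Lemma~\ref{lem:Dickson}) guarantees that this minimal set is indeed finite, so the only content beyond the reduction is the effectiveness, which is precisely what the cited lemmas supply.

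The main point requiring care, and the step I expect to be the chief obstacle, is verifying that the notion of \emph{minimal} solution is preserved under the reformulation. The partial order in Lemma~\ref{lem:Dickson} is taken on the tuples $(x_1,\ldots,x_p,y_1,\ldots,y_q)\in\N^{p+q}$, and my rewriting does not alter the variables or their ambient order, only the bookkeeping of constants; so a tuple is minimal among nonzero solutions of the original $\Z^k$ identity if and only if it is minimal among nonzero solutions of the rearranged $\N$-system. I would state this equivalence explicitly and note that it is immediate because the solution set is literally unchanged. With that established, parts~(1) and~(2) follow verbatim from \cite[Lemma 6.4 and Lemma 6.5]{GinsburgSpanier} applied to the rearranged system, completing the proof.
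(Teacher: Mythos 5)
The paper offers no proof of this lemma at all --- it is stated as a direct citation of Ginsburg--Spanier's Lemmas 6.4 and 6.5 --- and your proposal amounts to the same thing: a solution-set-preserving rearrangement (splitting each coordinate of \(\ww\) into its positive and negative parts, which is exactly the form \(\ww^- + \sum_i x_i\tt_i = \ww^+ + \sum_i y_i\uu_i\) treated by Ginsburg and Spanier in their proof of semilinear intersection) followed by an appeal to those same two lemmas. Your rearrangement is correct, minimality is manifestly preserved since the tuples and their coordinatewise order are untouched, so this matches the paper's treatment.
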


We now prove the following adapted version of Lemma \ref{lem:naturalGS} so that we can deal with linear sets in \(\Z^k\).
\begin{lemma}\label{lem:integerGS}
	Fix constants \(\vv_1,\ldots \vv_m,\ww\in\Z^k\), for some \(m\geq0\), and consider the following identity with variables \(x_i\)
	\begin{equation*}
		\sum_{i=1}^m x_i\vv_i = \ww.
	\end{equation*}
	Then
	\begin{enumerate}
		\item it is decidable to determine whether there exists a non-zero tuple \((x_1,\ldots,x_m)\in\N^m \) that satisfies this identity, and
		\item if the identity has any non-zero solutions, the finite set of tuples which are minimal (in the sense of Lemma \ref{lem:Dickson}) amongst all non-zero solutions can be effectively constructed.
	\end{enumerate}
\end{lemma}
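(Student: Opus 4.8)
The plan is to reduce the integer-coefficient identity to the non-negative setting of \cref{lem:naturalGS} by splitting each vector into its positive and negative parts, and then to recover the coupling that this split destroys by enlarging the ambient dimension. First I would write each \(\vv_i = \vv_i^+ - \vv_i^-\), where \(\vv_i^+, \vv_i^- \in \N^k\) are the coordinatewise positive and negative parts, so that the identity \(\sum_{i=1}^m x_i\vv_i = \ww\) becomes \(\sum_i x_i\vv_i^+ - \sum_i x_i\vv_i^- = \ww\). This already has the shape of the identity in \cref{lem:naturalGS}, except that the coefficient of \(\vv_i^+\) must equal the coefficient of \(\vv_i^-\); applying \cref{lem:naturalGS} naively would treat these as independent variables \(x_i\) and \(y_i\).

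To force \(x_i = y_i\), I would pass to \(\N^{k+m}\) and set \(\tt_i = (\vv_i^+, e_i)\) and \(\uu_i = (\vv_i^-, e_i)\), where \(e_i\) is the \(i\)th standard basis vector of \(\N^m\), with target \(\ww' = (\ww, 0) \in \Z^{k+m}\). Applying \cref{lem:naturalGS} with \(p = q = m\) to the identity \(\sum_i x_i\tt_i - \sum_i y_i\uu_i = \ww'\), the first \(k\) coordinates reproduce the original identity while the last \(m\) coordinates read \(x_i - y_i = 0\) for each \(i\). Hence the solution tuples \((x_1,\ldots,x_m,y_1,\ldots,y_m) \in \N^{2m}\) of the augmented system are exactly those with \(y_i = x_i\) and \((x_1,\ldots,x_m)\) solving \(\sum_i x_i\vv_i = \ww\). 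Part (1) of \cref{lem:naturalGS} then decides the existence of a non-zero solution of the augmented system, and such a solution is non-zero precisely when the corresponding \((x_1,\ldots,x_m)\) is; this gives statement (1), the degenerate case \(m=0\) being immediate.

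For statement (2), I would transport the minimal solutions along the projection \((x,y) \mapsto x\). Because every solution of the augmented system satisfies \(x = y\), this projection is a bijection from the augmented solution set onto the solution set of the original identity, with inverse \(x \mapsto (x,x)\); under the coordinatewise order it is both order-preserving and order-reflecting, since \((x,x) \leq (x',x')\) holds iff \(x \leq x'\). Minimal non-zero solutions therefore correspond under this bijection, and the finite list produced by part (2) of \cref{lem:naturalGS}, after projecting off the last \(m\) coordinates, yields the required minimal non-zero solutions. The only genuinely delicate point is the coupling of the positive and negative parts: a direct split into \(\N^k\) loses the constraint \(x_i = y_i\), and the dimension-augmentation trick is exactly what restores it while keeping the system within the hypotheses of \cref{lem:naturalGS}; the remaining checks that non-zeroness and minimality survive the projection are routine.
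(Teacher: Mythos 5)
Your proposal is correct and follows essentially the same route as the paper: both split each \(\vv_i\) into non-negative parts, augment the dimension from \(k\) to \(k+m\) with standard basis vectors to force the coupling \(x_i = y_i\), and then invoke Lemma \ref{lem:naturalGS}. Your explicit verification that the diagonal projection preserves and reflects minimality is slightly more detailed than the paper's one-line remark, but the argument is the same.
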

\begin{proof}
	For each \(i\), decompose the vector \(\vv_i=\tt_i-\uu_i\) for \(\tt_i,\uu_i\in\N^k\) and rewrite the identity as
	\begin{equation*}
		\sum_{i=1}^m x_i\tt_i - \sum_{i=1}^m x_i \uu_i = \ww.
	\end{equation*}
	A tuple \((x_1,\ldots,x_m)\) is a solution to this identity if and only if it can be extended to a solution \((x_1,\ldots,x_{2m})\) to the system of equations
	\begin{align*}
		\sum_{i=1}^m x_i\tt_i - \sum_{i=1}^m x_{m+i}\uu_i &= \ww \\
		x_i &=x_{m+i},~i\in\{1,\ldots,m\}.
	\end{align*}
	The identities between the coefficients \(x_i\) can be encoded back into a single equation as
	\begin{align*}
		\sum_{i=1}^m x_i\tilde{\tt}_i - \sum_{i=1}^m x_{m+i}\tilde{\uu}_i = \tilde{\ww},
	\end{align*}
	where \(\tilde{\tt}_i\in\N^{k+m}\), with \(\tt_i\) providing the first \(k\) coordinates, \(1\) in the \((k+i)th\) position, and zeroes elsewhere; \(\tilde{\uu}_i\) defined analogously; and \(\tilde{\ww}\) simply \(\ww\) with \(m\) extra zero entries. A tuple \((x_1,\ldots,x_m)\) solves the original identity if and only if its double \((x_1,\ldots,x_m,x_1,\ldots,x_m)\) solves this latter equation. Furthermore, such a pair of solutions are either both minimal or both non-minimal.

	Noting that each \(\tilde{\tt}_i\) and \(\tilde{\uu}_i\) have non-negative integer entries, we can now apply Lemma \ref{lem:naturalGS} to finish the proof.
\end{proof}

\begin{proof}[Proof of Theorem \ref{thm:intersectionsemi}]
We closely follow the proof of Theorem 6.1 of \cite{GinsburgSpanier}, but appealing to Lemma \ref{lem:integerGS}. Since \((X_1\cup X_2)\cap Y = (X_1\cap Y)\cup (X_2\cup Y)\), and in light of Remark \ref{rem:semilinearunion}, it suffices to prove the theorem for \(X\) and \(Y\) linear sets. So let \(X=\aa+\{\bb_1,\ldots,\bb_p\}^*\) and \(Y=\cc+\{\dd_1,\ldots,\dd_q\}^*\) for \(\aa,\bb_i,\cc,\dd_j\in\Z^k\).

Define the set of `solutions' to a linear equation that express the intersection $X \cap Y$:
\begin{align*}
	U &= \left\{(x_1,\ldots,x_{p+q})\in\N^{p+q} : \aa+\sum_{i=1}^p x_i\bb_i = \cc+\sum_{i=p+1}^{p+q} x_i\dd_i\right\}. \\
\end{align*}
Furthermore, we define a set \(V\) with the property that adding an element of \(V\) to an element of \(U\) results in another element of \(U\):
\begin{align*}
	V &= \left\{(x_1,\ldots,x_{p+q})\in\N^{p+q} : \sum_{i=1}^p x_i\bb_i = \sum_{i=p+1}^{p+q} x_i \dd_i\right\}.
\end{align*}
We claim that \(U\) is semilinear and effectively constructible.
Let \(\tau\colon\N^{p+q}\to\Z^k\) be the map given by \((x_1,\ldots,x_{p+q})\mapsto\sum_{i=1}^p x_i\bb_i\), which is (affine) linear, given by the \(k\times (p+q)\) matrix whose first \(p\) columns are the vectors \(\bb_1,\ldots,\bb_p\) and remaining \(q\) columns are filled with zeroes. We have \(X\cap Y=\{\aa+\tau(u) : u\in U\}=\aa+\tau(U)\). Then, by our claim and Lemma \ref{lem:semilinearimage}, \(X\cap Y\) is semilinear and effectively constructible, and the theorem is proved.

To prove the claim, let \(U_{\min}\) and \(V_{\min}\) be the minimal subsets of \(U\) and \(V\setminus\{\textbf{0}\}\), respectively (which are finite by Lemma \ref{lem:Dickson}). By Lemma \ref{lem:integerGS} we can compute the elements of \(U_{\min}\) and \(V_{\min}\) (using \(\aa,\bb_i,\cc,\dd_j\)). We will now show that \(U=\{u+v : u\in U_{\min},v\in V_{\min}^*\}\), proving the claim.
If \(u\in U_{\min}\) and \(v\in V_{\min}^*\) then clearly \(u+v\in U\). Conversely, suppose \(w\in U\). By definition of \(U_{\min}\), there is some \(w'\in U_{\min}\) such that \(w'\leq w\). Let \(w''=w-w'\in\N^k\). We have
\begin{align*}
	\sum_{i=1}^p w_i''\bb_i &= \sum_{i=1}^p(w_i-w_i')\bb_i = \sum_{i=1}^p w_i\bb_i-\sum_{i=1}^pw_i' \bb_i \\
	&=(\cc-\aa)+\sum_{i=p+1}^{p+q} w_i\dd_i - (\cc-\aa)-\sum_{i=p+1}^{p+q}w_i'\dd_i \\
	&=\sum_{i=p+1}^{p+q}(w_i-w_i')\dd_i = \sum_{i=p+1}^{p+q}w_i''\dd_i,
\end{align*}
and so \(w''\in V\).

Finally, we prove that \(V\subset V_{\min}^*\), which finishes the proof. We induct on the sum of entries \(\sum_{i=1}^{p+q} v_i\) of a vector \(v\in V\). First note the base case: \(V_{\min}^*\) contains the zero vector. Assume that for any \(v\in V\) with \(\sum_{i=1}^{p+q}v_i\leq k\), we have \(v\in V_{\min}^*\). Now  consider \(v\in V\) with \(\sum_{i=1}^{p+q}v_i=k+1\). There exists \(v'\in V_{\min}\) with \(v'\leq v\). As above, write \(v''=v-v'\). Clearly \(v''\in V\) too. Now \(v'\neq 0\) so \(v''\neq v\), so \(\sum_{i=1}^{p+q}v_i''<k+1\), so \(v''\in V_{\min}^*\) by the inductive hypothesis. And now \(v=v'+v''\in V_{\min}^*\).
\end{proof}

Theorem \ref{thm:intersectionsemi} has several corollaries, as follows.
\begin{corollary}\label{cor:semipreimage}
	Given a semilinear subset \(S\subseteq\Z^k\), and an integer affine transformation \(\phi \colon \Z^m\to\Z^k\), the preimage \(\phi^{-1}(S)\) is semilinear and effectively constructible.
\end{corollary}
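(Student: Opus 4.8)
The plan is to reduce the preimage computation to an intersection followed by an image, both of which we can already carry out effectively, by means of the standard ``graph trick''. Let $\Gamma_\phi = \{(\mathbf{x}, \phi(\mathbf{x})) : \mathbf{x} \in \Z^m\} \subseteq \Z^{m+k}$ be the graph of $\phi$, and let $\rho \colon \Z^{m+k} \to \Z^m$ be the projection onto the first $m$ coordinates. The key identity is
\[
	\phi^{-1}(S) = \rho\bigl(\Gamma_\phi \cap (\Z^m \times S)\bigr),
\]
so it suffices to show that each of the three sets appearing on the right-hand side is effectively semilinear.

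First I would construct $\Gamma_\phi$ directly as a linear set. Writing $\phi(\mathbf{x}) = M\mathbf{x} + \mathbf{c}$, I claim that
\[
	\Gamma_\phi = (\mathbf{0}, \mathbf{c}) + \{(\pm e_i, \pm M e_i) : 1 \le i \le m\}^*,
\]
where $e_i$ is the $i$th standard basis vector of $\Z^m$ and the two signs are chosen together. Indeed, a nonnegative combination of the generators $(e_i, Me_i)$ and $(-e_i, -Me_i)$ yields exactly the pairs $(\mathbf{x}, M\mathbf{x})$ as $\mathbf{x}$ ranges over all of $\Z^m$; translating by $(\mathbf{0}, \mathbf{c})$ then gives the graph. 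All of this data is read off directly from $M$ and $\mathbf{c}$, so $\Gamma_\phi$ is effectively constructible.

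Next I would build the cylinder $\Z^m \times S$. The product of two effectively semilinear sets is again effectively semilinear: if $S = \bigcup_j (\mathbf{a}_j + B_j^*)$, then $\Z^m \times S = \bigcup_j \bigl((\mathbf{0}, \mathbf{a}_j) + (\{(\pm e_i, \mathbf{0})\} \cup \{(\mathbf{0}, \mathbf{b}) : \mathbf{b} \in B_j\})^*\bigr)$, and since $\Z^m = \mathbf{0} + \{\pm e_i\}^*$ is itself linear, the cylinder is effectively semilinear. With explicit semilinear expressions for both $\Gamma_\phi$ and $\Z^m \times S$ in hand, Theorem \ref{thm:intersectionsemi} produces an effective semilinear expression for their intersection, and then Lemma \ref{lem:semilinearimage}, applied to the (linear) projection $\rho$, produces one for the image $\phi^{-1}(S)$, completing the argument.

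The only genuine subtlety, and hence where I would expect whatever difficulty there is to lie, is the bookkeeping forced by the monoid-generation convention: a linear set $\mathbf{a} + B^*$ takes only \emph{nonnegative} combinations of $B$, so recovering the full groups $\Z^m$ and $\Gamma_\phi$ requires including both the $+$ and $-$ generators, and one must check that the resulting submonoid $B^*$ really is all of $\Z^m$ rather than a proper subset. Once this verification is done, the substantive work has already been carried out inside Theorem \ref{thm:intersectionsemi} and Lemma \ref{lem:semilinearimage}, and nothing further is required.
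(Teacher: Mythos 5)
Your proposal is correct and follows essentially the same route as the paper: the same graph-trick identity \(\phi^{-1}(S) = p\bigl(\Gamma_\phi \cap (\Z^m \times S)\bigr)\), with the intersection handled by Theorem \ref{thm:intersectionsemi} and the final projection by Lemma \ref{lem:semilinearimage}. The only cosmetic difference is that you write out the graph and the cylinder explicitly as (semi)linear sets, whereas the paper obtains the graph as the image of \(\Z^m\) under the integer affine map \(x \mapsto (x,\phi(x))\) via Lemma \ref{lem:semilinearimage}, which unwinds to exactly your generators \((\pm e_i, \pm M e_i)\).
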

\begin{proof}
	Let \(\theta\colon\Z^m\to\Z^{m+k}\) be the integer affine map defined by \(x\mapsto(x,\phi(x))\), whose image \(\theta(\Z^m)\) is semilinear and effectively constructible by Lemma \ref{lem:semilinearimage}. Since \(S\) is semilinear in \(\Z^k\), it is straightforward to construct \(\Z^m\times S\) as a semilinear subset of \(\Z^{m+k}\). By Theorem \ref{thm:intersectionsemi}, \(\theta(\Z^m)\cap(\Z^m\times S)\) is semilinear and effectively constructible. If we let \(p\colon\Z^{m+k}\to\Z^m\) be the integer affine map defined by projecting to the first \(m\) coordinates, we have
	\[\phi^{-1}(S) = p\left(\theta(\Z^m)\cap(\Z^m\times S)\right)\]
	and so by Lemma \ref{lem:semilinearimage} again, \(\phi^{-1}(S)\) is semilinear and effectively constructible.
\end{proof}


\begin{definition}\label{def:orthants}
	We define an \emph{orthant} in analogy with a quadrant in 2 dimensions.
	\begin{enumerate}
		\item A subset \(U\) of \(\Z^k\) will be called \emph{monotone} if, for each \(i\in\{1,\ldots,k\}\), the \(i\)th coordinates of every element of \(U\) have the same sign, or are zero.
		\item An \emph{orthant} of \(\Z^k\) is a monotone set that is maximal (by inclusion). That is, not contained in any larger monotone set. For example, if \(k=2\), an orthant is a quadrant, including the axes and the origin.
	\end{enumerate}
\end{definition}
\begin{lemma}\label{lem:Zkpartition}
	For each \(k\geq1\), \(\Z^k\) can be partitioned into \(2^k\) (effectively constructible) monotone linear subsets.
\end{lemma}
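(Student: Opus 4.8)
The plan is to split $\Z^k$ into $2^k$ pieces indexed by sign patterns, being careful to break ties on the coordinate hyperplanes so that the pieces are genuinely disjoint. The naive choice of the $2^k$ closed orthants does \emph{not} give a partition, because they overlap along the coordinate axes (every axis point, and the origin, lies in several closed orthants). The fix is to make each coordinate split ``half-open'': in each coordinate I will demand either $x_i\geq 0$ or $x_i\leq -1$, which are mutually exclusive and jointly exhaustive.

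Concretely, for each $\varepsilon=(\varepsilon_1,\ldots,\varepsilon_k)\in\{0,1\}^k$ I would define
\[
  R_\varepsilon=\{x\in\Z^k\colon x_i\geq 0 \text{ if } \varepsilon_i=0,\ x_i\leq -1 \text{ if } \varepsilon_i=1\}.
\]
First I would verify that $\{R_\varepsilon\}_{\varepsilon\in\{0,1\}^k}$ partitions $\Z^k$: any $x$ lies in precisely the set $R_\varepsilon$ with $\varepsilon_i=0$ exactly when $x_i\geq0$, and distinct patterns give disjoint sets since $x_i\geq0$ and $x_i\leq-1$ cannot both hold. There are exactly $2^k$ patterns. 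Monotonicity (in the sense of \cref{def:orthants}) is then immediate: on $R_\varepsilon$ the $i$th coordinate is nonnegative when $\varepsilon_i=0$ and strictly negative when $\varepsilon_i=1$, so each coordinate has a fixed sign or is zero.

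The key step is exhibiting each $R_\varepsilon$ explicitly as a linear set $a_\varepsilon+B_\varepsilon^*$. I would set the $i$th coordinate of the base point $a_\varepsilon$ to be $0$ if $\varepsilon_i=0$ and $-1$ if $\varepsilon_i=1$, and take $B_\varepsilon=\{s_i e_i\colon i=1,\ldots,k\}$ with $s_i=+1$ when $\varepsilon_i=0$ and $s_i=-1$ when $\varepsilon_i=1$. Adding $e_i$ repeatedly sweeps out $\{0,1,2,\ldots\}$ in coordinate $i$, while adding $-e_i$ repeatedly starting from $-1$ sweeps out $\{-1,-2,\ldots\}$; since the generators act on disjoint coordinates, the submonoid generated by all the $s_i e_i$ recovers exactly the defining conditions of $R_\varepsilon$. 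Because $a_\varepsilon$ and $B_\varepsilon$ are read off directly from $\varepsilon$, each of the $2^k$ sets is effectively constructible, which completes the argument.

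I expect no genuine obstacle here: all the content lies in choosing the half-open split so that the pieces are simultaneously disjoint and each still a \emph{single} linear set. The only thing to guard against is the tempting but incorrect alternative of using closed orthants, which destroys disjointness along the axes.
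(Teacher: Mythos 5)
Your proposal is correct and uses essentially the same decomposition as the paper: the half-open orthants determined by requiring \(x_i\geq 0\) or \(x_i\leq -1\) in each coordinate, realised as linear sets with base point entries \(0\) or \(-1\) and generators \(\pm e_i\). The paper builds exactly these sets by induction on \(k\) (adjoining \(\{e_k\}^*\) or \(e_k^{-1}+\{e_k^{-1}\}^*\) to each piece of the partition of \(\Z^{k-1}\)), whereas you write them all down directly via sign patterns; the resulting partition is identical.
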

\begin{proof}
	Write \(e_1,\ldots e_k\) for the standard generators of \(\Z^k\). The proof is by induction on \(k\). For \(k=1\) we have the partition \(\Z=\{e_1\}^*\cup(e_1^{-1}+\{e_1^{-1}\}^*)\). Now assume that we have a disjoint union \(\Z^{k-1}=\bigcup_{i=1}^{2^{k-1}}(b_i+C_i^*)\) where each \(b_1+C_i^*\) is a monotone linear set. For each \(i\), define \(U^+_i=b_i+C_i^*+\{e_k\}^*=b_i+(C_i\cup\{e_k\})^*\) and \(U^-_i=b_i+C_i^*+e_k^{-1}+\{e_k^{-1}\}^*=b_i+e_k^{-1}+(C_i\cup\{e_k^{-1}\})^*\). By construction, each of these new sets is monotone and linear, and they are all disjoint. Furthermore, we have \(\Z^k=\bigcup_{i=1}^{2^{k-1}}(U_i^+\cup U_i^-)\), which is a disjoint union of \(2^{k}\) monotone linear sets.
\end{proof}
\begin{definition}\label{def:Qi}
	Let \(\bigcup_{i=1}^{2^k}Q_i\) denote any partition of \(\Z^k\) into \(2^k\) monotone linear sets. Lemma \ref{lem:Zkpartition} ensures that such a partition exists and is effectively constructible.
\end{definition}
As a corollary of Theorem \ref{thm:intersectionsemi}, a decomposition also exists for any semilinear subset of \(\Z^k\).
\begin{corollary}\label{cor:orthant}
	Let \(S\subseteq\Z^k\) be semilinear. Then we may decompose \(S\) as a finite disjoint union of (effectively constructible) monotone semilinear sets.
\end{corollary}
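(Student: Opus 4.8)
The plan is to intersect \(S\) with the fixed partition of \(\Z^k\) into monotone linear pieces supplied by \Cref{lem:Zkpartition} and \Cref{def:Qi}. Writing \(\Z^k=\bigcup_{i=1}^{2^k}Q_i\) for such a partition, I would start from the identity
\begin{equation*}
	S = S\cap\Z^k = S\cap\bigcup_{i=1}^{2^k}Q_i = \bigcup_{i=1}^{2^k}\left(S\cap Q_i\right),
\end{equation*}
and propose to show that each constituent \(S\cap Q_i\) is an effectively constructible monotone semilinear set, and that the union is disjoint.

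First I would handle semilinearity and effectivity. Each \(Q_i\) is (monotone) linear, hence semilinear, and is effectively constructible by \Cref{lem:Zkpartition}; \(S\) is semilinear and given. Therefore, by \Cref{thm:intersectionsemi}, each intersection \(S\cap Q_i\) is semilinear and effectively constructible from \(S\) and \(Q_i\). Since there are only \(2^k\) pieces, we obtain all of them in finitely many applications of the theorem.

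Next I would verify that each \(S\cap Q_i\) is monotone. This is where I would be careful, but it is immediate from \Cref{def:orthants}: monotonicity is the condition that, coordinate by coordinate, every element of the set has entries of a single fixed sign (or zero), and this property is clearly inherited by subsets. Since \(S\cap Q_i\subseteq Q_i\) and \(Q_i\) is monotone, \(S\cap Q_i\) is monotone as well. Finally, the union is disjoint because the \(Q_i\) are pairwise disjoint by construction, so the sets \(S\cap Q_i\) are pairwise disjoint too.

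I do not expect a genuine obstacle here; the statement is essentially a packaging of \Cref{lem:Zkpartition} together with the effective intersection result \Cref{thm:intersectionsemi}. The only point requiring a word of justification—rather than a computation—is the closure of monotonicity under passing to subsets, and this follows directly from the definition. The substantive content has already been absorbed into \Cref{thm:intersectionsemi}, which is what makes the decomposition \emph{effective} rather than merely existential.
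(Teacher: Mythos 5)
Your proposal is correct and is essentially identical to the paper's own proof: both intersect \(S\) with the partition \(\Z^k=\bigcup_{i=1}^{2^k}Q_i\) of \Cref{def:Qi}, invoke \Cref{thm:intersectionsemi} for effective semilinearity of each \(S\cap Q_i\), and observe that monotonicity and disjointness are inherited from the \(Q_i\). The paper states these steps more tersely, but there is no difference in substance.
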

\begin{proof}
	Let \(S_i=S\cap Q_i\) for each \(i\). Each \(S_i\) is monotone by construction and semilinear and effectively constructible by Theorem \ref{thm:intersectionsemi}. Their union is disjoint and equal to \(S\).
\end{proof}

A further corollary of Theorem \ref{thm:complementsemi} concerns so-called \emph{polyhedral sets}, which are subsets of \(\Z^k\) defined by affine hyperplanes. They were introduced by Benson in \cite{Benson} to study the growth series of virtually abelian groups and have since proved to be an invaluable tool for other applications to growth and equations in these groups (e.g. \cite{EvettsLevine}).
\begin{definition}\label{def:PolSets}
	Let $k\in\N$, and let $\cdot$ denote the Euclidean scalar product.
	\begin{enumerate}
		\item[(i)] Any subset of $\Z^k$ of the form
		\begin{enumerate}
			\item[(1)] $\{z\in\Z^k : u\cdot z=a\}$,
			\item[(2)] $\{z\in\Z^k : u\cdot z\equiv a\mod b\}$, or
			\item[(3)] $\{z\in\Z^k : u\cdot z>a\}$
		\end{enumerate}
		for $u\in\Z^k$, $a\in\Z$, $b\in\N$, is an \emph{elementary region}, of type (1), (2), and (3) respectively;

		\item[(ii)]  any finite intersection of elementary sets will be called a \emph{basic polyhedral set};
		\item[(iii)]  any finite union of basic polyhedral sets will be called a \emph{polyhedral set}.
	\end{enumerate}
\end{definition}
\begin{proposition}\cite[Proposition 3.11]{CiobanuEvetts}\label{prop:polysemi}
	A subset of \(\Z^k\) is polyhedral if and only if it is semilinear.
\end{proposition}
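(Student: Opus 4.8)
The plan is to prove both directions of the equivalence directly, using the machinery already established about semilinear sets being closed under intersection (Theorem \ref{thm:intersectionsemi}) and complement.

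For the \emph{polyhedral implies semilinear} direction, since polyhedral sets are finite unions of basic polyhedral sets, and semilinear sets are closed under union (Remark \ref{rem:semilinearunion}), it suffices to handle a single basic polyhedral set, which is a finite intersection of elementary regions. Because semilinear sets are closed under intersection by Theorem \ref{thm:intersectionsemi}, it then suffices to show that each of the three types of elementary region is semilinear. For type (1), the set \(\{z \mid u \cdot z = a\}\) is a coset of the sublattice \(\ker(z \mapsto u \cdot z)\), which is a finitely generated subgroup of \(\Z^k\) and hence semilinear (a finitely generated subgroup \(H = \langle h_1, \ldots, h_r \rangle\) equals the linear set \(0 + \{\pm h_1, \ldots, \pm h_r\}^*\)). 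For type (2), the congruence condition \(u \cdot z \equiv a \pmod b\) similarly defines a finite union of cosets of a finite-index sublattice, hence semilinear. For type (3), the half-space \(\{z \mid u \cdot z > a\}\) I would handle by first decomposing \(\Z^k\) into its \(2^k\) orthants using Lemma \ref{lem:Zkpartition}, and showing the intersection with each orthant is semilinear; within a fixed orthant the linear functional \(z \mapsto u \cdot z\) behaves monotonically in each coordinate, so the half-space becomes a positive-combination condition that can be expressed as a finite union of linear sets.

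For the \emph{semilinear implies polyhedral} direction, by taking unions it again suffices to show that a single linear set \(a + B^*\) with \(B = \{b_1, \ldots, b_m\}\) is polyhedral. The natural approach is to realise \(a + B^*\) as the image of the orthant \(\N^m\) under the affine map \(\phi\colon (x_1, \ldots, x_m) \mapsto a + \sum_i x_i b_i\), and then argue that the image of a polyhedral (indeed semilinear) set under an integer affine map is polyhedral. Since \(\N^m\) is clearly polyhedral (it is the intersection of the half-spaces \(x_i \geq 0\)), the crux is a projection/image statement: the image of a polyhedral set under an integer affine map is again polyhedral.

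The main obstacle is precisely this closure of polyhedral sets under affine images (equivalently, under coordinate projection). Over \(\Q\) or \(\R\) this is classical Fourier--Motzkin elimination, but over \(\Z\) the projection of an integer half-space need not be a single integer half-space, and one must account for the type (2) congruence conditions that arise when eliminating a variable with a nontrivial coefficient. The cleanest route around this is to avoid proving image-closure from scratch and instead lean on the results already available: since Theorem \ref{thm:intersectionsemi} (and its companion complement result, used via Corollary \ref{cor:orthant}) shows that semilinear sets form a Boolean-closed class containing the elementary regions, one can invoke the known equivalence of semilinearity with first-order definability in Presburger arithmetic, under which both polyhedral and semilinear sets coincide with the Presburger-definable subsets of \(\Z^k\); the half-space and congruence conditions are Presburger formulas, and quantifier elimination in Presburger arithmetic yields exactly finite Boolean combinations of the three elementary region types, giving both inclusions simultaneously. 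I expect the authors' actual proof to be the more hands-on version restricted to \(\N^k\) from \cite{CiobanuEvetts}, lifted to \(\Z^k\) via the orthant decomposition of Lemma \ref{lem:Zkpartition}, so in writing a self-contained argument I would follow that orthant-by-orthant strategy and reduce the \(\Z^k\) statement to the already-known \(\N^k\) case.
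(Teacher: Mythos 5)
You should first note what the paper actually does with this statement: it does not prove it at all, but imports it as \cite[Proposition 3.11]{CiobanuEvetts}. The only argument in this paper is the effective version of one direction, in Corollary \ref{cor:polytosemi}, whose skeleton is exactly your first paragraph: elementary regions are semilinear (delegated to the proof in \cite{CiobanuEvetts}), and polyhedral sets are finite unions of finite intersections of these, so Remark \ref{rem:semilinearunion} and Theorem \ref{thm:intersectionsemi} finish. Your converse direction, via Presburger arithmetic, is a genuinely different route from the hands-on lattice constructions of the cited source, and it is sound and non-circular: a linear set \(a+B^*\) is existentially definable, quantifier elimination converts any such formula into a Boolean combination of atoms \(u\cdot z=a\), \(u\cdot z>a\), \(u\cdot z\equiv a \bmod b\), and Boolean combinations collapse to finite unions of finite intersections because the negation of each atom is a positive combination of atoms; hence the set is polyhedral. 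Since Presburger quantifier elimination is algorithmic, this route is even compatible with the paper's effectiveness requirements. What it buys is that both inclusions (and the over-\(\Z\) projection problem you correctly identified as the crux) come for free from one imported theorem; what it costs is exactly that import, whereas the paper's apparatus (Lemmas \ref{lem:naturalGS} and \ref{lem:integerGS}, Theorem \ref{thm:intersectionsemi}, the orthant decomposition of Lemma \ref{lem:Zkpartition}) is a self-contained effective substitute for it.

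There is, however, a genuine gap in your first direction: the treatment of type (3) elementary regions. On a fixed orthant, say \(\N^k\), the functional \(z\mapsto u\cdot z\) is monotone in each coordinate only in a direction depending on the sign of \(u_i\), so when \(u\) has entries of both signs the region \(\{z\in\N^k\colon u\cdot z>a\}\) is \emph{not} upward closed and no ``positive-combination condition'' follows from monotonicity. Concretely, for \(u=(1,-1)\), \(a=0\), the region in \(\N^2\) is \(\{z_1>z_2\}\), which contains \((1,0)\) but not \((1,5)\); it is not ``minimal elements plus \(\N^2\)''. Two repairs are available with tools already at hand. (i) Introduce a slack variable and write the region as the projection to the \(z\)-coordinates of \(\{(z,y)\in\N^{k+1}\colon u\cdot z-y=a+1\}\); this is the solution set of a single linear equation over \(\N\), which is semilinear by the argument in the proof of Theorem \ref{thm:intersectionsemi} (minimal solutions plus the monoid generated by minimal homogeneous solutions, via Lemma \ref{lem:naturalGS}), and its projection is semilinear by Lemma \ref{lem:semilinearimage}. (ii) Avoid orthants entirely: write \(u=du'\) with \(d>0\) and \(u'\) primitive, take a unimodular automorphism of \(\Z^k\) whose first row is \(u'\); the region becomes \(\{z\colon z_1\geq\lfloor a/d\rfloor+1\}\), visibly a linear set, and one pulls back by Lemma \ref{lem:semilinearimage}. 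With either repair, and modulo the harmless degenerate cases (empty regions in types (1) and (2)), your first direction is complete; types (1) and (2) as cosets of finitely generated sublattices are fine as you state them.
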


 Here we note that a semilinear set is effectively constructible from its polyhedral expression.
\begin{corollary}\label{cor:polytosemi}
	Given an expression for a polyhedral set (as a finite union of finite intersections of elementary sets), we can effectively construct a semilinear expression for the same set.
\end{corollary}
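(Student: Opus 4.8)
The plan is to reduce the corollary to the three effective-construction results already established: preimages under integer affine maps (Corollary \ref{cor:semipreimage}), intersections (Theorem \ref{thm:intersectionsemi}), and unions (Remark \ref{rem:semilinearunion}). Since a polyhedral set is presented as a finite union of finite intersections of elementary regions (Definition \ref{def:PolSets}), and since unions and intersections of effectively constructible semilinear sets are again effectively constructible, it suffices to show that each individual elementary region is an effectively constructible semilinear subset of \(\Z^k\).

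First I would treat a single elementary region defined by a vector \(u\in\Z^k\). In all three types the defining condition depends only on the value \(u\cdot z\), so I would introduce the integer affine (in fact linear) map \(\phi\colon\Z^k\to\Z\) given by \(\phi(z)=u\cdot z\), realised by the \(1\times k\) matrix with row \(u\) and zero constant. Each elementary region is then exactly \(\phi^{-1}(S)\) for an explicit subset \(S\subseteq\Z\), and I would exhibit \(S\) as a linear (hence semilinear) set: for type (1), \(S=\{a\}=a+B^*\) with \(B=\varnothing\) is a single point; for type (3), \(S=\{n\in\Z\colon n>a\}=(a+1)+\{1\}^*\) is a ray; and for type (2), \(S=\{n\in\Z\colon n\equiv a \bmod b\}=a+\{b,-b\}^*\), using that the monoid generated by \(b\) and \(-b\) is all of \(b\Z\). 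Applying Corollary \ref{cor:semipreimage} to \(\phi\) and \(S\) then shows that each elementary region is effectively constructible and semilinear.

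With the elementary regions in hand, I would assemble the general case. A basic polyhedral set is a finite intersection of elementary regions, so iterating Theorem \ref{thm:intersectionsemi} produces an effectively constructible semilinear expression for it. A polyhedral set is a finite union of basic polyhedral sets, so Remark \ref{rem:semilinearunion} delivers an effectively constructible semilinear expression for the whole set, completing the argument.

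I do not expect a serious obstacle: the statement is essentially a bookkeeping exercise combining the earlier effective-construction results. The only point needing a little care is the correct translation of each one-dimensional constraint into an explicit linear subset of \(\Z\) that fits the template \(c+B^*\) with \(B\) finite — in particular recognising the congruence class \(a+b\Z\) as \(a+\{b,-b\}^*\).
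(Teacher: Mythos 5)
Your proof is correct, and it finishes exactly as the paper does --- assembling basic polyhedral sets via Theorem \ref{thm:intersectionsemi} and full polyhedral sets via Remark \ref{rem:semilinearunion} --- but it handles the key step, the elementary regions, by a different route. The paper disposes of them by citation: it asserts that the proof of Proposition 3.11 of \cite{CiobanuEvetts} expresses any elementary set as a semilinear set, so effectiveness rests on the reader checking that the external proof is constructive. You instead give a self-contained construction: each elementary region depends only on the scalar $u\cdot z$, hence is the preimage $\phi^{-1}(S)$ of an explicit one-dimensional semilinear set ($\{a\}=a+\emptyset^*$ for type (1), $(a+1)+\{1\}^*$ for type (3), and $a+\{b,-b\}^*$ for type (2), using that the submonoid of $(\Z,+)$ generated by $b$ and $-b$ is $b\Z$) under the integer affine map $\phi(z)=u\cdot z$, and then Corollary \ref{cor:semipreimage} yields effective constructibility. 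Note there is no circularity here, since Corollary \ref{cor:semipreimage} is established before this corollary and depends only on Lemma \ref{lem:semilinearimage} and Theorem \ref{thm:intersectionsemi}. Your route buys self-containedness and makes the effectiveness claim manifest from results already proved in this paper; the paper's route buys brevity and avoids duplicating work done elsewhere.
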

\begin{proof}
	The proof of Proposition 3.11 of \cite{CiobanuEvetts} shows that any elementary set can be expressed as a semilinear set. Polyhedral sets are finite unions of finite intersections of elementary sets so the result follows from Remark \ref{rem:semilinearunion} and Theorem \ref{thm:intersectionsemi}.
\end{proof}

We need a generalisation of the following Theorem of Ginsburg and Spanier.
\begin{theorem}[\cite{GinsburgSpanier} Theorem 6.2]\label{thm:complementsemiN}
	Let \(X\) and \(Y\) be semilinear subsets of \(\N^k\). Then \(X\setminus Y\) is semilinear, and effectively constructible from \(X\) and \(Y\).
\end{theorem}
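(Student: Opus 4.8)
The plan is to deduce everything from closure under intersection (Theorem~\ref{thm:intersectionsemi}) together with the effective polyhedral--semilinear dictionary (Proposition~\ref{prop:polysemi} and Corollary~\ref{cor:polytosemi}), after first reducing to the complement of a single linear set. Writing \(Y=\bigcup_i Y_i\) as a finite union of linear sets, we have \(X\setminus Y = X\cap\bigcap_i(\Z^k\setminus Y_i)\), so by Remark~\ref{rem:semilinearunion} and the effective intersection of Theorem~\ref{thm:intersectionsemi} it suffices to show that the complement of one linear set \(L=\cc+\{\bb_1,\ldots,\bb_n\}^*\) is effectively semilinear. I would carry out the computation in the ambient group \(\Z^k\) (so that Theorem~\ref{thm:intersectionsemi} and Corollary~\ref{cor:polytosemi} apply directly); since \(X\subseteq\N^k\) we have \(X\setminus Y = X\cap\bigcap_i(\Z^k\setminus Y_i)\), and intersecting the final answer with \(X\) returns a subset of \(\N^k\). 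An \(\N^k\)-semilinear expression for it is then recovered by the orthant decomposition of Corollary~\ref{cor:orthant}, since only the nonnegative orthant contributes and a monotone linear set lying in \(\N^k\) automatically has nonnegative base point and periods.

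The first substantive step is a decomposition lemma: every linear set \(\cc+\{\bb_1,\ldots,\bb_n\}^*\) is an \emph{effectively computable} finite union of linear sets whose period vectors are linearly independent over \(\Q\) (call these \emph{simple}). If the \(\bb_i\) are already independent there is nothing to do; otherwise one computes, by linear algebra over \(\Q\) followed by clearing denominators, a nontrivial integer relation \(\sum_{i\in I}a_i\bb_i=\sum_{j\in J}b_j\bb_j\) with \(a_i,b_j>0\) and \(I,J\) disjoint. Adding or subtracting this relation leaves the value of an \(\N\)-combination unchanged, so every element of the linear set can be rewritten so that some coefficient involved in the relation is bounded by a computable constant; splitting into the finitely many cases for that bounded coefficient peels off one period as a fixed offset and reduces the number of periods, and the lemma follows by induction on \(n\). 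I expect this effective decomposition into simple linear sets to be the main obstacle, as it is the only place where genuine combinatorial control (rather than linear algebra or Boolean bookkeeping) is required.

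Next I would show that a simple linear set \(L_s=\cc+\{\bb_1,\ldots,\bb_m\}^*\) (independent periods) is a \emph{basic polyhedral set} in the sense of Definition~\ref{def:PolSets}. By independence, each \(x\) with \(x-\cc\in\mathrm{span}_\Q(\bb_1,\ldots,\bb_m)\) has unique rational coordinates \(\lambda_i=\lambda_i(x)\), which are integer-affine functionals of \(x\). Thus \(x\in L_s\) is equivalent to the conjunction of: the \(k-m\) equations cutting out the subspace \(\mathrm{span}_\Q(\bb_i)\) (type (1)); the congruences asserting \(\lambda_i(x)\in\Z\), obtained by clearing denominators (type (2)); and the inequalities \(\lambda_i(x)\geq0\) (type (3)). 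Hence \(L_s\) is a finite intersection of elementary regions, and its complement \(\Z^k\setminus L_s\) is, by De Morgan, a finite union of complements of elementary regions. Each such complement is again polyhedral --- the complement of a type (1) set is a union of two type (3) sets, that of a type (3) set is a single type (3) set (over \(\Z\), \(\,u\cdot z\le a\) reads \(-u\cdot z> -a-1\)), and that of a type (2) set is a finite union of type (2) sets --- so \(\Z^k\setminus L_s\) is polyhedral, and therefore effectively semilinear by Corollary~\ref{cor:polytosemi}.

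Finally I would assemble the pieces: \(\Z^k\setminus L=\bigcap_s(\Z^k\setminus L_s)\) is effectively semilinear by Theorem~\ref{thm:intersectionsemi}, and feeding this into the reduction of the first paragraph yields an effective semilinear expression for \(X\setminus Y\). An alternative, heavier route would be to invoke the effective equivalence between semilinear and Presburger-definable sets and the effective quantifier elimination for Presburger arithmetic, where complementation is immediate; but the polyhedral argument above stays within the tools already developed in this section and keeps the effectivity transparent.
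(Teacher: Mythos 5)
Your argument is correct, but note that the paper does not actually prove this statement: Theorem \ref{thm:complementsemiN} is imported as a black box from Ginsburg and Spanier \cite{GinsburgSpanier} (their Theorem 6.2, a direct construction inside \(\N^k\)), and the paper's own work goes in the opposite direction, lifting the cited \(\N^k\) result to \(\Z^k\) (Theorem \ref{thm:complementsemi}) via the orthant partition, sign-flipping automorphisms, and Lemma \ref{lem:semisubsetN}. You instead prove complementation directly over \(\Z^k\): decompose each linear set into finitely many linear sets with \(\Q\)-linearly independent periods, observe that such a ``simple'' linear set is a basic polyhedral set (type (1) equations cutting out the span, type (2) congruences for integrality of coordinates, type (3) inequalities for nonnegativity), complement elementary regions by De Morgan, and close up using Corollary \ref{cor:polytosemi} and Theorem \ref{thm:intersectionsemi}, restricting to \(\N^k\) at the end. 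The steps check out, and the route is non-circular with respect to the paper's architecture, since Theorem \ref{thm:intersectionsemi} and Corollaries \ref{cor:polytosemi} and \ref{cor:orthant} are proved without any use of complementation; in particular your relation-rewriting induction for the independent-period decomposition is sound (the sum of coefficients over \(I\) strictly decreases, so some coefficient involved in the relation becomes bounded and can be frozen, reducing the number of periods). Comparing what the two routes buy: the paper's citation keeps Section \ref{sec:semilinear} short and rests on a classical constructive reference, whereas your proof is self-contained in the paper's own toolkit and is in fact stronger than asked --- it establishes the \(\Z^k\) complementation of Theorem \ref{thm:complementsemi} directly, rendering the orthant/automorphism lifting argument redundant. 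Two economies are available: your decomposition lemma is precisely (a non-disjoint version of) Lemma \ref{lem:linearindependence}, which the paper later cites from Eilenberg--Sch\"utzenberger \cite{RationalSets}, and since the input sets here have periods in \(\N^k\) you could invoke that lemma verbatim instead of reproving it; and your closing appeal to Corollary \ref{cor:orthant} can be replaced by the one-line observation of Lemma \ref{lem:semisubsetN} that any linear set contained in \(\N^k\) automatically has base point and periods in \(\N^k\).
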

We will make use of the following elementary lemma.
\begin{lemma}\label{lem:semisubsetN}
	Suppose that \(U\) is a semilinear subset of \(\Z^k\) and \(U\subset\N^k\). Then \(U\) is a semilinear subset of \(\N^k\).
\end{lemma}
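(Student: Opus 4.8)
The plan is to reduce to the case of a single linear set and then observe that a linear subset of \(\Z^k\) which happens to lie inside \(\N^k\) must already have all of its defining data---base point and period vectors---lying in \(\N^k\). So write \(U=\bigcup_{i=1}^d(a_i+B_i^*)\) as a semilinear subset of \(\Z^k\), with \(a_i\in\Z^k\) and each \(B_i\subset\Z^k\) finite. The key point is that each constituent linear set satisfies \(a_i+B_i^*\subseteq U\subseteq\N^k\), so it suffices to prove the following: any linear set \(a+B^*\subseteq\N^k\) (with \(a\in\Z^k\), \(B\subset\Z^k\) finite) is in fact a linear subset of \(\N^k\), that is, \(a\in\N^k\) and \(B\subseteq\N^k\).

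For the base point, taking all multiplicities equal to zero gives \(a=a+0\in a+B^*\subseteq\N^k\), so \(a\in\N^k\). For the period vectors, fix \(b\in B\); since each generator may be used with arbitrary multiplicity, \(a+nb\in a+B^*\subseteq\N^k\) for every \(n\in\N\). Reading this off in the \(j\)th coordinate, \(e_j\cdot a + n\,(e_j\cdot b)\geq 0\) for all \(n\in\N\), which forces \(e_j\cdot b\geq 0\); hence \(b\in\N^k\), and so \(B\subseteq\N^k\). Consequently the very same expression \(U=\bigcup_{i=1}^d(a_i+B_i^*)\) now exhibits \(U\) as a semilinear subset of \(\N^k\). (In particular, if we had started from an effectively constructible \(\Z^k\)-expression, the same data is effectively constructible over \(\N^k\).)

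The one thing I would flag is the temptation to first pass through an orthant decomposition via \cref{cor:orthant}; this turns out to be unnecessary, because each linear piece is automatically contained in the union \(U\), hence in \(\N^k\), with no further work. The only genuine idea in the argument is the ``iterate to infinity'' observation: a negative coordinate in any period vector would eventually drive the corresponding coordinate of \(a+nb\) below zero, contradicting containment in \(\N^k\). This is the entire content of the lemma, and I do not anticipate any real obstacle beyond making that observation precise.
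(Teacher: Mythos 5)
Your proof is correct and follows essentially the same route as the paper's: the base point lies in \(\N^k\) because \(0\in B^*\), and each period vector must lie in \(\N^k\) since otherwise a large multiple \(a+nb\) would leave \(\N^k\). The only difference is presentational --- you make explicit the reduction from a semilinear union to its constituent linear pieces, which the paper's (very terse) proof leaves implicit.
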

\begin{proof}
	First, suppose that \(U\) is linear. We have \(U=a+B^*\) for some \(a\in\Z^k\) and finite \(B\subset\Z^k\). If \(U\subset\N^k\) then since \(0\in B^*\) we have \(a\in\N^k\). Furthermore, if any \(b\in B\) was not in \(\N^k\), a sufficiently large multiple of \(b\) would give an element \(a+nb\in U\setminus\N^k\), and thus \(B\subset\N^k\), making \(U\) a linear subset of \(N^k\). Now suppose that \(U\) is semilinear and contained in \(\N^k\). So all of its finitely many linear components also satisfy the hypothesis, and are hence linear subsets of \(\N^k\), making \(U\) a semilinear subset of \(\N^k\).
\end{proof}

\begin{theorem}\label{thm:complementsemi}
	Let \(X\) and \(Y\) be semilinear subsets of \(\Z^k\). Then \(X\setminus Y\) is semilinear, and effectively constructible from \(X\) and \(Y\).
\end{theorem}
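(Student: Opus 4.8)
The strategy is to reduce the computation of set differences in $\Z^k$ to the corresponding computation in $\N^k$, where we may invoke Theorem \ref{thm:complementsemiN} of Ginsburg and Spanier. The bridge between the two settings is the orthant decomposition made available by \cref{lem:Zkpartition} and \cref{cor:orthant}: every semilinear subset of $\Z^k$ can be effectively written as a finite disjoint union of monotone semilinear pieces, one inside each of the $2^k$ fixed orthants $Q_i$. The key observation is that within a single orthant all coordinates carry a fixed sign pattern, so that orthant is, up to the obvious coordinate-wise sign flip, an isometric copy of $\N^k$; intersecting $X$ and $Y$ with one orthant and applying the sign flip lands us squarely in $\N^k$.

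Concretely, first I would apply \cref{cor:orthant} to both $X$ and $Y$, obtaining effectively constructible decompositions $X=\bigsqcup_i (X\cap Q_i)$ and $Y=\bigsqcup_i(Y\cap Q_i)$ into monotone semilinear sets, using the \emph{same} fixed partition $\{Q_i\}$ from \cref{def:Qi}. Because the $Q_i$ are pairwise disjoint and cover $\Z^k$, the difference splits orthant by orthant as
\begin{equation*}
	X\setminus Y = \bigsqcup_{i=1}^{2^k}\bigl((X\cap Q_i)\setminus(Y\cap Q_i)\bigr),
\end{equation*}
so by \cref{rem:semilinearunion} it suffices to compute each difference of monotone semilinear sets sitting inside a single orthant $Q_i$. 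Fix such an $i$ and let $\sigma_i\colon\Z^k\to\Z^k$ be the integer affine (indeed linear) automorphism that flips the signs of exactly those coordinates which are non-positive throughout $Q_i$; then $\sigma_i(Q_i)\subseteq\N^k$. Applying $\sigma_i$ via \cref{lem:semilinearimage}, the sets $\sigma_i(X\cap Q_i)$ and $\sigma_i(Y\cap Q_i)$ are effectively constructible semilinear subsets of $\Z^k$ contained in $\N^k$, hence by \cref{lem:semisubsetN} they are semilinear subsets of $\N^k$. Now Theorem \ref{thm:complementsemiN} computes their difference as a semilinear subset of $\N^k$, and pushing back through the inverse automorphism $\sigma_i^{-1}$ (again by \cref{lem:semilinearimage}) recovers $(X\cap Q_i)\setminus(Y\cap Q_i)$ as an effectively constructible semilinear set, since $\sigma_i$ is a bijection and therefore commutes with taking set differences. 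Taking the union over $i$ finishes the construction.

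The main obstacle is bookkeeping rather than conceptual: one must check that the sign-flip map $\sigma_i$ genuinely carries $Q_i$ into $\N^k$ and that the passage through $\sigma_i$ and $\sigma_i^{-1}$ preserves set differences, which hinges on $\sigma_i$ being a bijection on all of $\Z^k$. The slightly delicate point is that, although a monotone semilinear set lives inside $\N^k$ after the flip, it is a priori only given to us as a semilinear subset of $\Z^k$; the role of \cref{lem:semisubsetN} is precisely to promote it to a genuine semilinear subset of $\N^k$ so that the hypotheses of Theorem \ref{thm:complementsemiN} are met. Once these compatibility checks are in place, every step in the chain---orthant decomposition, the two applications of \cref{lem:semilinearimage}, and the invocation of Theorem \ref{thm:complementsemiN}---is individually effective, so the composite construction is effective as well.
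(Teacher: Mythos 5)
Your proof is correct and follows essentially the same route as the paper: both intersect $X$ and $Y$ with the fixed orthant partition $\{Q_i\}$, use a coordinate sign-flip automorphism together with Lemma \ref{lem:semisubsetN} to transfer each piece into $\N^k$, invoke Theorem \ref{thm:complementsemiN} there, and pull back and take unions. The only cosmetic difference is that you cite Corollary \ref{cor:orthant} where the paper applies Theorem \ref{thm:intersectionsemi} directly, but these amount to the same step.
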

\begin{proof}
	First, note that any automorphism of \(\Z^n\) defined by sending each basis vector to either itself or its inverse is an integer affine map and hence if \(U\subset\Z^k\) is semilinear then its image under such an automorphism is also semilinear, and effectively constructible from \(U\).

	Suppose that \(X,Y\subset\Z^k\) are semilinear. Theorem \ref{thm:intersectionsemi} implies that, for any \(Q=Q_i\) of Definition \ref{def:Qi}, \(Q\cap X\) and \(Q\cap Y\) are semilinear and effectively constructible. There is an automorphism \(\theta\in\Aut(\Z^k)\) taking \(Q\) into \(\N^k\subset\Z^k\), and by the previous paragraph \(\theta(Q\cap X)\) and \(\theta(Q\cap Y)\) are semilinear and effectively constructible. By Lemma \ref{lem:semisubsetN}, these sets are semilinear subsets of \(\N^k\) itself, and so Theorem \ref{thm:complementsemiN} implies that \(\theta(Q\cap X)\setminus\theta(Q\cap Y)\) is semilinear and effectively constructible. Applying \(\theta^{-1}\) then gives that \((Q\cap X)\setminus(Q\cap Y)=Q\cap(X\setminus Y)\) is semilinear and effectively constructible. Finally, \(X\setminus Y\) is semilinear and effectively constructible as the union of finitely many intersections \(Q\cap (X\setminus Y)\).
\end{proof}

\section{Computing geodesics in virtually abelian groups}\label{sec:geodesics}



The set of all geodesics in a virtually abelian group has been analysed from the point of view of language theory and growth by several authors \cite{NeumannShapiro95, NeumannShapiro97}, with the results often using particular generating sets. Most recently, geodesics were studied with respect to all generating sets in \cite{Bishop}. Here we consider a subset of the entire set of geodesics, that is, we describe how to compute a \emph{geodesic normal form}, that is, one geodesic word per group element, for a finitely generated virtually abelian group with respect to any choice of (weighted) generating set. We formally define the notion of weight
below.

\begin{definition}
	Let \(G\) be a group with finite generating set \(\Sigma\) which has a weight function \(\w\colon\Sigma\to\N\setminus\{0\}\). This weight function extends to elements \(g\in G\) in the natural way:
\begin{equation*}\label{weight}
	\w(g) = \min\left\{\sum_{i=1}^k \w(\sigma_i) : \sigma=\sigma_1\cdots\sigma_k=_G g, \sigma_i \in \Sigma \right\}.
\end{equation*}
A word that has minimal weight amongst all representatives of the same group element will be called \(\emph{geodesic}\).
\end{definition}

In \cite{Benson}, Benson proved that every virtually abelian group admits a geodesic normal form (with respect to any weighted generating set) that can be described using polyhedral sets, which are semilinear by Proposition \ref{prop:polysemi}. In this section, we show that the semilinear sets in question are effectively constructible. As a consequence we see, in Corollary \ref{cor:growth}, that the growth series of the group can be effectively calculated.

\begin{theorem}\label{thm:geodesicconstruction}
	Let \(G\) be a finitely generated virtually abelian group, with a description as in Definition \ref{def:VA}. Let \(\Sigma\) be any finite generating set for \(G\), with elements of \(\Sigma\) given in subgroup-transversal normal form, equipped with a weight function \(\w\).

	Then there exists a geodesic normal form \(\mathcal{U}\subset\Sigma^*\) with a finite partition into subsets \(U\), each equipped with an injective map \(\phi\colon U\to \N^m\) (for some \(m\) depending on \(U\)) such that each image \(\phi(U)\) is semilinear and effectively constructible (from the group description). Furthermore, the weights of the elements \(u\in U\) can be easily calculated from their images \(\phi(u)\). 
\end{theorem}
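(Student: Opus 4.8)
The plan is to work coordinate by coordinate using the subgroup–transversal structure. Every element of $G$ has a unique normal form word of the shape $b_1^{z_1}\cdots b_n^{z_n}t$ with $z_i\in\Z$ and $t\in T$, so $G$ is in natural bijection with $\Z^n\times T$. However, the normal form $\eta$ is almost never geodesic with respect to an arbitrary weighted generating set $\Sigma$, so the geodesic normal form $\mathcal U$ must be constructed separately. The first step is to fix a coset $At$ (equivalently, an element $\delta\in\Delta$) and restrict attention to those group elements lying in this coset; since $T$ is finite there are only $|T|$ such cosets, and the final partition of $\mathcal U$ will refine this partition into cosets. Within a fixed coset I identify the relevant group elements with the lattice $\Z^n$ via the $A$-coordinates.

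**The main construction** proceeds as follows. For a fixed transversal element $t$, and for each way of writing a geodesic as a word over $\Sigma$, the weight of that word depends linearly on the multiplicities with which each generator is used, together with the discrete ``twist'' data recording how the transversal parts multiply. The key idea, due to Benson, is that the set of $A$-coordinates $\mathbf z\in\Z^n$ realising a given geodesic weight, and using a fixed ``pattern'' of generators, is cut out by finitely many affine-linear equalities, congruences, and inequalities — that is, it is a polyhedral set in the sense of Definition \ref{def:PolSets}. Concretely, I would stratify the elements of each coset according to which generators appear in a chosen geodesic representative and in what linear combination; each stratum is an intersection of elementary regions (type (1) equalities fixing the $A$-projection, type (2) congruences recording the finite transversal bookkeeping, and type (3) inequalities selecting the minimal-weight representatives over competing patterns). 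By Proposition \ref{prop:polysemi} each such polyhedral set is semilinear, and by Corollary \ref{cor:polytosemi} a semilinear expression is effectively constructible from the polyhedral description. Taking the disjoint-union refinement of these strata across all $t\in T$ and all finitely many patterns yields the finite partition $\mathcal U=\bigsqcup U$; the map $\phi\colon U\to\N^m$ records, for each $u\in U$, the generator multiplicities (which are non-negative integers), so that $\phi$ is injective on each stratum and the weight $\w(u)$ is literally a fixed non-negative integer linear combination of the coordinates of $\phi(u)$, giving the ``easily calculated'' clause.

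**The effectivity** is where Section \ref{sec:semilinear} does the real work: I must argue that the polyhedral (equivalently, semilinear) descriptions are not merely existent but computable from the group description of Definition \ref{def:VA}. This is exactly what Corollary \ref{cor:polytosemi} and Theorem \ref{thm:intersectionsemi} supply — intersections of elementary regions can be converted to explicit semilinear expressions algorithmically. The one genuinely subtle point, and the step I expect to be the main obstacle, is establishing the inequalities that single out a \emph{geodesic normal form} (one representative per element) rather than the full geodesic language. To guarantee minimality of weight I must compare, for each candidate coordinate vector, the weight of each generator pattern against all finitely many competing patterns that could represent the same coset element; this comparison is a finite collection of type-(3) affine inequalities, but verifying that these inequalities genuinely carve out a \emph{minimal-weight} representative (and that ties are broken consistently to give exactly one word per element) requires the finiteness of the relevant generator patterns. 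That finiteness is where Dickson's Lemma (Lemma \ref{lem:Dickson}) and the effective minimal-solution machinery of Lemma \ref{lem:integerGS} enter: the minimal generator patterns form a finite, effectively constructible set, which bounds the comparison and keeps the whole partition finite and computable. Once this finiteness and the consistent tie-breaking are in place, the remaining verification — that each $\phi(U)$ is semilinear and effectively constructible and that weights read off linearly — follows directly from the results of Section \ref{sec:semilinear}.
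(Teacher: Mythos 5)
Your overall architecture (stratify by coset and by ``pattern'', cut each stratum out by polyhedral conditions, convert to semilinear sets effectively via Section \ref{sec:semilinear}) is the same Benson-style route the paper takes, but there is a genuine gap at the step your whole stratification rests on: the finiteness of the set of patterns. Working with the original generating set \(\Sigma\), a geodesic word may contain arbitrarily many occurrences of generators lying outside \(A\), and since \(G\) is non-abelian the \emph{arrangement} of those letters matters, not just their multiplicities; so ``which generators appear and in what linear combination'' does not index the strata by a finite set, and the coordinate vector \(\phi(u)\) of multiplicities does not determine the group element. The paper closes this hole with the \emph{extended generating set} \(S\) of Definition \ref{def:extendedgenset} (all products of at most \([G\colon\Z^n]\) generators, with inherited weights): a pigeonhole argument on cosets shows every element then has a minimal-weight representative that is a \emph{patterned word} (Definition \ref{def:patterns}), i.e.\ blocks of \(\Z^n\)-generators interleaved with a pattern of length at most \([G\colon\Z^n]\), so only finitely many patterns occur. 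Your proposed substitute for this finiteness --- Dickson's Lemma \ref{lem:Dickson} and Lemma \ref{lem:integerGS} --- does not work: those results concern minimal elements of subsets of \(\N^k\) for a \emph{fixed} \(k\), whereas the objects you need to bound are words (non-commuting arrangements) of unbounded length; in the paper those lemmas are used only inside the semilinear intersection/complement machinery of Section \ref{sec:semilinear}, not to bound patterns.

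A secondary incompleteness: you assert that ``ties are broken consistently to give exactly one word per element'' but give no mechanism, and this is where a real construction is needed. The paper does it in two layers: within a fixed pattern \(\pi\), the weight functional \(A_{n+1}^\pi\) is extended by standard basis vectors to a full-rank family (Definition \ref{def:ordering}), yielding a total order \(\leq_\pi\) on words representing the same element, whose minimal elements form \(V^\pi\) with \(\phi_\pi(V^\pi)\) effectively semilinear by a translation-set argument plus Theorem \ref{thm:complementsemi}; then cross-pattern duplicates are removed by intersecting with the explicitly defined polyhedral sets \(\Theta,\Theta_*\) and taking effectively constructible set differences. Both layers rely on complementation of semilinear subsets of \(\Z^k\) (Theorem \ref{thm:complementsemi}), which your sketch never invokes --- inequalities of type (3) alone, as in your proposal, select minimal-weight words but cannot produce a \emph{unique} representative per element.
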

In Section \ref{sec:length} we will see that Theorem \ref{thm:geodesicconstruction} allows us to solve equations with length constraints. To prove Theorem \ref{thm:geodesicconstruction}, we follow the ideas of \cite{Benson} and set-up of \cite{Evetts, CiobanuEvetts} to construct the geodesics, but ensure that each step is effective.



\begin{definition}\label{def:extendedgenset}
	Let \(G\) be virtually abelian with description as in Definition \ref{def:VA}, and write \(\Z^n\) for the normal free abelian subgroup of finite index. Define the \emph{extended generating set} \[S=\{s_1s_2\cdots s_k : s_i\in\Sigma,~1\leq k\leq [G :\Z^n]\},\] where the weight of an element of \(S\) is \(\w(s)=\sum_i\w(s_i)\), and extend this to a weight on the group as in (\ref{weight}). Observe that the weight with respect to \(S\) is equal to the weight with respect to \(\Sigma\). We work with \(S\) from now on. Words in \(S^*\) that have minimal weight amongst all words that represent the same group element will be called \emph{geodesic}.
\end{definition}

\begin{definition}[Patterns and patterned words]\label{def:patterns}
	Let \(G\) be as in the statement of Theorem \ref{thm:geodesicconstruction}, with \(\Z^n\) and \(S\) as in Definition \ref{def:extendedgenset}.
	\begin{enumerate}
		\item Define \(X=S\cap\Z^n=\{x_1,\ldots,x_r\}\) and \(Y=S\setminus X=\{y_1,\ldots,y_s\}\) (membership of \(\Z^n\) can be determined by repeated applications of the function \(f\) of Definition \ref{def:VA}). Let \(P=\{\pi\in Y^* : |\pi|_S\leq [G :\Z^n]\}\) be a finite set that we call the set of \emph{patterns}.
		\item For each pattern \(\pi=\pi_1\pi_2\cdots\pi_k\in P\), define the set of \emph{patterned words}
		\[W^\pi = \left\{x_1^{w_1}\cdots x_r^{w_r}\pi_1 x_1^{w_{r+1}}\cdots x_r^{w_{2r}}\pi_2\cdots\pi_k x_1^{w_{kr+1}}\cdots x_r^{w_{kr+r}} : w_i\in\N \right\}\subset S^*.\]
		\item For each \(W^\pi\), define the bijection \(\phi_\pi\colon W^{\pi}\to\N^{kr+r}\) which records the powers of the generators from \(X\):
		\[\phi_\pi\colon x_1^{w_1}\cdots x_r^{w_r}\pi_1 x_1^{w_{r+1}}\cdots x_r^{w_{2r}}\pi_2\cdots\pi_k x_1^{w_{kr+1}}\cdots x_r^{w_{kr+r}} \mapsto \begin{pmatrix} w_1 \\ w_2 \\ \vdots \\ w_{kr+r}\end{pmatrix}\in\N^{kr+r}.\] For brevity, write \(m(\pi)=kr+r\).
	\end{enumerate}
\end{definition}
The following lemma ensures that our set of patterns $P$ is sufficient to provide geodesics for every element of $G$.
\begin{lemma}[Proposition 11.3 of \cite{Benson}]\label{lem:finiteP}
	For each element $g\in G$, there exists some $\pi\in P$ such that $W^\pi$ contains a geodesic representative for $g$.
\end{lemma}

We now construct matrices, vectors, and integer affine maps that will allow us to move between patterned words and their corresponding subgroup-transversal normal forms.
\begin{definition}[Structure constants]\label{def:structureconstants}
	Form the \(n\times r\) matrix \[Z = \begin{pmatrix} \vert & \vert & & \vert \\ x_1 & x_2 & \cdots & x_r \\ \vert & \vert & & \vert \end{pmatrix}\] whose columns are the elements of \(X\) (expressed as column vectors in \(\Z^n\)). For \(y_k\in Y\) and a standard basis vector \(e_i\in\Z^n\), normality gives \(y_ke_iy_k^{-1}\in\Z^n\). Use the function \(f\) of Definition \ref{def:VA} to calculate each conjugate in terms of standard basis vectors. For each \(y_k\in Y\), form the \(n\times n\) matrix
	\[\Gamma_k = \begin{pmatrix}  \vert & \vert & & \vert \\ y_ke_1y_k^{-1} & y_ke_2y_k^{-1} & \cdots & y_ke_ny_k^{-1} \\ \vert & \vert & & \vert \end{pmatrix}.\]

	For each \(\pi=y_{i_1}y_{i_2}\cdots y_{i_k}\in P\), construct the \(n\times m(\pi)\) matrix
	\[\left(Z\ \vert\ \Gamma_{i_1}Z\ \vert\ \cdots\ \vert\ \Gamma_{i_1}\Gamma_{i_2}\cdots\Gamma_{i_k}Z\right)\] and write \(A_i^\pi\) for its \(i\)th row, so that we define \(n\) vectors in \(\Z^{m(\pi)}\), for each pattern.

	Each element of $G$ can be written uniquely as an element of the product \(\Z^n\cdot T\), where \(T\) is the transversal coming from the description of \(G\). For a fixed pattern \(\pi\), normality of \(\Z^n\) implies that every word in \(W^\pi\) represents an element of the coset \(\Z^n\pi\). Write \(t_\pi\in T\) for the corresponding representative (which can be calculated from the description of \(G\)), and define \(B_1^\pi,\ldots,B_n^\pi\in\Z\) so that the word \(\pi\) represents the group element \((B_1^\pi,\ldots,B_n^\pi)^\intercal t_\pi\). Thus for \(w\in W^\pi\), we have
	\[w =_G \left\{\begin{pmatrix} A_1^\pi\cdot\phi_\pi(w) \\ \vdots \\ A_n^\pi\cdot\phi_\pi(w) \end{pmatrix} + \begin{pmatrix} B_1^\pi \\ \vdots \\ B_n^\pi \end{pmatrix}\right\} t_\pi,\]
	which is in subgroup transversal normal form.

	Furthermore, let \(A_{n+1}^\pi\in\N^{m(\pi)}\) record the weights of the \(x_i\)s, repeating the sequence \(k+1\) times,
	\[A_{n+1}^\pi = \left(\w(x_1),\ldots,\w(x_r),\w(x_1),\ldots,\w(x_r),\ldots,\w(x_1),\ldots,\w(x_r)\right),\]
	and write \(B_{n+1}^\pi=\w(\pi)\). So we have \(\w(w)=A_{n+1}^\pi\cdot\phi_\pi(w)+B_{n+1}^\pi\) for \(w\in W^\pi\).	Define integer affine maps \(\mathcal{E}^\pi\colon\phi_\pi(W^\pi)\to\Z^{n+1}\) via \[\mathcal{E}^\pi\colon x\mapsto\begin{pmatrix} A_1^\pi\cdot x+B_1^\pi \\ \vdots \\ A_{n+1}^\pi\cdot x + B_{n+1}^\pi\end{pmatrix}.\]
\end{definition}

\begin{definition}[Ordering]\label{def:ordering}
	We define an order on each \(W^\pi\) which is compatible with the weight, as follows. Extend the set \(\{A_1^\pi,\ldots,A_{n+1}^\pi\}\) by choosing standard basis vectors \(A_{n+2}^\pi,\ldots,A_{K}^\pi\) so that \(\{A_1^\pi,\ldots,A_K^\pi\}\) has \(\R\)-rank equal to \(m(\pi)\).

	Then define an order on \(W^\pi\) as:
	\(v\leq_\pi w\) if and only if either \(v=w\) or there exists \(k\in[1,K]\) such that \(A_i^\pi\cdot\phi_\pi(v)=A_i^\pi\cdot\phi_\pi(w)\) for \(i<k\) and \(A_k^\pi\cdot\phi_\pi(v)<A_k^\pi\cdot\phi_\pi(w)\). If restricted to any subset of \(W^\pi\) representing a single group element, this is a total order, and indeed a well-order (the proof is straightforward and can be found in \cite{Benson}).
\end{definition}
Now we reduce the set \(W^\pi\) of all \(\pi\)-patterned words to a set \(V^\pi\) of only those words which are minimal-weight element representatives (amongst \(W^\pi\)).
\begin{definition}
	Define \[V^\pi = \{v\in W^\pi : \text{if }w\in W^\pi\text{ and }v=_Gw\text{ then }v\leq_\pi w\}.\]
\end{definition}
The following lemma demonstrates that we can construct each \(V^\pi\) as a semilinear set. The proof is a modification of that of Proposition 6.2 of \cite{Benson} to make it constructive (see also Proposition 4.7 of \cite{Evetts}).
\begin{lemma}\label{lem:translations}
	Let \(V^\pi=\{v\in W^\pi : \text{if }w\in W^\pi\text{ and }v=_Gw\text{ then }v\leq_\pi w\}\) be as above and $\phi_\pi$ as in Def. \ref{def:patterns} (3). Then \(\phi_\pi(V^\pi)\subset \N^{m(\pi)}\) is an effectively constructible semilinear set.
\end{lemma}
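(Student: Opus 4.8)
The plan is to transport the whole question into $\N^m$ via the bijection $\phi_\pi$, and to exhibit $\phi_\pi(V^\pi)$ as the complement of an effectively constructible semilinear set, leaning entirely on the toolkit of \cref{sec:semilinear}. First I would record the algebraic translation supplied by \cref{def:structureconstants}: every word in $W^\pi$ represents an element of the coset $\Z^n\pi$ and so shares the transversal representative $t_\pi$. Hence two patterned words $v,w\in W^\pi$ are equal in $G$ if and only if they have the same $\Z^n$-part, i.e. $A_i^\pi\cdot\phi_\pi(v)=A_i^\pi\cdot\phi_\pi(w)$ for all $1\le i\le n$. Equivalently, writing $x=\phi_\pi(v)$, the difference $\phi_\pi(w)-x$ lies in the lattice $\Lambda=\{d\in\Z^m\colon A_i^\pi\cdot d=0,\ 1\le i\le n\}$.

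Next I would translate the order $\leq_\pi$ of \cref{def:ordering}. For $d\in\Lambda$ the functionals $A_1^\pi,\ldots,A_n^\pi$ all annihilate $d$, so comparing two words representing the same element reduces to comparing the values $A_{n+1}^\pi\cdot d,\ldots,A_K^\pi\cdot d$ lexicographically; since $\{A_1^\pi,\ldots,A_K^\pi\}$ has $\R$-rank $m$, these remaining functionals separate the nonzero points of $\Lambda$, so each nonzero $d\in\Lambda$ is unambiguously ``positive'' or ``negative''. I would collect the negative (reduction) directions as
\[
\Lambda^- \;=\; \bigcup_{k=n+1}^{K}\bigl\{d\in\Z^m\colon A_i^\pi\cdot d=0\ (1\le i<k),\ A_k^\pi\cdot d<0\bigr\}.
\]
Each piece is a basic polyhedral set, so $\Lambda^-$ is polyhedral, hence semilinear and effectively constructible by \cref{prop:polysemi} and \cref{cor:polytosemi}. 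With this notation, $x\notin\phi_\pi(V^\pi)$ precisely when a strictly $\leq_\pi$-smaller equal word exists, i.e. when there is $d\in\Lambda^-$ with $x+d\in\N^m$.

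It then remains to build the ``reducible'' set $N=\{x\in\N^m\colon \exists\, d\in\Lambda^-,\ x+d\in\N^m\}$ effectively and take its complement. I would form in $\Z^{2m}$ the set $\Omega=(\N^m\times\Z^m)\cap(\Z^m\times\Lambda^-)\cap\{(x,d)\colon x+d\in\N^m\}$; the last factor is the preimage of $\N^m$ under the integer affine map $(x,d)\mapsto x+d$, hence semilinear and effectively constructible by \cref{cor:semipreimage}, and the triple intersection is handled by \cref{thm:intersectionsemi}. Projecting $\Omega$ to its first $m$ coordinates is an integer affine map, so $N$ is semilinear and effectively constructible by \cref{lem:semilinearimage}. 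Finally $\phi_\pi(V^\pi)=\N^m\setminus N$ is semilinear and effectively constructible by \cref{thm:complementsemi}, which proves the lemma.

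The step I expect to be the real obstacle is the existential quantifier hidden in reducibility: $\Lambda^-$ is infinite, so one cannot simply forbid finitely many configurations by hand. This is exactly where the effective apparatus of \cref{sec:semilinear} — image under affine maps, intersection, preimage, and complement, all ultimately resting on \cref{lem:integerGS} and \cref{lem:Dickson} — does the work, turning a projection over infinitely many reduction directions into a finite semilinear description. The two points demanding genuine care are the reduction of equality in $G$ to equality of $\Z^n$-parts (using the shared transversal $t_\pi$ and normality of $\Z^n$), and the separation property of $A_{n+1}^\pi,\ldots,A_K^\pi$ on $\Lambda$ coming from the $\R$-rank condition of \cref{def:ordering}, which is precisely what makes the sign of a reduction direction well defined.
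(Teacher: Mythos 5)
Your proof is correct, and it reaches the conclusion by a genuinely different mechanism at the crucial step. Both you and the paper begin identically: equality in \(G\) of two \(\pi\)-patterned words is equivalent to equality of their \(\Z^n\)-parts (i.e.\ the difference of their \(\phi_\pi\)-images is annihilated by \(A_1^\pi,\ldots,A_n^\pi\)), and the set of strict reduction directions is a polyhedral, hence effectively semilinear, subset of \(\Z^m\) (your \(\Lambda^-\) is exactly the negative of the paper's \(\mathcal{T}^\pi\)). Where you diverge is in eliminating the existential quantifier ``there exists a reduction direction landing in \(\N^m\)''. The paper refines the semilinear expression \(\mathcal{T}^\pi=\bigcup_i(a_i+B_i^*)\) using Corollary~\ref{cor:orthant} so that each \(B_i\) lies in the same orthant as \(a_i\), and then proves by a hands-on sign analysis that reducibility by \emph{some} \(\tau\in\mathcal{T}^\pi\) is equivalent to reducibility by one of the finitely many base points \(a_i\); this yields the very concrete description \(\phi_\pi(V^\pi)=\N^m\setminus\bigcup_{i=1}^r(a_i+\N^m)\) and needs only complementation beyond that. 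You instead encode the quantifier as a projection: you build the relation \(\Omega\subset\Z^{2m}\) of pairs \((x,d)\) with \(x\in\N^m\), \(d\in\Lambda^-\), \(x+d\in\N^m\) using Corollary~\ref{cor:semipreimage} and Theorem~\ref{thm:intersectionsemi}, take its image under the affine projection (Lemma~\ref{lem:semilinearimage}), and complement (Theorem~\ref{thm:complementsemi}). Your route is more modular and mechanical --- it avoids the somewhat delicate same-orthant case analysis entirely --- at the cost of invoking heavier closure machinery (preimage, product, intersection in doubled dimension); the paper's route buys a simpler explicit final expression (the complement of finitely many translates of \(\N^m\)) from which, for instance, growth computations are more transparent. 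Both arguments rest on the same effective toolkit of Section~\ref{sec:semilinear}, and your identification of the two genuinely delicate points (equality in \(G\) via the shared transversal \(t_\pi\), and well-definedness of the sign of a direction via the \(\R\)-rank condition) matches what the paper's proof implicitly relies on.
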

\begin{proof}
	Let $m(\pi)$ be as in Definition \ref{def:patterns} (3), and $K$ as in Definition \ref{def:ordering}. Define
	\begin{equation*}
		\mathcal{T}^\pi=\bigcup_{i=1}^{K-n}\left(\left(\bigcap_{j=1}^{n+i-1}\{\tau\in\Z^{m(\pi)} : A_j^\pi\cdot\tau=0\}\right)\cap\{\tau\in\Z^{m(\pi)} :  A_{n+i}^\pi\cdot\tau>0\}\right).
	\end{equation*}
	It is straightforward to show that for \(v,w\in W^\pi\), \(v\) and \(w\) represent the same group element with \(w\leq_\pi v\) if and only if there exists \(\tau\in\mathcal{T}^\pi\) with \(\phi_\pi(v)=\phi_\pi(w)+\tau\) (see Section 6 of \cite{Benson}). The set \(\mathcal{T}^\pi\) is clearly polyhedral and so, by Corollary \ref{cor:polytosemi}, we can construct a semilinear expression for it, say \(\mathcal{T}^\pi=\bigcup_{i=1}^r(a_i+B_i^*)\), for some \(a_i\in\Z^{m(\pi)}\), and finite sets \(B_i\subset\Z^{m(\pi)}\). Note also that by Corollary \ref{cor:orthant} we may assume that for each \(i\), each element of \(B_i\) is in the same orthant as \(a_i\). We claim that \(\phi_\pi(V^\pi)=\N^{m(\pi)}\setminus\bigcup_{i=1}^r(a_i+\N^{m(\pi)})\). By Theorem \ref{thm:complementsemi} we can then express this complement as a semilinear set, finishing the proof.

	To see the claim, first suppose that \(v\in V^\pi\), but also \(\phi_\pi(v)\in a_i+\N^{m(\pi)}\) for some \(i\). So \(\phi_\pi(v)=a_i+\phi_\pi(w)\) for some \(w\in\N^{m(\pi)}\), and therefore \(v\) and \(w\) represent the same group element with \(w\leq_\pi v\), but this contradicts the assumption that \(v\in V^\pi\).

	Conversely, suppose we have \(v\in W^\pi\) with \(\phi_\pi(v)\in \N^{m(\pi)}\setminus\bigcup_{i=1}^r(a_i+\N^{m(\pi)})\) but \(v\notin V^\pi\). So there exists \(\tau\in\mathcal{T}^\pi\), \(v_0\in V^\pi\), with \(\phi_\pi(v)=\phi_\pi(v_0)+\tau\) (and \(v,v_0\) representing the same group element). We have \(\tau=a_i+b\) for some \(i\) and some \(b\in B_i^*\), and so \(\phi_\pi(v)=a_i+b+\phi_\pi(v_0)\). We claim that \(\phi_\pi(v)-a_i\in\N^{m(\pi)}\), so that \(\phi(v)\in a_i+\N^{m(\pi)}\), which is a contradiction. Since \(a_i\) and \(b\) are in the same orthant, we have \(e_j\cdot a_i\leq e_j\cdot(a_i+b)\) for each standard basis vector \(e_j\). In the case \(e_j\cdot a_i\geq0\) we therefore have \(e_j\cdot a_i\leq e_j\cdot(a_i+b)\), and so \(e_j\cdot(\phi_\pi(v)-a_i)\geq e_j\cdot(\phi_\pi(v)-a_i-b)=e_j\cdot\phi_\pi(v_0)\geq0\) (since \(v_0\in V^\pi\)). In the case \(e_j\cdot a_i<0\) we have \(e_j\cdot(\phi_\pi(v)-a_i)\geq-e_j\cdot a_i\geq0\). So we have \(\phi_\pi(v)-a_i\in\N^{m(\pi)}\) as claimed.
\end{proof}

We have now constructed everything we require to prove the main theorem of this section.
\begin{proof}[Proof of Theorem \ref{thm:geodesicconstruction}]
	The sets \(V^{\pi}\) have been constructed so as to provide us with a geodesic representative for every group element (Lemma \ref{lem:finiteP} ensures that our finite set of patterns is sufficient). However, there may be group elements represented by words with more than one pattern, which will be represented in more than one of the sets \(V^{\pi}\). In order to pass to a set of \emph{unique} geodesics, and hence a normal form, we need to remove such overlaps. Let \(\mathbf{1}_i\) denote the \((2n+2)\)-dimensional vector whose \(i\)th entry is \(1\) with zeroes elsewhere. Define polyhedral sets
	\begin{align*}
		\Theta=\bigcap_{i=1}^n\left\{\theta\in\Z^{2n+2} : \theta\cdot(\mathbf{1}_i-\mathbf{1}_{i+n+1})=0\right\} \cap \left\{\theta\in\Z^{2n+2} : \theta\cdot(\mathbf{1}_{n+1}-\mathbf{1}_{2n+2})>0\right\}, \\
		\Theta_*=\bigcap_{i=1}^n\left\{\theta\in\Z^{2n+2}: \theta\cdot(\mathbf{1}_i-\mathbf{1}_{i+n+1})=0\right\} \cap \left\{\theta\in\Z^{2n+2} : \theta\cdot(\mathbf{1}_{n+1}-\mathbf{1}_{2n+2})=0\right\}.
	\end{align*}
	By Corollary \ref{cor:polytosemi} we can rewrite \(\Theta\) and \(\Theta_*\) as semilinear subsets of \(\Z^{2n+2}\). Write \(p\colon\Z^{2n+2}\to\Z^{n+1}\) for projection onto the first \(n+1\) coordinates (which is an integer affine map). Now define
	\begin{align*}
		R^{\pi,\mu} = (\mathcal{E}^\pi)^{-1}\circ p\left[\left(\mathcal{E}^\pi\circ\phi_\pi(V^\pi)\times\mathcal{E}^\mu\circ\phi_\mu(V^\mu)\right)\cap\Theta\right], \\
		R_*^{\pi,\mu} = (\mathcal{E}^\pi)^{-1}\circ p\left[\left(\mathcal{E}^\pi\circ\phi_\pi(V^\pi)\times\mathcal{E}^\mu\circ\phi_\mu(V^\mu)\right)\cap\Theta_*\right]
	\end{align*}
	for each \(\pi,\mu\in P\). By Theorem \ref{thm:intersectionsemi}, Lemma \ref{lem:semilinearimage} and Corollary \ref{cor:semipreimage}, each of these can be explicitly expressed as semilinear sets. Fix an arbitrary total order on the set of patterns \(P=\{\pi_1,\pi_2,\ldots,\pi_{|P|}\}\). For each pattern, define
	\begin{equation*}
		Q^{\pi_k}=\phi_{\pi_k}(V^{\pi_k})\setminus \left(\bigcup_{i\neq k} R^{\pi_k,\pi_i}\cup \bigcup_{j<k}R_*^{\pi_k,\pi_j}\right).
	\end{equation*}
	Theorem \ref{thm:complementsemi} implies that each \(Q^{\pi_k}\) is an effectively constructible semilinear set. Finally, define the set of geodesics \(U^{\pi_k}=\phi_{\pi_k}^{-1}(Q^{\pi_k})\). By construction, the sets \(U^\pi\) provide a complete set of unique geodesics for the elements of \(G\). See also Section 12 of \cite{Benson}, or the proof of Theorem 4.2 of \cite{Evetts}. Furthermore, the weight of \(u\in U^{\pi}\) is related to that of its image by \(\w(u)=A_{n+1}^\pi\cdot\phi_\pi(u)+B_{n+1}^\pi\) (recalling Definition \ref{def:structureconstants}). This proves the theorem.
\end{proof}

\subsection{Growth Series}
This section discusses a corollary of Theorem \ref{thm:geodesicconstruction} that does not relate to solving equations. We have the following slight generalisation of the standard definition of growth.
\begin{definition}
	Let \(G\) be a group with finite weighted generating set \(S\). The (spherical) weighted growth function is given by \(\sigma_{G,S}(n)=\#\{g\in G\colon \w(g)=n\}\), where \(\w(g)\) denotes the weight of \(g\) with respect to \(S\). The weighted growth series of \(G\) is then the generating function
	\[\sum_{n=0}^\infty \sigma_{G,S}(n)z^n\in\Z[[z]].\]
\end{definition}
\begin{definition}
	The set of \emph{\(\N\)-rational functions} is the smallest set of functions \(f(z)\) containing all polynomials in \(z\) and closed under multiplication, addition, and quasi-inverse (that is, if \(f(z)\) is in the set with \(f(0)=0\), then its quasi-inverse \(\frac{1}{1-f(z)}\) is in the set).
\end{definition}
The main result of \cite{Benson} is that the weighted growth series of a virtually abelian group is an \(\N\)-rational function, but the proof is non-constructive. Explicitly constructing the semilinear sets describing the geodesics allows us to calculate the growth series. More precisely, we have the following corollary of Theorem \ref{thm:geodesicconstruction}.
\begin{corollary}\label{cor:growth}
	The (\(\N\)-rational) weighted growth series of a finitely generated virtually abelian group can be explicitly computed.
\end{corollary}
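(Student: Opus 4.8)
The plan is to derive Corollary \ref{cor:growth} directly from the structural output of Theorem \ref{thm:geodesicconstruction}. That theorem hands us a finite partition of a geodesic normal form $\mathcal{U}$ into subsets $U^\pi$ (one per pattern $\pi\in P$), each with an injective map $\phi_\pi\colon U^\pi\to\N^m$ whose image $X^\pi=\phi_\pi(U^\pi)$ is an effectively constructible semilinear set, and with the weight of $u\in U^\pi$ given by the explicit affine formula $\w(u)=A_{n+1}^\pi\cdot\phi_\pi(u)+B_{n+1}^\pi$. Since $\mathcal{U}$ is a genuine normal form (exactly one geodesic per group element) and the partition is disjoint, the weighted growth series factors as a sum over patterns:
\[
\sum_{n=0}^\infty \sigma_{G,S}(n)z^n = \sum_{\pi\in P}\ \sum_{x\in X^\pi} z^{A_{n+1}^\pi\cdot x + B_{n+1}^\pi}.
\]
So the whole problem reduces to computing, for each pattern, the generating function that tracks a fixed affine weight across an effectively constructible semilinear set in $\N^m$.

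The key steps I would carry out are as follows. First, fix a pattern $\pi$ and write $X^\pi=\bigcup_{i=1}^d (a_i + B_i^*)$ in the semilinear form guaranteed by Definition \ref{def:semieffcon}. To avoid overcounting on overlaps of these constituent linear sets, I would invoke Theorem \ref{thm:complementsemi} (effective set difference) to rewrite $X^\pi$ as a \emph{disjoint} finite union of linear sets; this is routine given the tools of Section \ref{sec:semilinear}. Second, for a single disjoint linear piece $a+\{c_1,\ldots,c_t\}^*$, I would further arrange (again using Theorem \ref{thm:complementsemi} and the orthant decomposition of Corollary \ref{cor:orthant}) that the generators $c_j$ are linearly independent, so that the representation $x=a+\sum_j \lambda_j c_j$ with $\lambda_j\in\N$ is \emph{unique}; this guarantees each group element is counted exactly once. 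Third, for such a uniquely-parametrised linear set I compute the contribution to the series directly: writing $w(c_j)=A_{n+1}^\pi\cdot c_j$ for the (non-negative integer) weight increment of each free generator, the contribution is the product
\[
z^{A_{n+1}^\pi\cdot a + B_{n+1}^\pi}\ \prod_{j=1}^t \frac{1}{1-z^{A_{n+1}^\pi\cdot c_j}},
\]
which is manifestly an $\N$-rational function of $z$, each factor being the quasi-inverse of the monomial $z^{w(c_j)}$. Summing these products over the disjoint pieces and over all $\pi\in P$ yields the full series as a finite sum of $\N$-rational functions, hence $\N$-rational, and every ingredient is produced by an explicit algorithm.

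The main obstacle is bookkeeping to ensure \emph{each element is counted exactly once}, rather than any single computation. Two sources of over-counting must be handled: overlaps between different constituent linear sets of a semilinear expression, and the non-uniqueness of the coordinate vector $(\lambda_1,\ldots,\lambda_t)$ when the generators $c_j$ of a linear set are linearly dependent. Both are resolved by the effective intersection and complement machinery of Section \ref{sec:semilinear}, so the reduction to a freely-generated (independent-generator) disjoint union is the crux; once that normal form for the semilinear sets is in hand, the geometric-series formula above is immediate and the $\N$-rationality is read off directly from the definition of $\N$-rational functions as the closure under the quasi-inverse operation $f\mapsto 1/(1-f)$ applied to the monomials $z^{w(c_j)}$.
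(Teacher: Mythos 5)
Your overall route is the same as the paper's: decompose the growth series over the patterns $\pi\in P$ using Theorem \ref{thm:geodesicconstruction}, reduce to computing the weighted generating function of each semilinear image $\phi_\pi(U^\pi)$, rewrite each semilinear set as a disjoint union of linear sets with linearly independent period vectors, and read off the series as a sum of products of quasi-inverses $z^{\w(a)}\prod_j (1-z^{\w(c_j)})^{-1}$. However, there is a genuine gap at exactly the step you call the crux. You claim that the disjointification and the passage to linearly independent generators are ``resolved by the effective intersection and complement machinery of Section \ref{sec:semilinear}'' (Theorem \ref{thm:complementsemi} and Corollary \ref{cor:orthant}), and that this is ``routine.'' It is not, and those tools do not deliver it. First, disjointness: if $X=\bigcup_i L_i$ with each $L_i$ linear, the natural move $L_2\setminus L_1$ produces, via Theorem \ref{thm:complementsemi}, a \emph{semilinear} set, i.e.\ a finite union of linear sets which may again overlap one another; iterating the set-difference trick keeps generating possibly-overlapping unions with no decreasing measure, so the procedure has no evident termination in a disjoint union of \emph{linear} pieces. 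Second, linear independence of the periods is not addressed at all by orthant decomposition or by effective complements: Corollary \ref{cor:orthant} only controls signs of coordinates, and a monotone linear set such as $\{(2,0),(0,2),(1,1)\}^*\subset\N^2$ still has dependent periods; re-expressing it as a disjoint union of simple linear sets (here $\{(2,0),(0,2)\}^*\sqcup\bigl((1,1)+\{(2,0),(0,2)\}^*\bigr)$) requires a structural argument, not just intersections and complements.

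What your argument actually needs is precisely the paper's Lemma \ref{lem:linearindependence}, the effective form of Eilenberg and Sch\"utzenberger's Theorem IV from \cite{RationalSets}: every semilinear subset of $\N^k$ can be effectively rewritten as a disjoint union of finitely many linear sets whose period sets are linearly independent. This is a substantive theorem with its own proof, and the paper's proof of Corollary \ref{cor:growth} (via Proposition \ref{prop:semilineargrowth}) invokes it explicitly rather than deriving it from Section \ref{sec:semilinear}. If you replace your appeal to Theorem \ref{thm:complementsemi} and Corollary \ref{cor:orthant} at this step by an appeal to Lemma \ref{lem:linearindependence}, the rest of your proof (the pattern decomposition, the affine weight bookkeeping $\w(u)=A_{n+1}^\pi\cdot\phi_\pi(u)+B_{n+1}^\pi$, and the product formula) goes through and coincides with the paper's argument; one should also note, for the quasi-inverse to be legitimate, that each increment $A_{n+1}^\pi\cdot c_j$ is strictly positive, which holds since the entries of $A_{n+1}^\pi$ are positive weights and each nonzero period $c_j$ lies in $\N^m$.
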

To establish Corollary \ref{cor:growth}, we need the following lemma that follows from the proof of \cite[Theorem IV]{RationalSets}.
\begin{lemma}(see \cite[Theorem IV]{RationalSets})\label{lem:linearindependence}
	If \(X\subset\N^k\) is semilinear, it can be expressed as a disjoint union of finitely many linear sets \(X=\bigcup_{i=1}^r(a_i+B_i^*) \) such that each \(B_i\) consists of linearly independent elements. Furthermore, \(a_i\) and \(B_i\) can be effectively computed from the original semilinear expression for \(X\).
\end{lemma}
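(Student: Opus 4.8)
The plan is to effectivise the (non-constructive) proof of Theorem IV of \cite{RationalSets}, replacing each appeal to the mere existence of finite combinatorial data by an explicit construction drawn from Section \ref{sec:semilinear}. Since \(X\subseteq\N^k\), I first reduce to a single linear set. By Remark \ref{rem:semilinearunion}, \(X=\bigcup_{i=1}^s L_i\) for linear sets \(L_i\), and using the effective set difference of Theorem \ref{thm:complementsemi} I may replace this by the \emph{disjoint} union of the effectively constructible semilinear sets \(L_i\setminus\bigcup_{j<i}L_j\). Since distinct members of this disjoint family stay disjoint under any further refinement of their own pieces, it suffices to decompose a single semilinear — and hence, after expanding into constituents, a single linear — set into a disjoint union of linear sets whose period sets are linearly independent.

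The core step is therefore to decompose one linear set \(a+B^*\), with \(B=\{b_1,\ldots,b_m\}\subset\N^k\), into a disjoint union of linear sets with linearly independent periods. After discarding any zero vectors (which are redundant in \(B\)), linear dependence over \(\Q\) is equivalent to the existence of a nonzero \emph{collision} \(\sum_{i\in I}\alpha_i b_i=\sum_{j\in J}\beta_j b_j\) with disjoint supports \(I,J\) and coefficients in \(\N\): indeed, a nontrivial integer relation among nonzero vectors of \(\N^k\) must have terms of both signs. The essential effective input is that the finitely many \emph{minimal} collisions can be computed, by applying Lemma \ref{lem:naturalGS} (equivalently Lemma \ref{lem:integerGS}) to the identity \(\sum_i x_i b_i-\sum_i y_i b_i=0\). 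Following \cite{RationalSets}, these minimal collisions drive an inductive decomposition: they allow one to confine the coefficient vectors to finitely many, effectively listable, pieces each of strictly smaller complexity (spanning a simplicial subcone of \(\operatorname{cone}(B)\), with independent periods), to which the construction is applied recursively.

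It remains to assemble the global decomposition and restore disjointness. Across the finitely many pieces produced above, disjointness is re-established by the effective set difference of Theorem \ref{thm:complementsemi}, after which any resulting semilinear remainders are fed back into the core step. Effectivity is preserved throughout, since every operation invoked — finite unions (Remark \ref{rem:semilinearunion}), intersections (Theorem \ref{thm:intersectionsemi}), differences (Theorem \ref{thm:complementsemi}), affine images and preimages (Lemma \ref{lem:semilinearimage}, Corollary \ref{cor:semipreimage}), and extraction of minimal solutions (Lemma \ref{lem:integerGS}) — has already been shown to be effective. I expect the main obstacle to be precisely the interaction between the two conclusions: taking a set difference can reintroduce linearly dependent period sets, so disjointification and independence must be interleaved rather than carried out once each. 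I would control this with a well-founded measure (for instance, a suitable multiset of ranks of the period sets occurring in the current expression, compared in the well-order on finite multisets of integers) that strictly decreases under the core reduction, thereby guaranteeing termination and hence an algorithm. Making this termination argument — implicit in \cite{RationalSets} — fully explicit is the delicate part of the proof.
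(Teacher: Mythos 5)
First, a point of comparison: the paper does not actually prove Lemma \ref{lem:linearindependence} --- it is quoted as following from the proof of Theorem IV of \cite{RationalSets}, so the paper's ``proof'' is that citation alone. Your plan, to effectivise the Eilenberg--Sch\"utzenberger argument using the toolkit of Section \ref{sec:semilinear}, is therefore exactly the route the paper gestures at, and your core step is sound: linear dependence among nonzero vectors of \(\N^k\) is indeed equivalent to a collision \(\sum_{i\in I}\alpha_i b_i=\sum_{j\in J}\beta_j b_j\) with disjoint supports and natural coefficients, the minimal collisions are computable by Lemma \ref{lem:naturalGS}, and rewriting along a fixed collision decomposes \(a+B^*\) effectively into the finite union \(\bigcup_{i\in I}\bigcup_{c<\alpha_i}\bigl(a+cb_i+(B\setminus\{b_i\})^*\bigr)\) of linear sets with strictly fewer periods, so induction on \(|B|\) gives an effective decomposition into \emph{independent} pieces.

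The genuine gap is precisely where you flag ``the delicate part'': that union is not disjoint, and your termination argument for the interleaving of disjointification with the independence reduction does not hold up. The difference constructions of Theorems \ref{thm:complementsemiN} and \ref{thm:complementsemi} give no control over the period sets of their output: the difference of two independent linear sets of rank \(r\) is in general again a union of linear sets of rank \(r\) (already in \(\N\) one has \(\{1\}^*\setminus\{2\}^*=1+\{2\}^*\)), so neither the ranks nor the numbers of periods strictly decrease at a disjointification step; since the multiset well-order only decreases when elements are replaced by \emph{strictly smaller} ones, your proposed measure can stall, and the loop ``core step, then differences, then core step again'' has no proof of termination. The classical way out --- implicit in \cite{RationalSets} and absent from your sketch --- is to prove a stronger inductive statement that retains control of the periods: every \(a+B^*\) is a finite \emph{disjoint} union of linear sets \(c+C^*\) whose period sets \(C\) are linearly independent \emph{subsets of} \(B\) (for instance \(\{2,3\}^*=\{2\}^*\cup\bigl(3+\{2\}^*\bigr)\), a disjoint union). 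With that invariant, everything ever produced lives over the finite lattice of subsets of the original period sets, the induction is on \(|B|\), and disjointness is built in rather than restored afterwards by generic set differences. Without this (or some other genuine termination mechanism), your argument is incomplete at its crucial step.
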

%
Lemma \ref{lem:linearindependence} allows us to make the following proposition, appearing in \cite{CiobanuEvetts}, effective. See also \cite{DAIV} for more work on the growth series of semilinear sets.
\begin{proposition}\label{prop:semilineargrowth}
	If \(X\subset\N^k\) is semilinear, its growth series (with respect to a weighted \(\ell_1\)-norm) is \(\N\)-rational and can be explicitly computed.
\end{proposition}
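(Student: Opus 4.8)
The plan is to express the growth series of a semilinear set $X\subset\N^k$ as a finite sum of growth series of individual linear sets. First I would invoke Lemma \ref{lem:linearindependence} to write $X=\bigcup_{i=1}^r(a_i+B_i^*)$ as a \emph{disjoint} union where each $B_i=\{b_{i,1},\ldots,b_{i,d_i}\}$ consists of linearly independent vectors, with all the data $a_i$ and $B_i$ effectively computable. Because the union is disjoint, the growth series of $X$ is the sum $\sum_{i=1}^r G_i(z)$ of the growth series $G_i(z)$ of the individual linear sets $a_i+B_i^*$, so it suffices to compute each $G_i(z)$ explicitly and check that it is $\N$-rational.

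For a single linear set $L=a+B^*$ with $B=\{b_1,\ldots,b_d\}$ linearly independent, the key point is that every element of $L$ has a \emph{unique} expression $a+\sum_{j=1}^d n_j b_j$ with $n_j\in\N$: uniqueness is exactly where linear independence is used, since it guarantees the map $(n_1,\ldots,n_d)\mapsto a+\sum_j n_j b_j$ is injective. Writing $\w$ for the weighted $\ell_1$-norm, the weight is additive along the coordinate directions only up to the affine shift, but more usefully the weight of the element indexed by $(n_1,\ldots,n_d)$ depends linearly on the $n_j$ plus a constant; I would record the weight $\w(a+\sum_j n_j b_j)$ and observe that, because the $\ell_1$-norm of a sum of vectors lying in a common orthant is the sum of the norms (after passing to orthants via Corollary \ref{cor:orthant}, so that all $b_j$ and $a$ share signs coordinate-wise), the weight is precisely $\w(a)+\sum_{j=1}^d n_j\,\w(b_j)$. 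Hence the growth series factors as a product over the generators:
\begin{equation*}
	G_i(z)=z^{\w(a_i)}\prod_{j=1}^{d_i}\frac{1}{1-z^{\w(b_{i,j})}},
\end{equation*}
which is visibly $\N$-rational (a polynomial times quasi-inverses of polynomials with natural-number coefficients) and explicitly computable from the data $a_i,B_i$ and the weight function.

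The main obstacle is ensuring the weight genuinely splits additively as $\w(a_i)+\sum_j n_j\w(b_{i,j})$ rather than behaving like a general $\ell_1$-norm with cancellation; this is why the reduction through Corollary \ref{cor:orthant} to monotone (single-orthant) linear sets is essential, since within one orthant the $\ell_1$-norm is linear and no cancellation occurs. Concretely, I would first apply Corollary \ref{cor:orthant} to decompose $X$ into monotone semilinear pieces, then apply Lemma \ref{lem:linearindependence} to each monotone piece so that simultaneously the union is disjoint, the generating sets are linearly independent, \emph{and} each linear set lies in a single orthant. Once these three properties hold the computation above goes through verbatim, and summing the finitely many $\N$-rational series $G_i(z)$ (using closure of $\N$-rational functions under addition) yields the explicitly computable $\N$-rational growth series of $X$.
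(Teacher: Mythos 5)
Your proposal is correct and takes essentially the same route as the paper: both invoke Lemma \ref{lem:linearindependence} to write \(X\) as a disjoint union of linear sets \(a_i+B_i^*\) with each \(B_i\) linearly independent (giving uniqueness of representations), compute each piece's growth series as \(z^{\w(a_i)}\prod_{b\in B_i}\frac{1}{1-z^{\w(b)}}\), and sum the finitely many \(\N\)-rational series. The only difference is your preliminary reduction via Corollary \ref{cor:orthant}, which is superfluous: since \(X\subset\N^k\), Lemma \ref{lem:linearindependence} already yields \(a_i\) and \(B_i\) inside \(\N^k\), where the weighted \(\ell_1\)-norm is genuinely linear and the cancellation you worry about cannot occur.
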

\begin{proof}
	Lemma \ref{lem:linearindependence} implies that \(X\) can be effectively expressed as the disjoint union of finitely many linear sets of the form \(a_i+B_i^*\), where each \(B_i\) is a linearly independent set. Such a linear set has growth series \[z^{\w(a_i)}\prod_{b\in B_i}\frac{1}{1-z^{\w(b)}}.\] The growth series of \(X\) is then the sum of rational sets of the given form.
\end{proof}
Finally, we can add together the rational growth series from each disjoint piece of \(\mathcal{U}\) to obtain the growth series of the whole.
\begin{proof}[Proof of Corollary \ref{cor:growth}]
	The growth series of the group is equal to the growth series of the set of geodesics \(\mathcal{U}\). We have
	\begin{align*}
		\sum_n \sigma_{\mathcal{U}}(n)z^n = \sum_{\pi}\sum_n \sigma_{U^\pi}(n)z^n = \sum_{\pi} z^{\w(\pi)}\sum_n \sigma_{\phi_\pi(U^\pi)}(n)z^n
	\end{align*}
	so it suffices to calculate the growth series of each semilinear set \(\phi_\pi(U^{\pi})\). This can be done effectively by Proposition \ref{prop:semilineargrowth}.
\end{proof}

In future work we intend to extend the results of this section to other growth series associated to the group, as studied in \cite{Evetts} and \cite{CiobanuEvetts}.

\section{Length constraints in virtually abelian groups}\label{sec:length}

In this section we show that tuples of elements in a virtually abelian group $G$ whose word lengths satisfy systems of linear (in)equalities form rational subsets of $G$. Moreover, these rational sets can be described explicitly in terms of effectively constructible semilinear subsets of \(\Z^n\) (which is, as usual, the finite index normal free abelian subgroup of \(G\)).

\begin{definition}\label{def:length_constraint}
	Let \(G\) be a finitely generated group, with finite generating set \(S\). For any positive integer \(k\), a \emph{length constraint} on \(G^k\) will be a subset of \(G^k\) in one of the following forms
	\begin{align*}
		\left\{(g_1,\ldots,g_k)\in G^k : \sum_{i=1}^k\alpha_i|g_i|_S = \beta\right\},\\
		\left\{(g_1,\ldots,g_k)\in G^k : \sum_{i=1}^k\alpha_i|g_i|_S \leq \beta\right\},
	\end{align*}
	where \(\alpha_i,\beta\in\Z\), and \(|g|_S\) denotes word length of \(g\in G\) with respect to \(S\).
\end{definition}

The following Theorem asserts that we can explicitly compute a rational expression for any length constraint, using the sets of geodesics \(U^\pi\), constructed in Section \ref{sec:geodesics}.
\begin{theorem}\label{thm:lengthrational}
	Any length constraint (as in Definition \ref{def:length_constraint}) in a virtually abelian group can be effectively expressed as a semilinear, and hence rational, subset.
\end{theorem}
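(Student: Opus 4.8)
The plan is to feed the geodesic normal form of Theorem \ref{thm:geodesicconstruction} into the effective semilinear machinery of Sections \ref{sec:semilinear}--\ref{sec:geodesics}, and then to convert the resulting semilinear data back into a rational subset of \(G^k\). Throughout I interpret \(|g_i|_S\) as the weight \(\w(g_i)\) (which coincides with word length when all weights equal \(1\)). The observation that makes everything work is already encoded in Definition \ref{def:structureconstants}: for a fixed pattern \(\pi\), both the weight \(\w(g)=A_{n+1}^\pi\cdot\phi_\pi(g)+B_{n+1}^\pi\) and the group element \(g=(M^\pi\phi_\pi(g)+c^\pi)\,t_\pi\) (where \(M^\pi\) has rows \(A_1^\pi,\dots,A_n^\pi\) and \(c^\pi=(B_1^\pi,\dots,B_n^\pi)\)) are \emph{integer affine} functions of the exponent vector \(\phi_\pi(g)\). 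Thus a linear condition on the weights pulls back to an affine condition on the exponent data, and the exponent data pushes forward affinely into the group.

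First I would partition \(G^k\) according to the patterns of the coordinates. Every group element has a unique geodesic lying in exactly one of the finitely many sets \(U^\pi\), so fixing a tuple \((\pi_1,\dots,\pi_k)\in P^k\) splits the constraint set into finitely many pieces, and by Remark \ref{rem:semilinearunion} (and closure of rational sets under union) it suffices to treat one piece and take the union at the end. For a fixed pattern tuple, put \(M=\sum_i m(\pi_i)\) and work in \(\N^M=\N^{m(\pi_1)}\times\cdots\times\N^{m(\pi_k)}\) via the combined map \((\phi_{\pi_1},\dots,\phi_{\pi_k})\). Since \(\w(g_i)=A^{\pi_i}_{n+1}\cdot\phi_{\pi_i}(g_i)+B^{\pi_i}_{n+1}\) is affine in the \(i\)th block, the constraint \(\sum_i\alpha_i\w(g_i)=\beta\) (respectively \(\le\beta\)) becomes a single integer affine equation (respectively inequality) on \(\N^M\), i.e.\ an elementary region in the sense of Definition \ref{def:PolSets}, hence semilinear and effectively constructible by Corollary \ref{cor:polytosemi}. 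Intersecting this with the product \(\phi_{\pi_1}(U^{\pi_1})\times\cdots\times\phi_{\pi_k}(U^{\pi_k})\) --- which is semilinear and effectively constructible since each factor is so by Theorem \ref{thm:geodesicconstruction} and products of linear sets are linear --- yields, by Theorem \ref{thm:intersectionsemi}, an effectively constructible semilinear set \(S\subseteq\N^M\) whose points correspond exactly to the tuples with the chosen patterns that satisfy the constraint.

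Finally I would push \(S\) back into \(G^k\). The coordinatewise integer affine map \(\N^M\to\Z^{nk}\) sending \((v_1,\dots,v_k)\) to \((M^{\pi_1}v_1+c^{\pi_1},\dots,M^{\pi_k}v_k+c^{\pi_k})\) has, by Lemma \ref{lem:semilinearimage}, an effectively constructible semilinear image in the abelian subgroup \(A^k=\Z^{nk}\le G^k\); the actual tuple of group elements is then recovered by right-multiplying the \(i\)th block by the fixed transversal element \(t_{\pi_i}\). The one genuinely new ingredient --- and the step I expect to be the crux, even though it is short --- is that a semilinear subset of the \emph{abelian} subgroup \(\Z^{nk}\) is automatically a rational subset of \(G^k\): because \(\Z^{nk}\) is abelian, a linear set \(\aa+\{\bb_1,\dots,\bb_p\}^*\) is precisely the image of the regular language \(W_\aa W_1^*\cdots W_p^*\), where \(W_\aa,W_1,\dots,W_p\) are fixed words over the generators of \(G^k\) representing \(\aa,\bb_1,\dots,\bb_p\) (commutativity makes \(\pi(W_\aa W_1^{n_1}\cdots W_p^{n_p})=\aa+\sum_j n_j\bb_j\)). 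A finite union of such sets is rational, and right-multiplication by the fixed tuple \((t_{\pi_1},\dots,t_{\pi_k})\) preserves rationality and effectiveness. Taking the union over all pattern tuples \((\pi_1,\dots,\pi_k)\in P^k\) exhibits the length constraint as an effectively constructible rational subset of \(G^k\). All the hard analytic work is thereby relegated to Theorem \ref{thm:geodesicconstruction} and Theorem \ref{thm:intersectionsemi}, leaving only this assembly and the abelian-to-rational bridge.
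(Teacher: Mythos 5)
Your proof is correct and takes essentially the same route as the paper's: decompose by pattern tuples \((\pi_1,\ldots,\pi_k)\in P^k\), turn the weight constraint into an integer affine condition on exponent vectors via \(A_{n+1}^{\pi_i}\) and \(B_{n+1}^{\pi_i}\), intersect with \(\phi_{\pi_1}(U^{\pi_1})\times\cdots\times\phi_{\pi_k}(U^{\pi_k})\) using Corollary \ref{cor:polytosemi} and Theorem \ref{thm:intersectionsemi}, and push the result back into \(G^k\) by the structure-constant affine maps followed by the fixed transversal elements. Two minor points in your favour: pushing forward the whole coupled set \(S\) under a single block-diagonal affine map is cleaner than the paper's formulation, which writes \(L\) using blockwise projections of \(M_{(\pi_1,\ldots,\pi_k)}\) and should be read exactly as you do it (a literal Cartesian product of projections would lose the coupling between coordinates that the constraint imposes), and you spell out explicitly, via the regular language \(W_{\aa}W_1^*\cdots W_p^*\), why a semilinear subset of the abelian subgroup is rational in \(G^k\), a bridge the paper asserts without comment.
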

\begin{proof}
	As usual, let \(G\) be a finitely generated virtually abelian group described as in Definition \ref{def:VA}, and recall the geodesic normal form \(\mathcal{U}=\bigcup U^\pi\) constructed above. Throughout this proof we will use the notation of Section \ref{sec:geodesics}.

			Suppose we have a length constraint \(L\subset G^k\) of the first form in the definition. The same argument will work for the second form. We may obtain a complete set of unique geodesics for \(L\) by decomposing it according to which pattern the element geodesics fall into. That is, if an element $g\in G$ has its unique geodesic representative $u \in S^*$ in the set $U^{\pi_k}$ (where $1\leq k \leq |P|$ as in Definition \ref{def:patterns}), then we work with $\w(u)$ and the corresponding quantities related to the pattern $\pi_k$. Considering \(k\)-tuples of geodesics, we have
		\begin{align*}
			&\bigcup_{(\pi_1,\ldots,\pi_k)\in P^k}\left\{(u_1,\ldots,u_k) : u_i\in U^{\pi_i},~ \sum_{i=1}^k\alpha_i\w(u_i)=\beta \right\} \\
			&=\bigcup_{(\pi_1,\ldots,\pi_k)\in P^k}\left\{(u_1,\ldots,u_k) : u_i\in U^{\pi_i},~ \sum_{i=1}^k\alpha_i(A^{\pi_i}_{n+1}\cdot \phi_{\pi_i}(u_i)+B^{\pi_i}_{n+1})=\beta \right\},
		\end{align*}
		where we use the vectors \(A^\pi_{n+1}\) and integers \(B^\pi_{n+1}\) of Definition \ref{def:structureconstants} to construct the weight of the words \(u_i\).
		The summation condition can be rewritten as the identity
		\begin{equation*}
			\left(\alpha_1 A^{\pi_1}_{n+1}, \cdots \alpha_kA^{\pi_k}_{n+1}\right)\cdot\left(\phi_{\pi_1}(u_1), \cdots, \phi_{\pi_k}(u_k) \right)
			 = \beta -\sum_{i=1}^k\alpha_iB^{\pi_i}_{n+1},
		\end{equation*}
		where we have concatenated vectors on the left hand side, resulting in the scalar product of two vectors in \(\Z^{\sum_i m(\pi_i)}\).
		Thus the following expression defines the set of \(k\)-tuples of vectors corresponding to unique geodesic representatives for group elements satisfying the length constraint:
		\begin{equation*}
			M_{(\pi_1,\ldots,\pi_k)}=\left\{x\in\Z^{\sum_i m(\pi_i)} : \left(\alpha_1 A^{\pi_1}_{n+1}, \cdots \alpha_kA^{\pi_k}_{n+1}\right)\cdot x = \beta - \sum_{i=1}^k\alpha_iB_{n+1}^{\pi_i} \right\} \cap \phi_{\pi_1}(U^{\pi_1})\times\cdots\times\phi_{\pi_k}(U^{\pi_k}).
		\end{equation*}
		By Theorem \ref{thm:geodesicconstruction}, and Corollary \ref{cor:polytosemi}, we can explicitly construct this semilinar subset, for each of finitely many \(k\)-tuples in \(P^k\).

		Write \(p^{\pi_j}\colon \Z^{\sum_i m(\pi_i)}\to\Z^{m(\pi_j)}\) for the projection onto just the components corresponding to \(\pi_j\). Recall the integer affine maps \(\mathcal{E}^\pi\colon\phi_\pi(W^\pi)\to\Z^{n+1}\), and define $\tilde{\mathcal{E}}^\pi\colon\phi_\pi(W^\pi)\to\Z^n$ to be the projection onto the first $n$ coordinates, so that $\tilde{\mathcal{E}}^\pi\circ\phi_\pi(w)\cdot t_\pi =_G w$ for any $w\in W^\pi$. We then have
		\begin{align*}
			L &= \left(\tilde{\mathcal{E}}^{\pi_1}\circ\phi_{\pi_1}\circ p^{\pi_1}(M_{(\pi_1,\ldots,\pi_k)})\right)t_{\pi_1}\times\cdots\times \left(\tilde{\mathcal{E}}^{\pi_k}\circ\phi_{\pi_k}\circ p^{\pi_k}(M_{(\pi_1,\ldots,\pi_k)})\right)t_{\pi_k} \\
			&=  \left(\tilde{\mathcal{E}}^{\pi_1}\circ\phi_{\pi_1}\circ p^{\pi_1}(M_{(\pi_1,\ldots,\pi_k)})\times\cdots\times \tilde{\mathcal{E}}^{\pi_k}\circ\phi_{\pi_k}\circ p^{\pi_k}(M_{(\pi_1,\ldots,\pi_k)})\right)\cdot t_{\pi_1}\cdots t_{\pi_k}.
		\end{align*}
		Since each factor \(\tilde{\mathcal{E}}^{\pi_i}\circ\phi_{\pi_i}\circ p^{\pi_i}(M_{(\pi_1,\ldots,\pi_k)})\) is semilinear, the cartesian product is also semilinear, and hence so too is \(L\) itself. Furthermore, we have an explicit semilinear expression for \(L\), which is of course a rational expression, as required.
\end{proof}

\section{Abelianisation constraints}

 	We start with the following fact about abelianisation constraints (in any
 group) which allows us to reduce the question to checking membership in cosets
 of the commutator subgroup.

 \begin{lemma}
 	\label{lem:ab-const-form}
 	Let \(G\) be a group and \(\mathcal{E}\) be a system of equations in \(G\)
 	with abelianisation constraints. Then there exists a
 	system of equations \(\bar{\mathcal E}\) in \(G\) with the same set of
 	solutions as \(\mathcal{E}\) which has no abelianisation constraints, but
 	 instead the constraints:
 	\[
 	X_i \in [G, G]h_i,
 	\]
 	where each \(X_i\) is a variable and each \(h_i \in G\).
 \end{lemma}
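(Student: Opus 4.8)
The plan is to exploit that $G^{\ab}=G/[G,G]$ is abelian, so each equation of the constraint system $\Theta$, read in $G^{\ab}$, collapses to a single $\Z$-linear relation among the abelianised variables. Writing $G^{\ab}$ additively, any word in the variables $X_1,\dots,X_m$ and constants may be freely commuted and collected, so every equation of $\Theta$ rewrites in the form $\sum_{i=1}^m n_i\,\ab(X_i)=\bar c$ for integers $n_i$ and a constant $\bar c\in G^{\ab}$. This normalisation of $\Theta$ is the first step.

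Next, for one such linear relation I would introduce a fresh variable $X$ and append to $\mathcal{E}$ the equation $X=X_1^{n_1}\cdots X_m^{n_m}$ (that is, the word $X\,(X_1^{n_1}\cdots X_m^{n_m})^{-1}$). Since $\ab$ is a homomorphism, any solution forces $\ab(X)=\sum_i n_i\,\ab(X_i)$, so the relation $\sum_i n_i\,\ab(X_i)=\bar c$ becomes simply $\ab(X)=\bar c$. The crucial observation is that $\ker(\ab)=[G,G]$: fixing any preimage $h\in G$ with $\ab(h)=\bar c$, the condition $\ab(X)=\bar c$ is equivalent to $Xh^{-1}\in[G,G]$, i.e. $X\in[G,G]h$. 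This replaces the abelianisation constraint coming from that equation by the single coset constraint in the statement.

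I would then verify that $\bar{\mathcal{E}}$ (the original equations together with the new equation, now carrying the coset constraint and no abelianisation constraint) has the same solutions as $\mathcal{E}$. In one direction, given a solution $(x_1,\dots,x_m)$ of $\mathcal{E}$ whose abelianisation satisfies the relation, setting $x:=x_1^{n_1}\cdots x_m^{n_m}$ yields a solution of $\bar{\mathcal{E}}$ with $\ab(x)=\bar c$, hence $x\in[G,G]h$. Conversely, any solution of $\bar{\mathcal{E}}$ with $X\in[G,G]h$ satisfies the original equations (a subset of $\bar{\mathcal{E}}$) and forces $\ab(x)=\ab(h)=\bar c$, recovering the linear relation; since the value of $X$ is determined by the others, the two solution sets correspond bijectively (and in particular are simultaneously empty when $\Theta$ is inconsistent).

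Finally, for a system $\Theta$ of several equations I would apply this procedure to each of the finitely many linear relations in turn, introducing one fresh variable and one coset constraint $X_j\in[G,G]h_j$ per equation; the statement records the prototypical single constraint. The only point needing care — the main, and mild, obstacle — is the first step: justifying that an arbitrary equation of $\Theta$ genuinely normalises to the form $\sum_i n_i\,\ab(X_i)=\bar c$, and noting that the chosen preimage $h$ is immaterial, since any two preimages of $\bar c$ differ by an element of $[G,G]$ and hence define the same coset $[G,G]h$. Everything else follows directly from $\ab$ being a homomorphism with kernel $[G,G]$, so the argument makes no use of virtual abelianness and holds for an arbitrary group $G$.
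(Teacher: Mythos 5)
Your proposal is correct, and its core mechanism is exactly the paper's: introduce a fresh variable \(Y\) together with the new equation \(Y = X_1^{n_1}\cdots X_k^{n_k}\), and use the fact that \(\ab\) is a surjective homomorphism with kernel \([G,G]\) to turn the collected relation \(\sum_i n_i\,\ab(X_i)=\bar c\) into \(\ab(Y)=\bar c\), i.e.\ the coset constraint \(Y\in[G,G]h\) for any preimage \(h\) of \(\bar c\). The difference lies in the decomposition: the paper first splits the equations of \(\Theta\) into two cases --- those with a single occurrence of a single variable, which it reduces to a ``standard'' equation \(X=g\) in \(G\), and all others, which receive the fresh-variable treatment --- whereas you normalise every equation of \(\Theta\) to the linear form \(\sum_i n_i\,\ab(X_i)=\bar c\) and handle them all uniformly. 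Your uniform treatment is arguably cleaner and safer: read literally, the paper's first case replaces the constraint \(\ab(X)=\ab(g)\) (whose solution set is the coset \([G,G]g\)) by the equation \(X=g\) (whose solution set is \(\{g\}\)), which shrinks the solution set unless \(G\) is abelian; your version keeps such constraints in the coset form \(X\in[G,G]g\), which is precisely what the lemma's conclusion permits. You also make explicit two points the paper leaves implicit: that the choice of preimage \(h\) is immaterial, since two preimages of \(\bar c\) differ by an element of \([G,G]\), and that the solution sets of \(\mathcal{E}\) and \(\bar{\mathcal{E}}\) correspond bijectively rather than being literally equal, since \(\bar{\mathcal{E}}\) carries extra variables whose values are determined by the original ones.
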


 \begin{proof}
  Let \(\Theta\) denote the system of equations in \(G^\ab\) that are the
  abelianisation constraints in \(\mathcal{E}\). For each equation in
  \(\Theta\) we have two cases: the equation has one variable, occurring only
  once, or the equation has at least two variables, or more than one occurrence
  of the same variable. In the first case, the
  equation is immediately equivalent to \(X^\ab =  g^\ab\) for some \(g \in G\). This
  is equivalent to \(X \in [G, G]g\).
  Thus we can start defining \(\bar{\mathcal E}\) to have the same equations
  and abelianisation constraints as \(\mathcal E\), except with the
  constraints \(X \in [G, G]g\) replacing the single-variable, single
  occurrence abelianisation constraints.

  For the remaining abelianisation constraints that have two or more
  occurrences of variables, note that (after grouping variables) they are
  equations of the form \((X_1^\ab)^{i_1} \cdots (X_k^\ab)^{i_k} = h^\ab\)
  within \(G^\ab\), where \(X_1, \ldots, X_k\) are variables and \(h \in G\).
  We can therefore define a new variable \(Y\) in \(G\) for each such equation
  as \(Y = X_1^{i_1} \cdots X_k^{i_k}\). Then this abelianisation constraint is
  equivalent to \(Y^{\ab} = h^{\ab}\) within \(G^\ab\), which is the same as
  \(Y \in [G, G] h\). Thus we can replace each such abelianisation constraint
  in \(\mathcal E\) with a new variable, an additional equation, and a
  constraint of the form \(Y \in [G, G] h\) to define \(\bar{\mathcal E}\),
  which has the desired form.
 \end{proof}

 In light of the above lemma, to understand solutions to systems of equations
 with abelianisation constraints, we need to understand the cosets of the
 commutator subgroup. In the case of virtually abelian groups, we show that
 these are in fact rational sets, and provide an algorithm for effectively
 computing a rational expression for a coset from the virtually abelian group
 description and any coset representative.

 We start by showing that the commutator subgroup of a virtually abelian group
 is itself virtually abelian. This itself is not new; it follows from the fact
 that subgroups of virtually abelian groups are virtually abelian. We include a
 proof for completeness, and so that we can explicitly understand the index of
 abelian normal subgroups.

 \begin{lemma}
 	Let \(G\) be a finitely generated virtually abelian group, with a
 	finite-index normal free abelian subgroup of index \(k\). Then
 	\([G, \ G]\) is finitely generated virtually abelian, and admits
 	a free abelian normal subgroup of index at most \(k\).
 \end{lemma}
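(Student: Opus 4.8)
The plan is to establish the claim by first identifying the abelian normal subgroup explicitly and then bounding its index. Recall the setup: $G$ is finitely generated virtually abelian with a finite-index normal free abelian subgroup $A$ of index $k$, fitting into the short exact sequence $1 \to A \to G \to \Delta \to 1$ with $\Delta = G/A$ finite of order $k$. The natural candidate for a free abelian normal subgroup of $[G,G]$ is the intersection $A \cap [G,G]$. The first step is to observe that $A \cap [G,G]$ is a subgroup of the finitely generated free abelian group $A$, hence is itself finitely generated free abelian, and it is normal in $[G,G]$ since $A$ is normal in $G$ (so $A \cap [G,G]$ is the intersection of two normal subgroups of $G$, thus normal in $G$ and a fortiori in $[G,G]$).

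Next I would bound the index $[\,[G,G] : A \cap [G,G]\,]$. The key tool is the second isomorphism theorem: $[G,G]/(A \cap [G,G]) \cong A[G,G]/A$, which is a subgroup of $G/A = \Delta$. Since $\Delta$ has order $k$, the quotient $[G,G]/(A \cap [G,G])$ has order at most $k$, which gives the desired bound on the index. This simultaneously shows that $[G,G]$ is finitely generated: it is an extension of the finitely generated abelian group $A \cap [G,G]$ by the finite group $A[G,G]/A$, and extensions of finitely generated groups by finitely generated groups are finitely generated. Combined with the fact that $A \cap [G,G]$ is free abelian of finite index, this establishes that $[G,G]$ is finitely generated virtually abelian.

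I do not anticipate a serious obstacle here, as the argument is essentially a bookkeeping exercise with standard isomorphism theorems; the paper itself flags that the result is not new and follows from the general fact that subgroups of virtually abelian groups are virtually abelian. The one point requiring a little care is ensuring that the chosen abelian subgroup $A \cap [G,G]$ is genuinely free abelian (which holds because subgroups of free abelian groups are free abelian) rather than merely virtually abelian, and confirming its normality in $[G,G]$. The index bound ``at most $k$'' is exactly what the second isomorphism theorem delivers, since $A[G,G]/A \leq \Delta$ forces its order to divide $k$. The mild subtlety worth spelling out is why the index can be strictly less than $k$: this occurs precisely because $A[G,G]$ may be a proper subgroup of $G$, equivalently because the image of $[G,G]$ in $\Delta$ may be a proper (necessarily derived) subgroup $[\Delta,\Delta]$ of $\Delta$, which is why the statement asserts index \emph{at most} $k$ rather than exactly $k$.
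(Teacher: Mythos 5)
Your proposal is correct and follows essentially the same route as the paper: take $A\cap[G,G]$ as the free abelian subgroup (free abelian since subgroups of finitely generated free abelian groups are such) and bound its index in $[G,G]$ via the second isomorphism theorem, $[G,G]/(A\cap[G,G])\cong A[G,G]/A\leq G/A$. The extra remarks you include on normality of $A\cap[G,G]$ in $[G,G]$ and on finite generation of $[G,G]$ are fine and merely make explicit points the paper leaves implicit.
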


 \begin{proof}
 	Let \(A\) be a free abelian normal subgroup of \(G\) of index \(k\). Note
 	that \(A\) is finitely generated. As subgroups of finitely generated free
 	abelian groups are finitely generated free abelian, we have that \(A \cap
 	[G, \ G]\) is finitely generated free abelian. Moreover, using the second
 	isomorphism theorem, we have that
 	\[
 	\left|\faktor{[G, \ G]}{A \cap [G, \ G]} \right| =
 	\left| \faktor{A [G, \ G]}{A} \right| \leq \left| \faktor{G}{A} \right|.
 	\]
 	Thus the index of \(A \cap [G, \ G]\) in \([G, \ G]\) is bounded above by
 	the index $k$ of \(A\) in \(G\).
 \end{proof}

 Now we construct a diagram of maps which we will use to
 compute the rank of the commutator subgroup of \([G, G]\) from the rank of
 \(G\). We show below that the diagram exists and is exact. Note that we do
 not need the diagram to commute, so we do not attempt to show that this is the
 case.

 \begin{lemma}
 	Let \(G\) be a virtually abelian group with a
 	finite-index free abelian normal subgroup \(A\), and let \(\Delta\) be
 	the quotient of \(G\) by \(A\). Then there exist homomorphisms
 	\(\phi, \ \psi, \ \theta\) such that the diagram in Figure
 	\ref{fig:VA_commutator} is exact. The remaining homomorphisms are the
 	natural inclusions and projections.
 	\begin{figure}
 		\label{fig:VA_commutator}
 		\caption{Exact diagram from virtually abelian group}
 		\[
 		\begin{tikzcd}
 			& 1 \arrow[d] & 1 \arrow[d] & 1 \arrow[d] & \\
 			1 \arrow[r] & A \cap \left[G,  G \right] \arrow[d, hook] \arrow[r, hook]
 			& \left[G, G \right] \arrow[d, hook] \arrow[r, two heads, "\phi"]
 			& \left[\Delta,  \Delta \right] \arrow[d, hook]
 			\arrow[r] & 1 \\
 			1 \arrow[r] & A \arrow[d, two heads] \arrow[r, hook] & G \arrow[d, two heads]
 			\arrow[r, two heads, "p"]
 			& \Delta \arrow[d, two heads] \arrow[r] & 1 \\
 			1 \arrow[r] & A / (A \cap \left[G,  G\right]) \arrow[d] \arrow[r, "\psi", hook]
 			& G / \left[G, G\right] \arrow[d] \arrow[r, two heads, "\theta"]
 			& \Delta / \left[\Delta, \Delta \right] \arrow[d]
 			\arrow[r] & 1 \\
 			& 1 & 1 & 1 &
 		\end{tikzcd}
 		\]
 	\end{figure}
 \end{lemma}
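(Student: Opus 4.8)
The plan is to realise the three undetermined maps as restrictions and induced quotients of the projection \(p\colon G\to\Delta\) from the middle row \eqref{ses}, and then to establish exactness of each row and column by a direct kernel/image computation, without ever needing commutativity of the squares. The three columns require no work beyond recognising them: the middle and right columns are simply the abelianisation short exact sequences \(1\to[G,G]\hookrightarrow G\twoheadrightarrow G/[G,G]\to1\) and \(1\to[\Delta,\Delta]\hookrightarrow\Delta\twoheadrightarrow\Delta/[\Delta,\Delta]\to1\), while the left column is the quotient sequence of \(A\) by \(A\cap[G,G]\) (which is a normal subgroup of \(A\), indeed of \(G\), as the intersection of two normal subgroups). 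The middle row is exactly the defining sequence \eqref{ses}.

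First I would define \(\phi\) to be the restriction \(p|_{[G,G]}\). Because \(p\) is surjective and a surjective homomorphism carries commutators onto commutators, one has \(p([G,G])=[\Delta,\Delta]\), so \(\phi\) is onto; its kernel is \([G,G]\cap\ker p=A\cap[G,G]\), giving exactness of the top row. Next I would obtain \(\theta\) by factoring the composite \(G\xrightarrow{p}\Delta\to\Delta/[\Delta,\Delta]\) through \(G/[G,G]\): this composite annihilates \([G,G]\) precisely because \(p([G,G])\subseteq[\Delta,\Delta]\), hence descends to \(\theta\), which is surjective as a composite of surjections. Finally I would obtain \(\psi\) by factoring \(A\hookrightarrow G\to G/[G,G]\); its kernel is \(A\cap[G,G]\), so it descends to an injection \(\psi\colon A/(A\cap[G,G])\hookrightarrow G/[G,G]\).

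The one genuine verification is exactness of the bottom row at its middle term, that is \(\im\psi=\ker\theta\). The inclusion \(\im\psi\subseteq\ker\theta\) is immediate since \(p(A)=1\). For the reverse inclusion the key input is again the identity \(p([G,G])=[\Delta,\Delta]\): if \(g[G,G]\in\ker\theta\) then \(p(g)\in[\Delta,\Delta]=p([G,G])\), so \(p(g)=p(c)\) for some \(c\in[G,G]\), whence \(gc^{-1}\in\ker p=A\) and \(g[G,G]=(gc^{-1})[G,G]\in\im\psi\). Together with the injectivity of \(\psi\) and surjectivity of \(\theta\) already noted, this yields exactness of the bottom row and completes the diagram.

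I expect the main (though modest) obstacle to be the repeated reliance on the fact that the image of the commutator subgroup under the surjection \(p\) is the \emph{entire} commutator subgroup \([\Delta,\Delta]\); this single fact underlies both the surjectivity of \(\phi\) and the middle exactness of the bottom row. Everything else is routine bookkeeping with kernels and images, and it is worth emphasising that none of the arguments use commutativity of the individual squares, nor do they use the virtually abelian or free abelian hypotheses, which here serve only to fix the ambient setting.
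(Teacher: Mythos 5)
Your proposal is correct and follows essentially the same route as the paper: you take \(\phi\) to be the restriction of \(p\) to \([G,G]\), obtain \(\psi\) and \(\theta\) as the induced maps on the quotients, and reduce everything to the single substantive verification, namely exactness of the bottom row at \(G/[G,G]\). One point is worth highlighting: your argument for \(\ker\theta\subseteq\im\psi\) is in fact more careful than the paper's own. At the corresponding step the paper asserts that \(p(g)\in[\Delta,\Delta]\) forces \(g\in A\), which is a non-sequitur, since \(g\in A\) would require \(p(g)=1\) rather than merely \(p(g)\in[\Delta,\Delta]\); what is true is only that the coset \([G,G]g\) admits a representative in \(A\). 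Your repair is exactly the right one: use \(p([G,G])=[\Delta,\Delta]\) to choose \(c\in[G,G]\) with \(p(c)=p(g)\), so that \(gc^{-1}\in\ker p=A\) and \([G,G]g=[G,G]gc^{-1}\in\im\psi\). Your closing remarks also match the paper's stance: the proof nowhere uses commutativity of the squares (the paper explicitly disclaims this), nor the free-abelian or finite-index hypotheses, which only matter in the surrounding results where ranks and indices are computed.
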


 \begin{proof}
 	(\(\phi\)): Let \(p \colon G \to \Delta\) be the natural projection.  We start by
 	defining \(\phi \colon [G, G] \to [\Delta, \Delta]\) as \(\phi([g_1,
 	h_1] \cdots [g_k, h_k]) = [p(g_1), p(h_1)] \cdots [p(g_k), p(h_k)]\); that is,
  \(\phi\) is the restriction of \(p\) to \([G, G]\). As the restriction of
  an epimorphism, \(\phi\) is also an epimorphism, and since \(\ker p = A\), it follows
  that \(\ker \phi = A \cap [G, G]\).

%
 	(\(\psi\)): Let \(\psi \colon A /  (A \cap [G, G]) \to G / [G, G]\) be
 	defined by \((A \cap [G, G])g \mapsto [G, G]g\) . We first show that
 	\(\psi\) is well-defined. Let \((A \cap [G, G])g = (A \cap [G, G])h
 	\in A/ A \cap [G, G]\). Then \(gh^{-1} \in A \cap [G, G]\), and so
 	\([G, G]g = [G, G]h\), and \(\psi\) is well-defined.

 	To see that \(\psi\) is injective, let \((A \cap [G, G])g \in \ker \psi\).
 	Then \([G, G]g = [G, G]\), and so \(g \in [G, G]\). Since \(g \in A\), we
 	have that \(g \in A \cap [G, G]\), and so \((A \cap [G, G])g = A \cap [G,
 	G]\).

 	(\(\theta\)): Let \(\theta \colon G / [G, G] \to \Delta / [\Delta, \Delta]\) be defined by
 	\([G, G] g \to [\Delta, \Delta] p(g)\). To see that \(\theta\) is well-defined, note
 	that if \([G, G]g = [G, G]h \in G / [G, G]\), then \(gh^{-1} \in [G, G]\),
 	and so \(p(g)(p(h))^{-1} \in [\Delta, \Delta]\) and \([\Delta, \Delta] p(g) = [\Delta, \Delta] p(h)\).
 	Since \(p\) is surjective, we have that \(\theta\) is. As \(p\) is a
 	homomorphism, \(\theta\) is.

 	To see that \(\ker \theta = \im \psi\), first note that
 	\(\im \psi = \{[G, G] g :  g \in A\}\). It is clear that if \(g \in A\),
 	then \(g \in \ker p\), and so \([G, G] p(g) = [G, G]\). Thus
 	\(\im \psi \subseteq \ker \theta\). Now let \([G, G]g \in \ker \theta\).
 	Then \([\Delta, \Delta] p(g) = [\Delta, \Delta]\) and so \(p(g) \in [\Delta, \Delta]\). Since \(p(g) \in
 	[\Delta, \Delta] \subseteq \Delta\), \(g \in A\), and so \([G, G]g \in \im \psi\).
 \end{proof}

 Using the exact diagram, we can compute the rank of the commutator subgroup of
 a virtually abelian group, which in turn allows us to find a finite
 generating set for the commutator subgroup. An immediate corollary of this is
 that the commutator subgroup is a rational set, and so every coset of it is.

 \begin{proposition}
 	\label{prop:VA_commuator_gen_set}
 	There is an algorithm that takes as input a finitely generated virtually
 	abelian group description (as in Definition \ref{def:VA}), and outputs
 	a finite generating set for the commutator subgroup.
 \end{proposition}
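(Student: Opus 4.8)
The plan is to exploit the short exact sequence
\[
1 \to A\cap[G,G] \to [G,G] \xrightarrow{\phi} [\Delta,\Delta]\to 1
\]
given by the top row of the diagram in Figure \ref{fig:VA_commutator}, together with the elementary fact that for any short exact sequence $1\to N\to H\to Q\to 1$ a generating set for $H$ is obtained from a generating set for $N$ together with any preimages in $H$ of a generating set for $Q$. Thus it suffices to produce, algorithmically, (a) a finite generating set for the free abelian group $A\cap[G,G]$, and (b) finitely many elements of $[G,G]$ whose images under $\phi$ generate the finite group $[\Delta,\Delta]$. I would then output the union of these two sets.

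Part (b) is the easy half, since $\Delta$ is finite and given by a presentation. First I would enumerate the elements of $\Delta$ and compute its commutator subgroup $[\Delta,\Delta]$, using that the (finite) set of all commutators $\{[\delta,\delta']:\delta,\delta'\in\Delta\}$ generates it by definition. Each such commutator lifts: choosing transversal elements $t_\delta,t_{\delta'}\in T$ with $p(t_\delta)=\delta$ and $p(t_{\delta'})=\delta'$, the element $[t_\delta,t_{\delta'}]$ lies in $[G,G]$ and satisfies $\phi([t_\delta,t_{\delta'}])=[\delta,\delta']$ by the definition of $\phi$ in the preceding lemma. Using the collecting function $f$ of Definition \ref{def:VA} we can write each $[t_\delta,t_{\delta'}]$ in subgroup-transversal normal form, yielding an explicit finite set of lifts.

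Part (a) is the main obstacle and is where integer linear algebra enters. The key observation is that $A\cap[G,G]$ is exactly the kernel of the restriction to $A$ of the abelianisation map $G\to G^\ab$, because an element of $A$ lies in $[G,G]$ precisely when its image in $G^\ab$ is trivial. To compute this kernel, I would first extract a finite presentation of $G$ on the generating set $B\cup T$ from the group description: the $b_i$ commute, each $t b_i t^{-1}$ equals the word in $B$ computed by $f$ (encoding the conjugation action of the transversal on $A$), and each product $t t'$ equals $f(t,t')$; uniqueness of the normal form guarantees that these relations present $G$. Abelianising the relations then gives an explicit presentation of the finitely generated abelian group $G^\ab$ together with the induced homomorphism $A=\Z^n\to G^\ab$ sending $e_i\mapsto \bar b_i$. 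A Smith normal form computation produces a basis of $\ker(A\to G^\ab)=A\cap[G,G]$ as a subgroup of $\Z^n$, which I record as words in $B$.

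The principal technical difficulty is concentrated in part (a): faithfully translating the data $f$ into abelianised relations and then solving the resulting system over $\Z$. Conceptually, however, the work is routine once it is set up, since passing to $G^\ab$ converts the a priori awkward intersection $A\cap[G,G]$ into the kernel of an explicitly computable homomorphism of finitely generated abelian groups. Combining the basis from (a) with the lifted commutators from (b), and invoking the extension fact above, produces the required finite generating set for $[G,G]$.
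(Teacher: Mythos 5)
Your proposal is correct, and it shares the paper's overall skeleton --- the short exact sequence $1 \to A\cap[G,G] \to [G,G] \to [\Delta,\Delta] \to 1$, with the output generating set assembled from generators of the kernel plus lifts covering the quotient --- but the two key computational steps are done genuinely differently. For the kernel, the paper does \emph{not} compute $A\cap[G,G]$ as the kernel of $A\to G^{\mathrm{ab}}$ by integer linear algebra; instead it abelianises a presentation of $G$ only to extract the torsion-free rank $m$ of $G^{\mathrm{ab}}$, deduces via Rank--Nullity that $A\cap[G,G]$ has rank $n-m$, and then \emph{searches}: it enumerates elements of $A$ in shortlex order, keeping those that lie in $[G,G]$ and are linearly independent of the elements kept so far, stopping after $n-m$ of them, and asserts that these shortlex-minimal elements form a basis. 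Your Smith/Hermite normal form computation of $\ker(\Z^n\to G^{\mathrm{ab}})$ is a one-shot, search-free procedure that sidesteps the subtle point the paper glosses over, namely that $n-m$ linearly independent elements of a rank-$(n-m)$ subgroup need not generate it (they could generate a finite-index subgroup); minimality of the chosen elements is what the paper leans on, with little justification. For the quotient, the paper lifts an entire transversal of $A\cap[G,G]$ in $[G,G]$ by tracing paths in the Cayley graph of $[\Delta,\Delta]$, whereas you lift only the commutators $[t_\delta,t_{\delta'}]$, i.e.\ preimages of a generating set of $[\Delta,\Delta]$; both suffice for the standard extension fact you invoke, and yours is leaner. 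The one place where your write-up is thinner than it should be is the reduction of the kernel computation to ``a Smith normal form computation'': with $G^{\mathrm{ab}}=\Z^{n+|T|}/R$ presented by the abelianised relators, the kernel you need is $\{v\in\Z^n : (v,0)\in R\}$, which requires intersecting the relator subgroup $R$ with a coordinate subgroup (e.g.\ computing $M_1(\ker_{\Z} M_2)$ for the relevant blocks of the relation matrix) rather than a single SNF of the relation matrix; this is still routine and effective, so it is a presentational gap, not a mathematical one.
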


 \begin{proof}
 	Let \(G\) be a finitely generated virtually abelian group, and let \(A\) be
 	a finite-index normal free abelian subgroup of \(G\), and \(T\) be a right
 	transversal for \(A\) in \(G\). Consider the diagram shown in Figure
 	\ref{fig:VA_commutator}. First note that we can construct a presentation for
 	\(G\) using the transversal and free abelian generators. Using this presentation, we
 	can compute a presentation for the abelianisation \(G / [G, G]\), by adding
 	the commutator relation for each pair of generators. Using the fact that \(G
 	/ [G, G]\) is abelian, we can standardise this presentation by grouping
 	occurences of each generator within each non-commutator relation. By
 	removing redundent relations, we can count the number of torsion-free
 	generators to compute the torsion-free rank \(m\) of \(G / [G, G]\).

 	Note that as \(\Delta / [\Delta, \ \Delta]\) is finite, the Rank-Nullity Theorem tells us
 	that the torsion-free rank of \(A / (A \cap [G, G])\) must also be \(m\).
 	Let \(n\) be the free abelian rank of \(A\). Again using the Rank-Nullity
 	Theorem, we have that the free abelian rank of \(A \cap [G, G]\) must be
 	\(n - m\).

 	We now compute a free abelian basis for \(A \cap [G, G]\) by initially
 	setting our basis set to be \(B = \emptyset\), and then iterating through
 	elements of \(A\), represented (uniquely) by shortlex-minimal geodesic
 	words in their shortlex order and adding an element to \(B\) whenever it
 	both lies in \([G, G]\) and is linearly independent from all the elements
 	of \(B\). We terminate this once we have \(n - m\) such elements. We will
 	then have a set of minimal elements of \(A\) that lie in \(A \cap [G, G]\)
 	and are all linearly independent, and thus this must be a basis for
 	\(A \cap [G, G]\). We now have a (finite) generating set \(B\) for \(A \cap
 	[G, G]\).

 	We now simply need to compute a (right) transversal for \(A \cap [G, G]\)
 	in \([G, G]\). Note that this will be finite, as \([\Delta, \Delta]\) is. Since \(\Delta\)
 	is finite, we can compute the right Cayley graph from the products of
 	elements of \(T\) (which we know). To choose a transversal for \(A \cap [G,
 	G]\), we simply need to trace one path in the Cayley graph of \([\Delta, \Delta]\)
 	from \(1_{[\Delta, \Delta]}\) to every state, and take our transversal \(U\) to be to
 	be the set of the natural lift of each word traced. We have now computed
 	\(U \cup B\), which is a finite generating set for \([G, G]\).
 \end{proof}

 Now that we know that the commutator subgroup is rational, it follows that
 all its cosets are rational, and so abelianisation constraints (when converted
 into the form of Lemma~\ref{lem:ab-const-form}) are instances of rational
 constraints.

 \begin{theorem}
 	\label{thm:abelianisation_cons}
 	If \(G\) is a finitely generated virtually abelian group with a finite generating
 	set \(\Sigma\), then every abelianisation constraint is a rational subset of
 	\(G\).

 	Moreover, a rational expression for an abelianisation constraint can be
 	computed from any coset representative \(h\) of the abelianisation constraint.
 \end{theorem}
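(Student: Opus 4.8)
The plan is to combine the two preceding results into a short corollary-style argument. First I would invoke Lemma~\ref{lem:ab-const-form} to reduce an arbitrary abelianisation constraint to a single coset membership constraint of the form \(X \in [G, G]h\) for some \(h \in G\); this is precisely the normal form that lemma produces. Hence it suffices to show that each such coset is an effectively constructible rational subset of \(G\), and that a rational expression for it can be produced from the representative \(h\).

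Next I would apply Proposition~\ref{prop:VA_commuator_gen_set} to obtain, algorithmically, a finite generating set \(\{c_1, \ldots, c_m\}\) for the commutator subgroup \([G, G]\), where each \(c_i\) is expressed as a word \(w_i \in S^\ast\) over \(S = \Sigma^{\pm 1}\). The key observation is then that any finitely generated subgroup is rational: the regular language \(R = (w_1 \mid w_1^{-1} \mid \cdots \mid w_m \mid w_m^{-1})^\ast \subseteq S^\ast\) satisfies \(\pi(R) = [G, G]\), since its image under \(\pi\) is exactly the set of all products of the \(c_i^{\pm 1}\), which equals \([G, G]\) because that set is the subgroup generated by the \(c_i\). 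Thus \([G, G]\) is an effectively constructible rational set, with \(R\) computed directly from the output of Proposition~\ref{prop:VA_commuator_gen_set}.

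Finally, to pass from the subgroup to the coset, I would fix a word \(v \in S^\ast\) representing \(h\) and note that \([G, G]h = \pi(Rv)\), where \(Rv\) is the concatenation of the regular language \(R\) with the single word \(v\) and is therefore regular. By the closure of effectively constructible rational sets under concatenation (recorded after Definition~\ref{def:rateffcon}), \([G, G]h\) is an effectively constructible rational set, computed from the given coset representative \(h\), as claimed.

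I do not expect a genuine obstacle at this stage: the substantial work, namely computing a finite generating set for \([G, G]\), is already carried out in Proposition~\ref{prop:VA_commuator_gen_set}, and the reduction to coset membership is supplied by Lemma~\ref{lem:ab-const-form}. The only remaining ingredient is the standard fact that a finitely generated subgroup of any group is a rational subset, which follows immediately from the closure properties of rational sets; the sole point requiring care is that rationality of \([G, G]\) transfers to every coset via concatenation with a word representing the specified representative.
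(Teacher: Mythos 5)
Your proposal is correct and follows essentially the same route as the paper: reduce via Lemma~\ref{lem:ab-const-form} to coset membership in \([G,G]h\), compute a finite generating set for \([G,G]\) using Proposition~\ref{prop:VA_commuator_gen_set}, close it under inverses to realise \([G,G]\) as the image of a Kleene-star regular language, and concatenate with a word representing \(h\). Your write-up simply makes explicit the details (the regular language \(R\) and the concatenation step) that the paper's terse proof leaves implicit.
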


 \begin{proof}

 	To effectively compute a rational expression for \([G, \ G]h\), we simply
 	need to compute a monoid generating set for \([G, \ G]\), which can be done
 	using the finite generating set from \cref{prop:VA_commuator_gen_set} and
 	closing it under inverses (if necessary).
 \end{proof}

\section{Context-free sets in virtually abelian groups}\label{sec:CF}

In this section we consider context-free sets (see Definition \ref{def:rat_reg_con}(3) and (4)) in virtually abelian groups. In some of the literature \cite{Herbst, Carvahlo} our `recognisably context-free sets' are called simply `context-free', while our `context-free sets' are called `algebraic'. However, we avoid the terminology `algebraic', as this can be confused with `algebraic set' in the sense of `solution set to an equation' in a group. Since, using our terminology, a recognisably context-free set is context-free, all results here will hold for recognisably context-free sets automatically because we prove them for context-free sets, and we do not refer to them additionally.

It is known that context-free sets are in fact rational in virtually abelian groups (\cite{Herbst, Carvahlo}). This can be seen in two steps. First, context-free sets in finitely generated free abelian groups coincide with rational sets via semilinear sets, by Parikh's theorem (see for example \cite{groups_langs_aut}, Theorem 2.6.23). Then every context-free set $C$ in a virtually abelian group $G$ is the finite union of context-free sets in the cosets of a free abelian finite index subgroup (see for example \cite[Prop. 3.11]{Carvahlo}); since the latter are rational by Parikh's result, the set $C$ in $G$ is rational. We make all this effective in Theorem \ref{thm:CF}.

 Since context-free sets are rational in virtually abelian groups, the Diophantine Problem with (recognisably) context-free constraints reduces (again) to a known decidable problem: the Diophantine Problem with rational constraints in virtually abelian groups \cite{Levine}. We'd like to contrast this to the Diophantine Problem with context-free constraints in other classes of groups, where imposing such constraints leads immediately to undecidability; this is because the context-free subset membership problem on its own is undecidable.
For example, the context-free set membership problem is undecidable in groups containing a free non-abelian subgroup. This follows from the result below, linking membership in context-free sets to membership in rational sets of direct products, together with the fact that membership in finitely generated, and thus rational, subgroups of $F_2 \times F_2$ is undecidable by Mihailova \cite{Mihailova}.

\begin{theorem}\cite[Corollary 6.3]{KSS}
The context-free membership problem is
decidable in a group $G$ if and only if the rational subset membership is
decidable in $G\times F_2$, where $F_2$ is the free group on two generators.
\end{theorem}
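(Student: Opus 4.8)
The statement is an equivalence of two decision problems, so the plan is to give reductions in both directions between context-free subset membership in $G$ and rational subset membership in $G\times F_2$. Fix generating sets: let $S=\Sigma^{\pm}$ generate $G$ and let $t_1,t_2$ generate $F_2$, so that $Y=S\sqcup\{t_1^{\pm},t_2^{\pm}\}$ generates $G\times F_2$ with the $S$-letters acting trivially on the second coordinate and the $t$-letters trivially on the first. Write $\pi_1\colon Y^\ast\to G$ and $\pi_2\colon Y^\ast\to F_2$ for the two coordinate evaluations (so $\pi_1$ restricted to $S^\ast$ is the usual projection $\pi$), and recall from \cref{sec:FL} that context-free and rational subsets of a group are respectively the homomorphic images of context-free and regular languages over its generators.

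The direction ``context-free membership in $G$ decidable $\Rightarrow$ rational membership in $G\times F_2$ decidable'' is the more routine one. Given a rational subset $\mathcal R\subseteq G\times F_2$, presented as the image of a regular language $R\subseteq Y^\ast$, together with a target $(g,f)$, membership $(g,f)\in\mathcal R$ asks whether some $w\in R$ has $\pi_1(w)=g$ and $\pi_2(w)=f$. The set of $\{t_1^{\pm},t_2^{\pm}\}$-words evaluating to $f$ in $F_2$ is a coset of the (context-free) word problem of $F_2$, hence context-free; pulling it back along the homomorphism $Y^\ast\to\{t_1^{\pm},t_2^{\pm}\}^\ast$ that deletes the $S$-letters keeps it context-free, and intersecting with the regular language $R$ yields a context-free language $R_f=\{w\in R\colon \pi_2(w)=f\}$. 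Then $\pi_1(R_f)$ is a context-free subset of $G$, and $(g,f)\in\mathcal R$ if and only if $g\in\pi_1(R_f)$, so a single query to the context-free membership oracle for $G$ settles the question. All of these operations (coset of a word problem, inverse homomorphism, intersection with a regular set, homomorphic image) are effective and preserve context-freeness.

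For the converse I start from a context-free subset $C=\pi_1(L)\subseteq G$ with $L$ context-free over $S$, and a target $g$, aiming to decide $g\in C$ using a rational membership oracle for $G\times F_2$. The Chomsky--Sch\"utzenberger theorem lets me write $L=\psi(D_N\cap R)$, where $D_N$ is the Dyck language on $N$ bracket pairs, $R$ is regular, and $\psi$ is a homomorphism. Sending each opening bracket to a generator $x_i$ of a free group $F_N$ and each closing bracket to $x_i^{-1}$, and fixing an embedding $F_N\hookrightarrow F_2$ (available since $F_2$ contains free subgroups of every finite rank), I feed the automaton for $R$ through the homomorphism that records in the first coordinate the $G$-value $\pi_1(\psi(\cdot))$ of each bracket and in the second coordinate its image in $F_2$. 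This produces a rational subset $\mathcal R\subseteq G\times F_2$, and the question $g\in C$ becomes the single instance $(g,1)\in\mathcal R$ of rational subset membership.

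The heart of the argument---and the step I expect to be the main obstacle---is making this last encoding \emph{faithful}: I need triviality of the $F_2$-coordinate to correspond exactly to the properly nested bracketings counted by $D_N$, rather than to the larger set of bracket words that merely freely reduce to the identity. A free group has no zero element with which to annihilate an illegal pop, so a careless encoding over-accepts at the level of languages, and one must check that after collapsing to $G$ the slice $\{h\colon (h,1)\in\mathcal R\}$ recovers precisely $C$ and not a strictly larger set. This is controlled by choosing the Chomsky--Sch\"utzenberger bracketing together with the regular language $R$ so that, within $R$, free reduction in $F_N$ can only proceed by matching genuinely nested pairs; the embedding $F_N\hookrightarrow F_2$ and the effectivity of each construction then complete the reduction, and combining it with the previous paragraph yields the claimed equivalence.
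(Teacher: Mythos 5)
First, a point of reference: the paper does not prove this statement at all --- it is quoted directly from \cite[Corollary 6.3]{KSS} and used as a black box --- so there is no internal proof to compare yours against; your attempt has to be judged on its own merits. Your reduction of rational subset membership in $G\times F_2$ to context-free membership in $G$ (the direction you call routine) is correct and complete in outline: it uses only that the word problem of $F_2$ and its coset languages are context-free, plus effective closure of context-free languages under inverse homomorphism, intersection with regular languages, and homomorphic image.

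The converse direction, however, has a genuine gap, and it sits exactly where you flagged it. The problem is worse than a technical check: \emph{no} choice of bracketing can work in the way you suggest. If $\iota$ is any assignment of bracket letters to group elements, extended multiplicatively, then $\iota(a_i\bar a_i)=1$ forces $\iota(\bar a_i a_i)=1$, because in a group $uu^{-1}=1$ if and only if $u^{-1}u=1$. So group triviality of $\iota(w)$ can never, by itself, distinguish nested from anti-nested cancellation; the entire burden falls on the regular language $R$. What you actually need is a Chomsky--Sch\"utzenberger decomposition $L=\psi(D_N\cap R)$ with the additional property $\widehat{D}_N\cap R=D_N\cap R$ (where $\widehat{D}_N$ is the two-sided Dyck language, i.e.\ the word problem of $F_N$), equivalently $L=\psi(\widehat{D}_N\cap R)$. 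The existence of such a decomposition for every context-free $L$ is precisely the statement that the word problem of a free group of rank at least $2$ generates the rational cone of context-free languages --- equivalently, that valence automata ($F_2$-automata) accept exactly the context-free languages. That is a true but genuinely nontrivial theorem: it was first asserted by Dassow and Mitrana with an incomplete proof, and correct proofs were given later by Corson and by Kambites; it is also exactly the ingredient on which Corollary 6.3 of \cite{KSS} rests. Your sketch therefore assumes the crux of the theorem rather than proving it. The rest of your construction (embedding $F_N\hookrightarrow F_2$ effectively, pushing the automaton for $R$ to a rational subset of $G\times F_2$, and slicing at $(g,1)$) is fine once that ingredient is granted, so the right fix is either to cite the Corson--Kambites result explicitly or to reproduce one of its proofs; a sentence saying the bracketing can be ``chosen'' carefully does not fill the hole.
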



%
%


 The convention we employ here is that all context-free languages are given by context-free grammars in Chomsky normal forms (see, for example, \cite[Section 2.6.13]{groups_langs_aut}). In the proof of Theorem \ref{thm:CF} we will need to translate certain languages and preserve their complexity, and we achieve this using transducers, which we review below following \cite{KSS}.

\begin{definition}
Let $\Sigma$ and $\Omega$ be two finite alphabets. A finite automaton over $\Sigma^* \times \Omega^*$ is called a \emph{finite transducer} from $\Sigma^*$ to $\Omega^*$. A subset recognised by a finite transducer, that is, a rational set of $\Sigma^* \times \Omega^*$, is a \emph{finite transduction}. If $\rho \subseteq \Sigma^* \times \Omega^*$ is a transduction and $L \subseteq \Sigma^*$, then the \emph{image of $L$ under $\rho$} is the language $$\rho(L)=\{v \in \Omega^* : (u,v)\in \rho \textrm{\ for some\ } u\in L\} \subseteq \Omega^*.$$
\end{definition}

One can think of a transducer as a finite state automaton where the labels on the edges are pairs of words from $\Sigma^* \times \Omega^*$ instead of single letters as in a standard automaton. Then a path's label in the transducer can be seen as the graph of a function mapping words in $\Sigma^*$ to words in $\Omega^*$.

\begin{proposition}\cite[Ch.III,~Corollary 4.2]{Berstel}
Each rational transduction preserves rational and context-free languages. That is, if $L$ is a language over a finite alphabet $\Sigma$, then for each rational transduction $\rho$, $\rho(L)$ is rational if $L$ is rational, and $\rho(L)$ is context-free if $L$ is context-free.
\end{proposition}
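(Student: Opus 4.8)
The plan is to reduce the statement to a handful of standard closure properties of regular and context-free languages, using the decomposition of a rational transduction into two morphisms and a regular language (Nivat's theorem). First I would record Nivat's normal form: a relation $\rho \subseteq \Sigma^* \times \Omega^*$ is a rational transduction if and only if there exist a finite alphabet $\Gamma$, monoid homomorphisms $g \colon \Gamma^* \to \Sigma^*$ and $h \colon \Gamma^* \to \Omega^*$, and a regular language $K \subseteq \Gamma^*$ such that
\[
\rho = \{(g(w), h(w)) \colon w \in K\}.
\]
This is precisely the statement that the edge labels of a transducer can be factored through an intermediate alphabet recording which edge is traversed, with $g$ reading off the $\Sigma$-components, $h$ the $\Omega$-components, and $K$ the set of accepting paths. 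Obtaining this decomposition from the transducer (and conversely) is the one genuinely structural step; everything afterward is bookkeeping.

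Given the decomposition, I would compute the image directly:
\[
\rho(L) = \{h(w) \colon w \in K,\ g(w) \in L\} = h\bigl(K \cap g^{-1}(L)\bigr).
\]
The proof then splits according to the type of $L$, and in each case I would apply three closure properties in sequence. If $L$ is regular, then $g^{-1}(L)$ is regular because regular languages are closed under inverse homomorphism; $K \cap g^{-1}(L)$ is regular because regular languages are closed under intersection; and $h(K \cap g^{-1}(L))$ is regular because regular languages are closed under homomorphic image. Hence $\rho(L)$ is regular. If instead $L$ is context-free, the identical computation applies, now invoking that context-free languages are closed under inverse homomorphism, under intersection with a regular language (the product construction of a pushdown automaton with a finite automaton), and under homomorphic image. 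Hence $\rho(L)$ is context-free.

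I expect the main obstacle to be establishing Nivat's normal form cleanly, since one must verify both directions of the decomposition and confirm that the formula $\rho(L) = h\bigl(K \cap g^{-1}(L)\bigr)$ is an \emph{equality} of languages rather than merely an inclusion. The closure properties themselves are textbook, but among them the only nontrivial one is closure of context-free languages under intersection with regular languages; for a self-contained argument I would spell out the triple construction that tracks the pushdown store alongside the two finite-state components. An alternative route that avoids Nivat's theorem is to argue directly on the transducer, intersecting the input track with a finite (respectively pushdown) automaton for $L$ and projecting onto the output track; this is essentially the same proof with the intermediate alphabet $\Gamma$ absorbed into the automaton's transition labels, and I would fall back on it only if the morphism decomposition proved awkward to cite.
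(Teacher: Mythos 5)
Your proof is correct and is essentially the argument found in the cited source: the paper itself gives no proof of this proposition (it is quoted directly from Berstel, Ch.~III, Corollary 4.2), and Berstel's proof proceeds exactly as you do, via Nivat's decomposition $\rho(L) = h\bigl(K \cap g^{-1}(L)\bigr)$ followed by the standard closure properties of regular and context-free languages under inverse homomorphism, intersection with a regular language, and homomorphic image. No gaps to report.
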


\begin{lemma} \cite[Lemma 3.3]{KSS}\label{transducerKSS}
Let $G$ be a finitely generated group with a finite-index free normal subgroup $A$, and let $X$ and $Y$ be finite generating sets of $G$ and $A$, respectively.

\begin{itemize}
\item[1.] Then there is an explicit, computable, rational transduction $\sigma\subseteq Y^* \times X^*$ such that $W_X(h)=\sigma(W_Y(h))$, for every $h \in A$, where $W_Y(h)$ and $W_X(h)$ are the sets of all words that represent element $h$ in $A$ over $Y$ and $X$, respectively.
\item[2.] Furthermore, $\sigma^{-1}: X^* \rightarrow Y^*$ is a partial function such that, for $w \in X^*$, $\sigma^{-1}(w)$ is defined if and only if $w$ represents an element of $A$, in which case $\sigma^{-1}(w)$ is an element of $Y^*$ representing the same element of $A$ as $w$.

\end{itemize}
\end{lemma}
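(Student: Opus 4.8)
The plan is to construct the partial function $\sigma^{-1}$ first, as an explicit deterministic finite transducer performing Reidemeister--Schreier rewriting, and then to set $\sigma=(\sigma^{-1})^{-1}$. The key observation is that statement~2 is the substantive one: once $\sigma^{-1}$ is a partial function, defined exactly on the $X$-words representing elements of $A$ and returning a $Y$-word for the same element, statement~1 becomes automatic, since for $h\in A$ we get $\sigma(W_Y(h))=\{w\in X^*\colon \sigma^{-1}(w)=_A h\}=\{w\colon w=_A h\}=W_X(h)$, using that $\sigma^{-1}(w)=_A w$ on its domain. To set up the rewriting I would fix a finite right transversal $T$ for $A$ in $G$ with $1\in T$, writing $\bar g\in T$ for the representative of the coset $Ag$; for each $t\in T$ and generator $x\in X$ I put $a_{t,x}=tx\,\overline{tx}^{-1}\in A$, the \emph{Schreier generators}. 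Because $Y$ generates $A$, I would then fix, for each of the finitely many $a_{t,x}$, a word $\beta(a_{t,x})\in Y^*$ representing the same element of $A$.

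The engine of the argument is the telescoping identity: for $w=x_1\cdots x_k\in X^*$, setting $t_0=1$ and $t_j=\overline{x_1\cdots x_j}$, one has
\[
 w =_G a_{t_0,x_1}\,a_{t_1,x_2}\cdots a_{t_{k-1},x_k}\cdot t_k,
\]
where the product of Schreier generators lies in $A$ and $t_k\in T$; hence $w$ represents an element of $A$ precisely when $t_k=1$. I would then take $\sigma^{-1}$ to be the deterministic transducer $M$ with state set $T$, start and unique accept state $1$, and transition from $t$ on input letter $x$ that outputs $\beta(a_{t,x})$ and moves to $\overline{tx}$. Determinism guarantees that $M$ computes a partial function; the telescoping identity shows its domain is exactly the set of $X$-words lying in $A$, and that $M(w)=_A w$ on that domain, which is precisely statement~2. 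Taking $\sigma=(\sigma^{-1})^{-1}$ (swap the two tapes of $M$) yields a rational transduction $\sigma\subseteq Y^*\times X^*$, and statement~1 follows as above. Note that $\sigma$ itself is genuinely multivalued even though $\sigma^{-1}$ is a function, which is why one must not try to build $\sigma$ as the full ``same-element'' relation $\{(u,w)\colon u=_G w\}$: that relation need not be rational, whereas the canonical Schreier rewriting on the $\sigma^{-1}$ side is plainly finite-state.

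The remaining work is effectiveness, and this is where I expect the only real friction. Computing $T$ together with the coset action $t\mapsto\overline{tx}$ is routine from the group data (for the groups of this paper, via the multiplication function $f$ of Definition~\ref{def:VA} and the finiteness of $\Delta$), and each Schreier generator $a_{t,x}$ can be written as an element of $A$ in the same way. The genuinely computational step is producing the words $\beta(a_{t,x})\in Y^*$: given an element of $A$ presented as a word over $X$ that happens to lie in $A$, one must express it over the prescribed generating set $Y$. For free abelian $A$ this is linear algebra over $\Z$, and for free $A$ it is a normal-form or Stallings-folding computation, so in both cases it is effective. The main obstacle is therefore not conceptual but simply verifying that this change-of-generators computation can be performed uniformly for all finitely many Schreier generators, so that the whole transducer $M$, and hence $\sigma$, is explicitly constructible.
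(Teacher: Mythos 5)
Your proposal is correct and is essentially the paper's own construction: the paper (following [KSS]) builds $\sigma$ from the Schreier coset graph of $G/A$, with the cosets $At_i$ as states, $A$ as both initial and final state, and edges $(At_i,At_ix)$ labelled $(w_{t_i,x},x)$, where your Schreier generator word $\beta(a_{t,x})$ is exactly the paper's $w_{t_i,x}$ representing $t_ix t_j^{-1}\in A$. Your additional details (the telescoping Reidemeister--Schreier identity for correctness, determinism of the $X$-reading direction giving the partial-function property, and deducing statement 1 from statement 2) are the standard justification the paper delegates to the citations, so the two approaches coincide.
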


The transducer $\sigma$ in Lemma \ref{transducerKSS} is based on the Schreier coset graph of $G/A$. The vertices correspond to the finitely many cosets $At_i$ of $A$ in $G$ with $t_i$ in the transversal $T$, and $A$ is both initial and final state. Edges have the form $(At_i,At_ix )$, for $x \in X$, with labels $(w_{t_i,x}, x)$, where $w_{t_i,x} \in Y^*$ is a word representing the unique element $g \in A$ such that $t_ix=gt_j$ and $At_ix=At_j$ (see the proof of \cite[Lemma 3.3]{KSS} and \cite[Section 5]{Lohrey} for further details).

\begin{theorem}\label{thm:CF}
Any context-free set $C$ in a finitely generated virtually abelian group $G$ is rational and, moreover, a rational expression for $C$ can be effectively computed.

\end{theorem}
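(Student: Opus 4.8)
The plan is to make effective the two-step argument sketched before the statement: first split \(C\) according to the finitely many cosets of \(A\), and then show that on each coset the computation reduces, after a change of generating set, to the free abelian case where Parikh's theorem applies. Write \(C=\pi(C')\) for a context-free language \(C'\subseteq S^*\) given by a grammar in Chomsky normal form, and recall the short exact sequence \eqref{ses} with transversal \(T\) and finite-index free abelian normal subgroup \(A\).

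First I would decompose \(C\) along cosets. For each \(t\in T\), the set of \(S\)-words representing elements of the coset \(At\) is regular: it is recognised by the Schreier coset automaton underlying the transducer of \cref{transducerKSS}, with initial state the coset \(A\) and single accept state \(At\), and this automaton is computable from the group description. Since context-free languages are effectively closed under intersection with regular languages, intersecting \(C'\) with this regular language produces a context-free \(C'_t\) with \(\pi(C'_t)=C\cap At\), so that \(C=\bigcup_{t\in T}\pi(C'_t)\). Fixing an \(S\)-word \(\bar t\) representing \(t\) and concatenating, \(C''_t:=C'_t\,\bar t^{-1}\) is context-free with image \(D_t:=\pi(C''_t)=(C\cap At)\,t^{-1}\subseteq A\).

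The key step is to pass from words over \(S\) to words over a generating set \(Y\) of \(A\). Here it is essential that \(G\) need not be abelian: a context-free language over \(S\) cannot be replaced by a Parikh-equivalent one without changing its image in \(G\), since reshuffling the letters of an \(S\)-word alters the group element. However, every word of \(C''_t\) represents an element of \(A\), so the inverse transducer \(\sigma^{-1}\) of \cref{transducerKSS} is defined on all of \(C''_t\); being a rational transduction it sends \(C''_t\) to a context-free language \(E_t:=\sigma^{-1}(C''_t)\subseteq Y^*\) representing the same group elements, so that the projection \(\pi_Y\colon Y^*\to A\) satisfies \(\pi_Y(E_t)=D_t\). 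Now \(A\) is free abelian, so \(\pi_Y\) factors through the Parikh map \(\ab\colon Y^*\to\N^{|Y|}\) followed by a fixed linear map; in particular \(\pi_Y(E_t)\) depends only on \(\ab(E_t)\). Applying the effective form of Parikh's theorem (\cref{Parikh}) yields a regular language \(R_t\subseteq Y^*\) with \(\ab(R_t)=\ab(E_t)\), whence \(\pi_Y(R_t)=\pi_Y(E_t)=D_t\).

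Finally I would reassemble everything over \(S\). Substituting each generator \(y\in Y\) by a fixed \(S\)-word \(\bar y\) representing it is a monoid homomorphism preserving regularity and the image in \(G\), producing a regular language \(\hat R_t\subseteq S^*\) with \(\pi(\hat R_t)=D_t\); then \(\pi(\hat R_t\,\bar t)=D_t\,t=C\cap At\). Taking the finite union over \(t\in T\) gives a regular language whose image is \(C\), and since every operation above is effective, a rational expression for \(C\) is computed. I expect the main obstacle to be the change-of-generators step: one must check carefully that the transducer faithfully records the \(A\)-element of each word and that the passage to \(Y\)-words is exactly what legitimises the abelianisation (Parikh) simplification — this is the point where the non-commutativity of \(G\) would otherwise block a direct application of \cref{Parikh}.
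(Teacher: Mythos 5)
Your proposal is correct and follows essentially the same route as the paper's proof: decompose $C$ along the cosets of $A$, intersect with the (computable) regular language of words representing elements of $A$ (or $At$), translate into $A$, use the transducer of \cref{transducerKSS} to rewrite the resulting context-free language over a generating set of $A$, and invoke the effective Parikh theorem (\cref{Parikh}), which is legitimate precisely because the images now lie in the abelian group $A$. The only cosmetic differences are the order of the intersect/translate steps and your use of a letter-to-word substitution homomorphism (rather than the transduction $\sigma$ itself) to return to words over the original generating set, both of which are equally valid and effective.
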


\begin{proof}
Let $G$ be a finitely generated virtually abelian group with a finite-index free abelian normal subgroup $A$, and a finite transversal $T$, as in Definition \ref{def:VA}. Since $A$ is a subgroup of finite index it is a recognisable set, as are all cosets $At$ with $t \in T$. Let $X$ and $Y$ be finite generating sets of $G$ and $A$, respectively. Moreover, let $A_X$ be the preimage of $A$ in $X^*$, that is, $A_X=\pi^{-1}(A)$. The set $A_X$ is regular, and a finite state automaton $M_{A_X}$ over the alphabet $X$ can be effectively computed to produce $A_X$: we can construct the automaton for $\pi^{-1}(A)$ since we know a presentation for the quotient $\Delta=G/A$ (Definition \ref{def:VA}(3)) with respect to the generating set $\bar{X}=\{\bar{x}=xA : x \in X\}$; then $\pi^{-1}(A)$ is just the word problem of $\Delta$ with respect to $\bar{X}$, and the words representing the identity in $\Delta$ can be read off the finite Cayley graph of $\Delta$. In fact, the automaton $M_{A_X}$ can be taken to be $Cay(\Delta, \bar{X})$ and (re)using $x$ for every generator $\bar{x}$.

 Let $C$ be a context-free set in $G$ over $X$. 
 We write $C = \bigcup_{t \in T} (C \cap At)$, and it suffices to show that each $C \cap At$ is effectively rational over $X$. In fact, $C \cap At$ is effectively rational if and only if ($C \cap At)t^{-1}  = Ct^{-1} \cap A$ is. So it suffices to prove the result for $Ct \cap A$, $t\in T$. The set $Ct$ is context-free as the translate of $C$; that is, there is a context-free set $C_t \subset X^*$ explicitly given by a grammar $\Gamma_{C_t}$ in Chomsky normal form such that $\pi(C_t)=Ct$.


We claim that $Ct \cap A$ is effectively rational over $Y$, the generating set of $A$. To show this, let $L_{X, t}=C_t \cap A_X$; this is a context-free language over $X$ that can be effectively computed from $M_{A_X}$ and $\Gamma_{C_t}$, and such that $\pi(L_{X, t})=Ct \cap A$. By Lemma \ref{transducerKSS} there is an explicit, computable, rational transduction $\sigma\subseteq Y^* \times X^*$ such that $A_X=\sigma(A_Y)$, where $A_Y$ is the language of all words over $Y$ that represent elements in $A$. That is, $\sigma^{-1}: X^* \rightarrow Y^*$ is a partial function such that, for $w \in X^*$, $\sigma^{-1}(w)$ is defined if and only if $w$ represents an element of $A$, in which case $\sigma^{-1}(w)$ is an element of $Y^*$ representing the same element of $A$ as $w$. Moreover, $\sigma^{-1}(L_{X,t})$ produces a language, say $L_{Y,t}$, of words over $Y$ consisting of words that represent exactly the elements in $Ct\cap A$. Since being context-free is preserved by preimages of rational transductions, $L_{Y,t}$ is context-free and a grammar for it can be effectively computed since the rational transduction $\sigma$ is explicit.

 Now $L_{Y,t}$ is context-free and $\pi(L_{Y,t})$ and $\ab(L_{Y,t})$ represent the same subset of elements of $A$, namely $Ct\cap A$. By the strengthened version of Parikh's result (Theorem \ref{Parikh}), $\ab(L_{Y,t})$ is a (computably) semilinear set over $Y$, and a finite state automaton $M_{L_t}$ for a regular language $R_t$ over $Y$ with the same Parikh image is computable, therefore $Ct \cap A$ is effectively rational since it can be obtained from $M_{L_t}$. This proves the claim that $Ct \cap A$ is effectively rational over $Y$. Finally, we can apply the transduction $\sigma$ to $R_t \subset Y^*$ and get $Ct\cap A$ effectively rational over $X$, which proves therefore the theorem.
\end{proof}

%
%

\section{Order Constraints}

In this section we look at lexicographic order of solutions to equations, when these are written in the standard normal form. We then also consider shortlex order, using the length arguments from Section \ref{sec:length}. The lexicographic and shortlex order constraints compare
normal form words, as defined below.

\begin{definition}\label{def:orders}
	Let \(G\) be a virtually abelian group given as in Definition \ref{def:VA}. Fixing a finite-index free abelian
	normal subgroup \(A\) and a (finite) right transversal \(T\), we obtain the
	subgroup transversal normal form \(\eta\). For a fixed ordering on the free
	abelian basis of \(A\), and a fixed ordering on \(T\), we obtain a
	lexicographic ordering \(\leq_\text{lex}\) on the normal form words. By
	ordering the normal form words using shortlex instead, we obtain another
	ordering \(\leq_\text{shortlex}\).
\end{definition}

Note that both of these are dependent on the choice of normal subgroup and
transversal used to define \(\eta\), and the orderings on the free abelian
basis and transversal, and thus there will be multiple lexicographic and
shortlex orderings on a given virtually abelian group.

\begin{remark}
  Lexicographic or shortlex are total orders on the elements of a virtually
  abelian group, but are not left (or right) invariant orders, i.e. they are
  not invariant under left (or right) multiplication. For the virtually abelian
  groups with infinitely many left orders there exist left-invariant orders for
  which comparing two elements is undecidable \cite{AntolinRivasSu}, so using
  left-invariant orders is not a feasible constraint when considering the
  Diophantine Problem.
\end{remark}

Before we can prove the main result of this section, we must show that we can effectively construct intersections of rational subsets of virtually abelian groups. We need the following result of Grunschlag.
\begin{lemma}\cite[Corollary 2.3.8]{Grunschlag_thesis}\label{lem:Grunschlag}
	Let $G$ be a group with finite generating set $S$ and $H$ a finite index subgroup of $G$ with right transversal $T$. For each rational subset $U\subset G$ such that $U\subset Ht$ for some $t\in T$, there exists an effectively constructible rational subset $V\subset H$ such that $U=Vt$.
\end{lemma}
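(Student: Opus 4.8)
The plan is to verify that $V := Ut^{-1}$ does the job: since $U \subset Ht$ we have $V \subset Ht\,t^{-1} = H$ and $Vt = U$ automatically, so the entire content of the lemma is to show that $V$ is an \emph{effectively constructible} rational subset of $H$. Since $U$ is rational in $G$, write $U = \pi(R)$ for an explicit regular language $R \subseteq S^*$. The strategy is then to transform words over $S$ that represent elements of $Ht$ into words over a generating set of $H$ that represent the corresponding elements of $V$, using a rational transducer built from the Schreier coset graph (in the spirit of the transducer $\sigma$ of Lemma \ref{transducerKSS}, but for the general finite-index subgroup $H$).

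Concretely, I would let $\tau\colon G \to T$ send $g$ to the transversal representative of the coset $Hg$, and take as generating set the Schreier generators $Y = \{\, t s\,\tau(ts)^{-1} : t \in T,\ s \in S \,\}\setminus\{1\} \subset H$, which are standard to show generate $H$. I would then build a transducer $\rho \subseteq S^* \times Y^*$ whose states are the finitely many cosets $\{Ht_i : t_i \in T\}$, with initial state $H$ and unique final state $Ht$; for each state $Ht_i$ and each $s \in S$ add an edge to the state $H t_i s$ labelled $(s,\, w_{i,s})$, where $w_{i,s} \in Y \cup \{\varepsilon\}$ is the Schreier generator $t_i s\,\tau(t_i s)^{-1}$. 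A straightforward induction on word length shows that after reading $w = s_1\cdots s_k$ from the initial state the accumulated output represents the ``$H$-part'' $\pi(w)\,\tau(\pi(w))^{-1}$ of $\pi(w)$; requiring the run to finish at the final state $Ht$ forces $\tau(\pi(w)) = t$, so the emitted word represents exactly $\pi(w)t^{-1}$.

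Applying $\rho$ to $R$ then produces a language $R' = \rho(R) \subseteq Y^*$ which is regular because rational transductions preserve rationality (the Berstel proposition recalled above), and which satisfies $\pi_H(R') = \{\pi(w)t^{-1} : w \in R,\ \pi(w)\in Ht\} = Ut^{-1} = V$. Hence $V$ is an effectively constructible rational subset of $H$ and $U = Vt$, as required. The main obstacle I anticipate is effectiveness of the transducer rather than its existence: one must compute the coset-representative function $\tau$ and the Schreier generators $w_{i,s}$ as concrete elements of $H$, which presupposes that coset membership is decidable and that multiplication in $G$ is computable. In the virtually abelian setting this is exactly what the multiplication function $f$ and the finite quotient presentation of Definition \ref{def:VA} supply, so all steps are effective; the only real care needed is keeping the left/right conventions consistent so that the $H$-parts accumulate on the correct side.
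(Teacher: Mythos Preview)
The paper does not give its own proof of this lemma; it is simply quoted from Grunschlag's thesis. Your argument is correct and is precisely the standard Reidemeister--Schreier rewriting process realised as a rational transducer on the Schreier coset graph, which is the construction underlying the cited result and is the non-normal analogue of the transducer in Lemma~\ref{transducerKSS}.

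One small point worth making explicit: your induction base case tacitly assumes that the transversal representative of the trivial coset $H$ is $1$, so that the empty input produces the empty output representing $1 = 1\cdot\tau(1)^{-1}$. This is a harmless normalisation of $T$, but it should be stated. With that in place, the inductive step you sketch indeed telescopes to give output $\pi(w)\,\tau(\pi(w))^{-1}$, and since $U\subset Ht$ every $w\in R$ already terminates at the state $Ht$, so nothing is lost by fixing $Ht$ as the unique accept state.
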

In \cite{CiobanuEvetts}, rational subsets of virtually abelian groups are characterised as certain kinds of semilinear subsets, which implies that their intersections are also rational. The following lemma makes this effective.
\begin{lemma}\label{lem:rationalintersection}
	Let $G$ be a finitely generated virtually abelian group. Given finitely many rational subsets of $G$, their intersection is an effectively constructible semilinear set.
\end{lemma}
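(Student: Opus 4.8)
The plan is to reduce the intersection, which lives in $G$, to a family of intersections inside the free abelian normal subgroup $A\cong\Z^n$, where Theorem \ref{thm:intersectionsemi} applies directly. The key structural observation is that the cosets $At$, as $t$ ranges over the transversal $T$, partition $G$, so every rational subset $U\subseteq G$ splits as the disjoint union $U=\bigcup_{t\in T}(U\cap At)$, and the intersection of finitely many rational subsets respects this splitting. This is exactly the content of the non-effective characterisation of \cite{CiobanuEvetts}, and the task is to make each ingredient algorithmic.

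First I would fix the decomposition. For each of the given rational subsets $U_j$ (with $1\le j\le r$) and each $t\in T$, the coset $At$ is recognisable, being a coset of the finite-index subgroup $A$, so $U_j\cap At$ is an effectively constructible rational subset of $At$. Applying Grunschlag's Lemma (Lemma \ref{lem:Grunschlag}) with $H=A$ yields an effectively constructible rational subset $V_{j,t}\subseteq A$ with $U_j\cap At=V_{j,t}\,t$. Identifying $A$ with $\Z^n$ via the chosen free abelian basis, each $V_{j,t}$ corresponds to a rational, hence semilinear, subset $S_{j,t}\subseteq\Z^n$, and this passage is effective, since rational subsets of $\Z^n$ coincide with (effectively constructible) semilinear ones.

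Next I would perform the intersection coset by coset. Since distinct cosets are disjoint, $\bigl(V_{j,t}\,t\bigr)\cap\bigl(V_{j',t'}\,t'\bigr)=\emptyset$ whenever $t\ne t'$, and because right multiplication by $t$ is injective, $\bigcap_{j}\bigl(V_{j,t}\,t\bigr)=\bigl(\bigcap_j V_{j,t}\bigr)t$. Hence $\bigcap_{j=1}^r U_j=\bigcup_{t\in T}\bigl(\bigcap_{j=1}^r S_{j,t}\bigr)t$, where each inner intersection is taken in $\Z^n$. Applying Theorem \ref{thm:intersectionsemi} iteratively produces an effectively constructible semilinear expression for each $\bigcap_{j=1}^r S_{j,t}\subseteq\Z^n$.

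Finally I would reassemble and verify that the output is genuinely semilinear in $G$. Writing $\bigcap_j S_{j,t}=\bigcup_i a_i C_i^*$ as a finite union of linear sets in $A$, normality of $A$ gives $C_i^*\,t=t\,(t^{-1}C_i t)^*$, with $t^{-1}C_it$ a finite subset of $A$, so $a_iC_i^*\,t=(a_it)(t^{-1}C_it)^*$ is linear in $G$. Thus $\bigl(\bigcap_j S_{j,t}\bigr)t$ is semilinear in $G$, and the whole intersection is a finite union of such pieces, as required. The point needing care is effectivity throughout, in particular that Grunschlag's Lemma yields each $V_{j,t}$ explicitly and that the translation between rational subsets of $A$ and semilinear subsets of $\Z^n$ is algorithmic, rather than any single hard step; the genuinely new content is simply the substitution of the effective intersection result (Theorem \ref{thm:intersectionsemi}) into the known, non-effective characterisation of \cite{CiobanuEvetts}.
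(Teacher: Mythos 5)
Your proof is correct and follows essentially the same route as the paper's: decompose each rational set over the cosets of $A$ using recognisability of the cosets, pull the pieces back into $A$ via Lemma \ref{lem:Grunschlag}, convert rational subsets of $A$ into effectively constructible semilinear ones, use disjointness of cosets to reduce the intersection to intersections inside $A\cong\Z^n$, and apply Theorem \ref{thm:intersectionsemi} there. The only cosmetic difference is that the paper works with left translates $tV_t$, so its pieces are already in the linear form $gB^*$, whereas your right translates $V_{j,t}\,t$ require the final conjugation step $a_iC_i^*\,t=(a_it)(t^{-1}C_it)^*$, which you carry out correctly using normality of $A$.
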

\begin{proof}
	Let $S$ be a finite generated set for $G$ and let $A$ denote a free abelian normal subgroup of $G$ of finite index and $T$ a choice of right transversal. Since $A$ has finite index in $G$, any coset $At$ is \emph{recognisable}. In other words, the full preimage of $At$ in $S^*$ is a regular language (see \cite[Proposition 6.3]{HerbstThomas}).
	
	Suppose that $U$ is a rational subset of $G$ with $R\subset S^*$ a regular language whose image is $U$. Then intersecting $R$ with the full preimage of any coset $At$ gives a new regular language $R'$ whose image is $U\cap tA$. Since intersections of regular languages can be found algorithmically \cite{groups_langs_aut}, each intersection $U\cap At$ is an effectively constructible rational set. Now since $U\cap At\subset At$, Lemma \ref{lem:Grunschlag} implies that we can find a rational subset $V_t\subset A$ such that $U\cap At =V_t t$, for each $t\in T$ (and so $U=\bigcup V_t t$).
	
  In \cite[Section 2]{RationalSets} it is shown that in any commutative monoid,
  rational sets and semilinear sets coincide and, moreover, given a rational
  expression for a set, a semilinear expression can be found. Thus, we can
  effectively express each rational subset $V_t\subset A$ as a semilinear
  subset of $A$, so that 
  \[
    U
    = \bigcup_{t\in
    T} \bigcup_{i=1}^{k_t} (a_{t,i}+B_{t,i}^*)t
  \] 
  for some $a_{t,i}\in A$ and finite $B_{t,i}\subset A$ (where we write the
  group operation in $A$ additively). Now, since intersection distributes over
  union (and in light of Remark \ref{rem:semilinearunion}), the intersection of
  two (and hence finitely many) rational sets $U,U'\subset G$ is an effectively
  constructible semilinear (and hence rational) set as long as the intersection
  of any two linear sets of the form $(a_{t,i}+B_{t,i}^*)t$ is an effectively
  constructible semilinear set. Consider two such sets $(a+B^*)t$ and
  $(c+D^*)s$, for $t,s\in T$, $a,c\in A$, and finite $C,D\subset A$. We have
	\begin{align*}
		(a+B^*)t \cap (c+D^*)s=\begin{cases} \emptyset & t\neq s \\ ((a+B^*)t\cap (c+D^*)) & t=s \end{cases}.
	\end{align*}
	By Theorem \ref{thm:intersectionsemi}, the intersection $(a+B^*)\cap (c+D^*)$ is semilinear and effectively constructible and hence so is its translate by $t$, finishing the proof.
\end{proof}

We now show that lexicographic order constraints are equivalent to a collection
of rational constraints. Note that two lexicographically
comparable words in the subgroup transversal normal form share a common prefix.

\begin{theorem}
	\label{thm:lexicographic}
	Let \(G\) be a finitely generated virtually abelian group. Fix a finite-index
	free abelian subgroup \(A\), a free abelian basis \(B=\{a_1,  \ldots, a_n\}\) with $a_1< a_1^{-1} \ldots < a_n < a_n^{-1}$ for \(A\), and a
	(finite) right transversal \(T\) also equipped with an order \(\leq\).

	Then verifying a lexicographic order constraint \(X
	\leq_\textnormal{lex} Y\) is equivalent to checking membership of each of \(X\), \(Y\) and \(X^{-1}Y\)
	in a certain effectively constructible rational set.
\end{theorem}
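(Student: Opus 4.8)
The plan is to reduce the comparison \(X \leq_\textnormal{lex} Y\) to a finite disjunction of conditions, each of which is a conjunction of polyhedral (equivalently semilinear, equivalently rational) conditions imposed \emph{separately} on the abelian coordinates of \(X\), of \(Y\), and of \(X^{-1}Y\). Writing each \(g\in G\) in its normal form \(\eta(g) = a_1^{x_1}\cdots a_n^{x_n} w_t\), I identify \(g\) with the pair consisting of its coordinate vector \((x_1,\ldots,x_n) \in \Z^n\) and its transversal element \(t \in T\). Since \(A\) has finite index, each coset \(At\) is recognisable, and by the characterisation of rational subsets of virtually abelian groups (as in \cite{CiobanuEvetts} and Lemma \ref{lem:rationalintersection}) a rational constraint on \(g\) amounts, coset by coset, to a semilinear --- equivalently polyhedral, by Proposition \ref{prop:polysemi} and Corollary \ref{cor:polytosemi} --- condition on the coordinate vector. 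The observation that makes \(X^{-1}Y\) useful is that, once the cosets \(t_X, t_Y\) are fixed, the coordinate vector of \(X^{-1}Y\) is an invertible affine function \(M_{t_X}(y-x) + c_{t_X,t_Y}\) of the difference \(y - x\) of coordinate vectors (here \(M_{t_X} \in \mathrm{GL}_n(\Z)\) is the matrix of conjugation by \(t_X^{-1}\), and \(c_{t_X,t_Y}\) records the normal form of \(t_X^{-1}t_Y\)). Hence any linear condition on \(y-x\) --- in particular ``\(x_i = y_i\) for \(i < j\)'' or ``\(y_j - x_j > 0\)'' --- transforms into a polyhedral, hence rational, condition on \(X^{-1}Y\).

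First I would carry out the lexicographic comparison explicitly. Reading \(\eta(X)\) and \(\eta(Y)\) letter by letter, either their abelian words agree and the order is decided by the fixed order on \(T\) (a finite condition on the cosets, with equality \(X=Y\) absorbed into the reflexive case), or there is a first coordinate \(j\) with \(x_j \neq y_j\) and the order is settled within the \(j\)-th block. I would organise this into a finite case split over the pair \((t_X, t_Y) \in T \times T\), the index \(j \in \{1, \ldots, n\}\), and the signs of \(x_j\) and \(y_j\); using the block ordering \(a_j < a_j^{-1}\), each case prescribes whether \(X\) or \(Y\) is smaller. Each case is then a conjunction of three kinds of atomic conditions: sign and support conditions on the coordinates of \(X\) alone (e.g.\ \(x_j > 0\), or the ``empty-tail'' condition that \(x_i = 0\) for all \(i > j\) together with \(X \in A\)); the analogous conditions on \(Y\) alone; and the difference conditions \(x_i = y_i\) for \(i < j\) and the comparison of \(x_j\) with \(y_j\), which by the previous paragraph are conditions on \(X^{-1}Y\).

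Next I would translate each atomic condition into an effectively constructible rational set. Every such condition is a finite system of linear equalities, inequalities, and congruences (the latter pinning down cosets) on a coordinate vector, hence defines a polyhedral set; by Corollary \ref{cor:polytosemi} it is effectively semilinear, and semilinear subsets of cosets assemble into effectively constructible rational subsets of \(G\). Intersecting and unioning these finitely many pieces --- which is effective by Remark \ref{rem:semilinearunion} and Lemma \ref{lem:rationalintersection} --- expresses \(\{(X,Y) : X \leq_\textnormal{lex} Y\}\) as a finite disjunction of conjunctions of the form \(X \in R_1 \wedge Y \in R_2 \wedge X^{-1}Y \in R_3\) with \(R_1, R_2, R_3\) effectively constructible rational sets. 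This is precisely the claimed reduction: verifying \(X \leq_\textnormal{lex} Y\) becomes checking membership of \(X\), \(Y\) and \(X^{-1}Y\) in (a boolean combination of) rational sets, which the equation-solving machinery with rational constraints then absorbs.

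The main obstacle I anticipate is the honest bookkeeping of the lexicographic comparison, specifically the prefix cases, where one \(j\)-block is an initial segment of the other (e.g.\ \(0 < x_j < y_j\)). There the outcome depends on whether the shorter word continues, i.e.\ on whether \(X\) (or \(Y\)) has any further nonzero coordinate or a nontrivial transversal tail; getting these ``empty-tail'' conditions right, and keeping track of the asymmetry forced by \(a_j < a_j^{-1}\) (so that large positive exponents and large negative exponents sort in opposite directions), is where care is required. Once the case analysis is pinned down, the translation to effectively constructible rational sets is routine given the earlier sections.
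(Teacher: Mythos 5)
Your proposal is correct and follows essentially the same route as the paper: a finite case split over the transversal pair and the first differing coordinate, with prefix-agreement and the comparison at that coordinate encoded as conditions on \(X^{-1}Y\) via the conjugation automorphism (your invertible affine map \(M_{t_X}(y-x)+c_{t_X,t_Y}\) is exactly the paper's \(\phi_s^{-1}\) device), the transversal comparison handled by coset membership of \(X\) and \(Y\), and everything assembled through the effective semilinear/rational machinery of Corollary \ref{cor:polytosemi} and Lemma \ref{lem:rationalintersection}. The only notable difference is that your case analysis parses the string-lexicographic order more finely (signs of \(x_j,y_j\) and the prefix/empty-tail cases) than the paper's proof, which compares exponent tuples integer-lexicographically; this extra care still reduces to the same kind of polyhedral conditions on \(X\), \(Y\) and \(X^{-1}Y\), so the conclusion is unaffected.
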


\begin{proof}
	We write all group elements in (subgroup transversal) normal form \(a_1^{i_1}
	\cdots a_n^{i_n} s\), where $a_i \in B$ and $s \in T$, and note that
	\(a_1^{i_1} \cdots a_n^{i_n} s \leq_\text{lex} a_1^{j_1} \cdots a_n^{j_n} t\) implies one
	of: (1) \(i_1 = j_1, \ \ldots, \ i_n = j_n\) and \(s \leq t\), or (2) there
	exists \(p \in \{1, \ldots, n\}\) such that \(i_1 = j_1, \ldots, i_{p - 1} =
	j_{p - 1}\) and \(i_p < j_p\).

	Case 1: there exists \(p \in \{1, \ldots, n\}\) such that \(i_1 = j_1, \ldots,
	i_{p - 1} = j_{p - 1}\) and \(i_p < j_p\).\\
	This case is a finite disjunction across \(p \in \{1, \ldots n\}\) of the case
	where for this fixed \(p\), \(i_1 = j_1, \ldots, i_{p - 1} = j_{p - 1}\) and
	\(i_p < j_p\). Thus it suffices to show each of these cases reduces to
	checking membership in effectively constructible rational sets. So fix such a \(p \in \{1,
	\ldots, n\}\).

  We first show that checking \(i_q = j_q\) for all \(q < p\) can be done by
  checking membership of \((a_1^{i_1} \cdots a_n^{i_n} s)^{-1} (a_1^{j_1}
  \cdots a_n^{j_n} t)\) in a rational set. Let \(\phi_s, \phi_t \in \Aut(A)\)
  be the automorphisms induced by conjugation by \(s\) and \(t\), respectively.
  We will show that verifying that \(i_q = j_q\) for all \(q < p\) is equivalent to checking
  membership in the set \(\phi^{-1}_s(\pi((a_p^\pm)^\ast \cdots (a_n^\pm)^\ast)
  )) \pi(s^{-1} t)\).  If \(i_q = j_q\) for all \(q < p\), then
	\begin{align*}
		(a_1^{i_1} \cdots a_n^{i_n} s)^{-1} (a_1^{j_1} \cdots a_n^{j_n}
		t) & = s^{-1} a_1^{j_1 - i_1} \cdots a_n^{j_n - i_n} t \\
		& = s^{-1} a_p^{j_p - i_p} \cdots a_n^{j_n - i_n} t \\
		& = \phi_s^{-1} (a_p^{j_p - i_p} \cdots a_n^{j_n - i_n})  s^{-1} t
		\in \phi^{-1}_s(\pi((a_p^\pm)^\ast \cdots
    (a_n^\pm)^\ast) ))\pi(s^{-1} t).
	\end{align*}
  Conversely, suppose \((a_1^{i_1} \cdots a_n^{i_n} s)^{-1} (a_1^{j_1} \cdots
  a_n^{j_n} t) \in \phi^{-1}_s(\pi((a_p^\pm)^\ast \cdots
  (a_n^\pm)^\ast) ))\pi(s^{-1} t)\). Then
	\begin{align*}
		(a_1^{i_1} \cdots a_n^{i_n} s)^{-1} (a_1^{j_1} \cdots
		a_n^{j_n} t) & = s^{-1} a_1^{j_1 - i_1} \cdots a_n^{j_n - i_n} t \\
		& =  \phi_s^{-1} (a_1^{j_1 - i_1} \cdots a_n^{j_n - i_n}) s^{-1} t
	\end{align*}
	This lies in \(\phi^{-1}_s(\pi((a_p^\pm)^\ast \cdots
  (a_n^\pm)^\ast) ))\pi(s^{-1} t)\) if and only if \(j_1
	- i_1 = \cdots = j_{p - 1} - i_{p - 1} = 0\), as required.

  If \(x, y \in G\) are arbitrary elements, then \(x\) and \(y\) are of the
  above form; that is \(x = a_1^{i_1} \cdots a_n^{i_n} s\) and \(y = a_1^{j_1}
  \cdots a_n^{j_n} t\) for some \(i_1, \ldots, i_n, j_1, \ldots, j_n \in
  \mathbb{Z}\) and \(s, t \in T\) with \(i_q = j_q\) for all \(q < p\). We have
  that \(i_q = j_q\) for all \(q < p\) if and only if \(xy^{-1}\) lies in the
  set \(\phi^{-1}_s(\pi((a_p^\pm)^\ast \cdots (a_n^\pm)^\ast) ))\pi(s^{-1}
  t)\). However, as this set's definition depends on \(s\) and \(t\), this set
  depends on \(x\) and \(y\). So what we must do is union across all
  possibilities of \(s^{-1} t\), which will be \(T^{-1}T\).  Thus \(x\) and
  \(y\) satisfy the property that \(i_q = j_q\) for all \(q < p\) if and only
  if \(x y^{-1} \in \phi^{-1}_s(\pi((a_p^\pm)^\ast \cdots (a_n^\pm)^\ast) ))
  T^{-1}T\).

  Note that \(\phi^{-1}_s(\pi((a_p^\pm)^\ast \cdots (a_n^\pm)^\ast) ))
  T^{-1}T\) is rational as the image of a rational set under an automorphism is
  effectively rational, and the concatenation of rational sets is rational (see Section \ref{sec:FL}).

	It remains to show that \(i_p < j_p\) can be determined by checking membership
	in an effectively constructible rational set. Let \(k\) be the exponent of \(a_p\) within
  \(\phi_s(s^{-1} t)\). We will show that \(i_p < j_p\) if
	and only if \((a_1^{i_1} \cdots a_n^{i_n} s)^{-1} (a_1^{j_1} \cdots a_n^{j_n}
	t) \in \phi_s^{-1}(\pi(a_1^\pm)^\ast \cdots (a_{p - 1}^\pm)^\ast (a_p^{k + 1} a_p^\ast)
	(a_{p + 1}^\pm)^\ast \cdots (a_n^\pm)^\ast) T \). We have
	\begin{align*}
		(a_1^{i_1} \cdots a_n^{i_n} s)^{-1} (a_1^{j_1} \cdots a_n^{j_n}
		t) & = \phi_s^{-1} (a_1^{j_1 - i_1} \cdots a_n^{j_n - i_n})
    s^{-1} t \\
		& = \phi_s^{-1} (a_1^{j_1 - i_1} \cdots a_n^{j_n - i_n}
    \phi_s (s^{-1} t) ).
	\end{align*}
  Thus the exponent of \(a_p\) in the expression for \(a_1^{j_1 - i_1} \cdots
  a_n^{j_n - i_n} \phi_s (s^{-1} t)\) will equal \(j_p - i_p + k\), and so
  \(i_p < j_p\) if and only if this exponent is strictly greater than \(k\) and
  so checking \(i_p < j_p\) is equivalent to checking membership in
  \(\phi_s^{-1}(\pi(a_1^\pm)^\ast \cdots (a_{p - 1}^\pm)^\ast (a_p^{k + 1}
  a_p^\ast) (a_{p + 1}^\pm)^\ast \cdots (a_n^\pm)^\ast )T \), which is rational
  as the automorphic image of a rational set. We have thus shown that \(x
  \leq_\text{lex} y\) and we are in this case (Case 1) if and only if \(xy^{-1}
  \in \phi^{-1}_s(\pi((a_p^\pm)^\ast \cdots (a_n^\pm)^\ast) ))
  T^{-1}T\) and \(xy^{-1} \in \phi_s^{-1}(\pi(a_1^\pm)^\ast \cdots (a_{p -
  1}^\pm)^\ast (a_p^{k + 1} a_p^\ast) (a_{p + 1}^\pm)^\ast \cdots
  (a_n^\pm)^\ast) T\), and so checking we lie in this case and that \(x
  \leq_\text{lex} y\) is equivalent to verifiying membership of \(x y^{-1}\)
  in a finite intersection of effectively constructible rational sets. By Lemma \ref{lem:rationalintersection}, this is equivalent to verifying membership in a single effectively constructible rational set.

	Case 2: \(i_1 = j_1, \ldots, i_n = j_n\) and \(s \leq_\text{lex} t\). \\
  We can use the argument from the first part of Case 1 to check that \(i_1 =
  j_1, \ldots, i_n = j_n\), by just taking \(q\) to be \(n + 1\).

  Note that for each \(r \in T\), the coset \(Ar\) is rational and explicitly
  computable: it is the projection of the language \((a_1^\pm)^\ast \cdots
  (a_n^\pm)^\ast r\). As \(T\) is finite, checking \(g \leq_\text{lex} h\) in
  this case reduces to checking if \(g \in Ar\) and \(h \in Ar'\) for any (of
  the finitely many) \((r, \ r') \in T \times T\), with \(r \leq_\text{lex}
  r'\).
\end{proof}

Since one element is shortlex smaller than another if and only if it is length
smaller, or length equal and lexicographically smaller, shortlex constraints
reduce to checking length constraints and lexicographic constraints, both of
which are equivalent to checking effectively constructible rational sets.

\begin{corollary}
	Let \(G\) be a finitely generated virtually abelian group. Fix a finite-index
	free abelian subgroup \(A\), a free abelian basis \(B\) for \(A\) and a
	(finite) right transversal \(T\). Then verifying any shortlex order constraint \(X
	\leq_\text{shortlex}Y\) is equivalent to checking membership of each of \(X\), \(Y\) and \(X^{-1}Y\)
	in a certain effectively constructible rational set.
\end{corollary}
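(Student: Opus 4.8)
The plan is to unfold the definition of the shortlex order and reduce it to the two constraint types already handled in this paper. For group elements written in subgroup-transversal normal form, $x \le_\text{shortlex} y$ holds exactly when either $|x| < |y|$, or $|x| = |y|$ and $x \le_\text{lex} y$. I would therefore rewrite the shortlex constraint as the union of the condition $|x|<|y|$ with the conjunction of $|x|=|y|$ and $x \le_\text{lex} y$, and then show both that each atomic condition is an effectively constructible rational constraint and that the Boolean combination remains one.

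For the length comparisons, note that $|x|<|y|$ and $|x|=|y|$ are length constraints in the sense of Definition \ref{def:length_constraint}: taking $k=2$ and $(\alpha_1,\alpha_2)=(1,-1)$, they are the constraints $\alpha_1|x|+\alpha_2|y|\le -1$ and $\alpha_1|x|+\alpha_2|y|=0$ respectively. Hence Theorem \ref{thm:lengthrational} expresses each as an effectively constructible rational subset of $G^2$. For the lexicographic comparison, Theorem \ref{thm:lexicographic} shows that $x\le_\text{lex}y$ is equivalent to checking membership of $X$, $Y$ and $X^{-1}Y$ in effectively constructible rational sets, which we may likewise regard as a rational constraint on the pair $(X,Y)$.

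It then remains to recombine the pieces. All of the above conditions are rational constraints on $(X,Y)$ in the group $G\times G$, which is again finitely generated virtually abelian, so the effective Boolean closure developed earlier applies: finite intersections of rational subsets are effectively constructible by Lemma \ref{lem:rationalintersection}, and finite unions by the closure properties recorded in Section \ref{sec:FL}. Forming the union of the set defined by $|x|<|y|$ with the intersection of the sets defined by $|x|=|y|$ and by $x\le_\text{lex}y$ therefore produces a single effectively constructible rational constraint on $(X,Y)$, as required.

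The point that needs the most care is that the length comparison is genuinely a relational condition on the pair $(x,y)$: unlike the lexicographic test, it cannot be read off from $X^{-1}Y$ alone, since two pairs sharing the same value of $x^{-1}y$ (and lying in the same pair of cosets) may still have different length relationships. Thus the argument must be conducted in $G\times G$ and rely on the effective intersection of Lemma \ref{lem:rationalintersection} there, rather than being reduced to a single group element. One must also check that the length used to break ties in the shortlex order is a weighted word length governed by Theorem \ref{thm:lengthrational}, or else extract the required rational sets directly from the semilinear description of geodesics in Section \ref{sec:geodesics}, where the relevant length is an affine function of the recorded coordinates. Once these points are settled, the remaining verification is routine.
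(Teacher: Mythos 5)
Your proposal is correct and follows essentially the same route as the paper's proof: decompose the shortlex condition as \(|X|<|Y|\) or (\(|X|=|Y|\) and \(X\leq_\text{lex}Y\)), handle the length comparisons via Theorem~\ref{thm:lengthrational} (with every generator weighted \(1\)), handle the lexicographic comparison via Theorem~\ref{thm:lexicographic}, and resolve the Boolean combination using Lemma~\ref{lem:rationalintersection}. Your closing observation that the length comparison is genuinely a constraint on the pair \((X,Y)\) --- a rational subset of \(G\times G\) rather than of \(G\), so it cannot be read off \(X^{-1}Y\) alone --- is a point the paper's proof passes over silently, and your handling of it by working in the virtually abelian group \(G\times G\) is a sound way to make that step precise.
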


\begin{proof}
  We have that \(X\) is shortlex less than or equal to \(Y\) if and only if
  \(|X| < |Y|\), or \(|X| = |Y|\) and \(X \leq_\text{lex} Y\). We can check
  both \(|X| < |Y|\) and \(|X| = |Y|\) using Theorem~\ref{thm:lengthrational}
  (with each element of the generating set \(B\cup T\) weighted with a $1$),
  and we can check if \(X \leq_\text{lex} Y\) using
  Theorem~\ref{thm:lexicographic}. We can put together two conditions, such as
  \(|X| = |Y|\) and \(X \leq_\text{lex} Y\), by checking membership in all the
  rational sets corresponding to these constraints produced effectively in
  Theorems~\ref{thm:lengthrational} and \ref{thm:lexicographic}, and using
  Lemma~\ref{lem:rationalintersection} to resolve any intersections.
\end{proof}
\section*{Acknowledgements}
The first named author acknowledges a Scientific Exchanges grant (number IZSEZ0 213937) of the Swiss National Science Foundation. The second and third named authors thank the Heilbronn Institute for Mathematical Research for support during this work.

The first named author would like to thank Markus Lohrey and Georg Zetzsche for helpful discussions and references concerning Section \ref{sec:CF}.

\bibliography{references}{}
\bibliographystyle{plain}

\end{document}